\def\be{\boldsymbol{e}}
\def\bh{\boldsymbol{h}}
\def\b0{\boldsymbol{0}}
\newcommand{\R}     {\mathbb{R}} 
\newcommand{\Z}     {\mathbb{Z}} 
\newcommand{\N}     {\mathbb{N}}
\newcommand{\E}     {\mathbb{E}} 
\newcommand{\T}     {\mathbb{T}}
\newcommand{\Acal}   {{\mathcal A }}
\newcommand{\Bcal}   {{\mathcal B }}
\newcommand{\Ccal}   {{\mathcal C }} 
\newcommand{\Ecal}   {{\mathcal E }} 
\newcommand{\Gcal}   {{\mathcal G }} 
\newcommand{\Hcal}   {{\mathcal H }}
\newcommand{\Mcal}   {{\mathcal M }}
\newcommand{\Pcal}   {{\mathcal P }}
\newcommand{\Scal}   {{\mathcal S }}
\newcommand{\Vcal}   {{\mathcal V }} 
\newcommand{\Wcal}   {{\mathcal W }}
\newcommand{\Exp}{\mathscr{E}\kern-0.2mm{\operatorname{xp}}}
\newcommand{\Log}{\mathscr{L}\kern-0.2mm{\operatorname{og}}}
\def\1{{\mathchoice {1\mskip-4mu\mathrm l}      % Blackboard bold 1 
{1\mskip-4mu\mathrm l} 
{1\mskip-4.5mu\mathrm l} {1\mskip-5mu\mathrm l}}}
\numberwithin{equation}{section}
\numberwithin{figure}{section}
\newtheoremstyle{plain}
  {6pt}%   Space above
  {4pt}%   Space below
  {\slshape}%  Body font
  {}%          Indent amount (empty = no indent, \parindent = para indent)
  {\bfseries}% Thm head font
  {.}%         Punctuation after thm head
  {0.5em}%     Space after thm head: " " = normal interword space;
\newtheorem{thm}{\protect\theoremname}
  \newtheorem{defn}[thm]{\protect\definitionname}
  \newtheorem{prop}[thm]{\protect\propositionname}
  \newtheorem{rem}[thm]{\protect\remarkname}
  \newtheorem{lem}[thm]{\protect\lemmaname}
  \numberwithin{thm}{section}
  \providecommand{\corollaryname}{Corollary}
  \providecommand{\definitionname}{Definition}
  \providecommand{\factname}{Fact}
  \providecommand{\propositionname}{Proposition}
  \providecommand{\remarkname}{Remark}
\providecommand{\theoremname}{Theorem}
\providecommand{\lemmaname}{Lemma}
\title{Site monotonicity and uniform positivity for interacting random walks 
and the spin $O(N)$ model with arbitrary $N$}
\author{Benjamin Lees\footnote{lees@mathematik.tu-darmstadt.de} \and Lorenzo Taggi\footnote{lorenzo.taggi@gmail.com}}
\date{}
\begin{document}

\maketitle

\begin{abstract}
We provide a  uniformly-positive point-wise lower bound for the two-point  function of the classical spin $O(N)$ model on the torus of $\mathbb{Z}^d$, $d \geq 3$, when $N \in \mathbb{N}_{>0}$  and the  inverse temperature $\beta$ is large enough.
This is a new result when $N>2$ and  extends the classical result of Fr\"ohlich, Simon and Spencer (1976).
Our bound follows from a new site-monotonicity property of the two-point function which is of independent interest and holds not only for the spin $O(N)$ model with arbitrary $N \in \mathbb{N}_{>0}$, but for a wide class of systems of interacting random walks and loops, including the loop $O(N)$ model,  random lattice permutations,  the dimer model, the double-dimer model, and the loop representation of the classical spin $O(N)$ model.
\end{abstract}

\section{Introduction}
We consider a system of interacting random loops and walks that reduces to several paradigmatic models in statistical mechanics for specific choices of the parameters, such as the \textit{loop $O(N)$ model},  \textit{random lattice-permutations}, the \textit{double-dimer model},
the \textit{dimer model},
and a representation of the \textit{classical spin $O(N)$ model}.
The spin $O(N)$ model is the most well known of these models, it involves the vertices of a graph carrying (classical) spins in $\mathbb{S}^{N-1}\subset\R^{N}$ that interact via their inner-product. The case $N=1$ is the \textit{Ising model}, the case $N=2$ is the XY or \textit{rotator model}, and the case $N=3$ is the \textit{classical Heisenberg model} (see \cite{FriedliVelenik} for an overview).
The loop $O(N)$ model is related to the spin $O(N)$ model and, in two dimensions, it is conjectured to converge to SLE in an appropriate sense under the correct scaling and choice of parameters. It exhibits a very rich phase diagram, (see 
\cite{PeledSpinka} for an overview).
The study of random lattice-permutations is motivated by its connections to the quantum Bose gas \cite{Feynman}. In particular, the occurrence of infinite cycles in such permutations is related to the occurrence of Bose-Einstein condensation \cite{Ueltschi}.
 %whose rigorous proof is an important open question.
The dimer model goes back to the work of Kasteleyn \cite{Kasteleyn} and Temperley-Fisher \cite{Temperley} and is closely connected to the study of perfect matchings of a graph. It is the subject of an extensive physical and mathematical literature, we refer the reader to \cite{Kenyon2} for a relatively recent discussion.

\paragraph{Uniform positivity.}
Consider the spin $O(N)$ model.
In the famous work of Fr\"ohlich, Simon and Spencer \cite{FrohlichSimonSpencer} it was shown that, for the torus $\T_L=\Z^d/L\Z^d$ with $d\geq 3$ and inverse temperature $\beta$ large enough, spin correlations do not decay with the distance between the sites. This established the occurrence of a phase transition. % in the model.
% and  was achieved through the method of \emph{infra-red bounds} which allowed them to show that the Ces\`aro sum of correlations is bounded away from 0 uniformly as $L\to\infty$.
More precisely, when $d \geq 3$, there exists a finite $\beta_0=\beta_0(d,N) < \infty$ such that,
\begin{equation}\label{eq:classicalstatement}
\liminf_{ L \rightarrow \infty} \quad \frac{1}{|\T_{2L}|}\sum\limits_{z \in \T_{2L}} \langle \varphi_o \cdot \varphi_z \rangle_{2L, N, \beta} \geq 1 - \frac{\beta_0}{\beta},
\end{equation}
(see Theorem \ref{theo:classical} for a precise formulation, this particular formulation first appears in \cite{DLS}).
The result does not imply that 
the two-point correlation,
$\langle \varphi_o \cdot \varphi_z \rangle_{2L, N, \beta}$,
is bounded away from zero uniformly in $L$ and in the choice of the site $z$. This has been proved for N=1 using Peierls' argument \cite{Peierls} and N=2 using the Messager, Miracle-Sole inequality \cite{MMS} and it is expected to be true for any $N\in\N_{>0}$.
Our first main result, Theorem \ref{theo:pointwise}, 
% is a rigorous derivation of such a \textit{point-wise uniform positivity property} of the two-point function for the  spin $O(N)$ model with 
rigorously establishes this for arbitrary $N \in \mathbb{N}_{>0}$ when $\beta$ is large and $d \geq 3$. More precisely, take $\T_L=\Z^d/L\Z^d$ with $d \geq 3$ and nearest-neighbour edges and an arbitrary $N \in \mathbb{N}_{>0}$. We prove that there is a $\beta_0=\beta_0(d,N)<\infty$ such that if $\beta>\beta_0$ then there exists a  constant $C = C(\beta) \in (0, 1]$ such that,
\begin{equation}\label{eq:liminfpositive}
\forall z \in \mathbb{Z}^{d}
\quad 
\liminf\limits_{L \rightarrow \infty } \,  
 \langle  \varphi_o \cdot  \varphi_z \rangle_{2L, N, \beta} > C.
\end{equation}
%Note that we have $\liminf$ rather than $\lim$. 
%The $\liminf$ is present as the limit has been proved to exist only in specific cases when $N=1,2$ \cite{Griffiths,Ginibre}.
%The proof uses a connection between a loop soup model and the $O(N)$ spin model found by Symanzik and explored by several authors,  \cite{Symanzik, Brydges1}. 

\paragraph{Site-monotonicity.}
Our  main result is a consequence of a new  \textit{site monotonicity property}, Theorem \ref{theo:monotonicity}, which holds for a general soup of interacting random loops and walks that we call the \textit{random path model} (RPM) and which includes all the models mentioned above. It extends the site-monotonicity property of Messager and Miracle-Sole \cite{MMS}, which applied to the spin $O(N)$ model with $N=1,2$.

The RPM can be informally defined as follows.
A realisation of the RPM is an ensemble of an arbitrary collection of open and closed nearest-neighbour paths, which we will refer to as \textit{walks} and \textit{loops}, respectively
(see Figure \ref{Fig:soupexample} for an example of such an ensemble).
\begin{figure}
\includegraphics[scale=0.26]{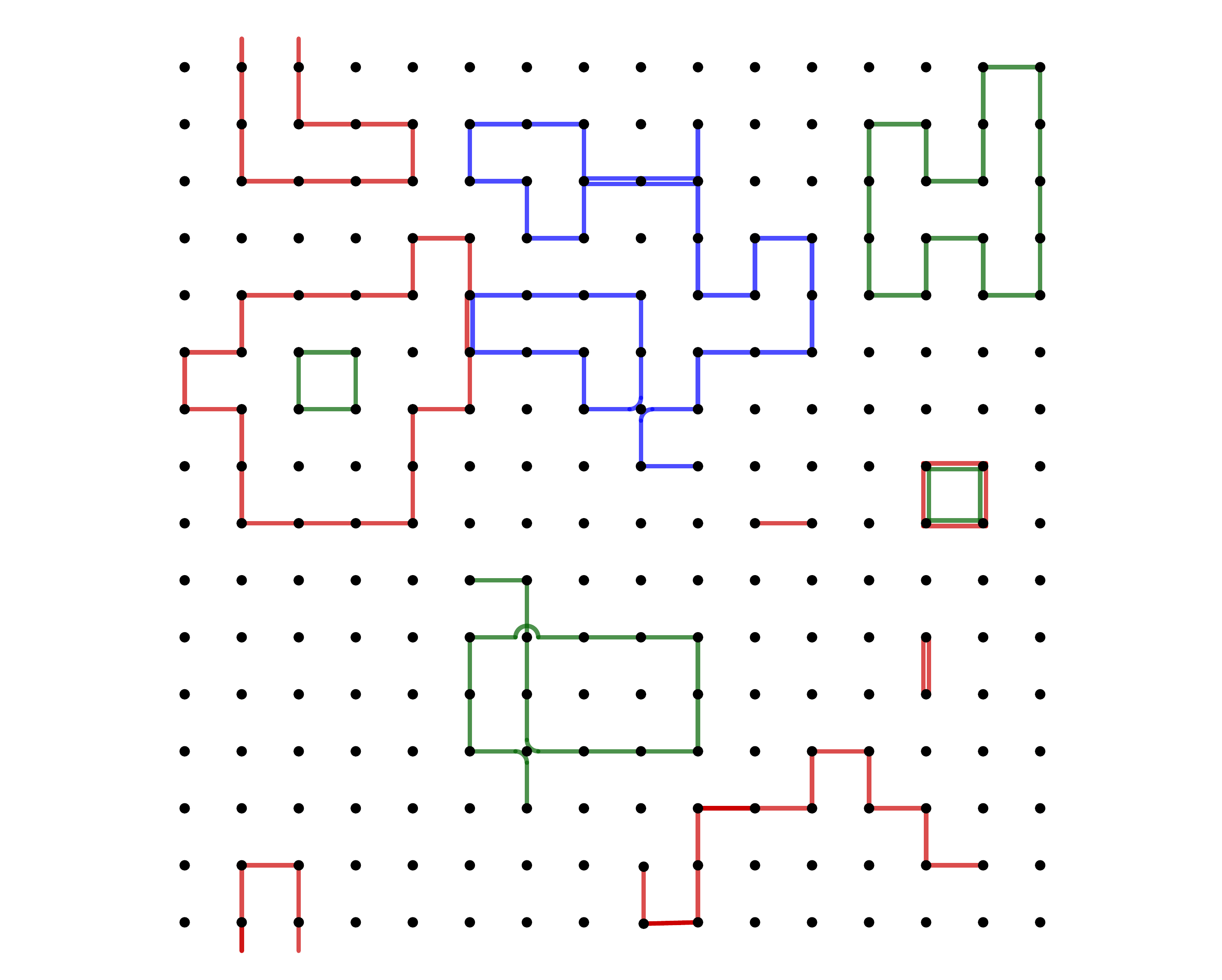}
\centering
\caption{A realisation of the random path model. Edges that are crossed multiple times are represented by narrowly separated parallel lines.
}
\label{Fig:soupexample}
\end{figure}
For $N \in \mathbb{N}_{>0}$ a `colour' in $\{1,\dots,N\}$ is assigned to each path. Realisations consisting of the same paths but different colour assignments are distinguished. The weight of a realisation, denoted by $w$, is proportional to 
\begin{equation}\label{eq:informaldefinition}
\beta^{ \footnotesize \mbox{ total length of paths in $w$} }  \, \, \prod_{x \in \T_L}    U_x(w) ,
\end{equation}
where $\beta\geq 0$ and $U_x(w)$ is a non-negative \emph{weight function} depending on how many times a walk or a loop of each colour visits the vertex $x \in \T_L$.
All of the models mentioned above can be obtained from specific choices of $U=(U_x)_{x \in \T_L}$, thus the setting we introduce allows the comparison of all such models in a unified framework.
The central quantity we consider is the \textit{two-point function}, 
$
G_{L, N, \beta, U}(x,y),
$
where $x, y \in \T_L$, $x\neq y$.
Informally, it is the ratio between the weight of realisations with one unique `long' walk connecting $x$ and $y$ and the weight of realisations without any such `long' walks (`short' walks consisting of a single edge
which we call \textit{dimers}, might be present in both terms, $(U_x)_{x \in \T_L}$ allowing). 
The decay or not of $G_ {L, N, \beta, U}(x,y)$ with $|y-x|$ in the limit $L\to\infty$ tells us whether or not the model exhibits long-range order. %and whether it undergoes a phase transition with respect to $\beta$. 
Additionally, for some choices of $U = (U_x)_{x \in \T_L}$, $G_{L, N, \beta, U}(x,y)$ corresponds to the spin-spin correlation of another model (see below). When such a correspondence is available, one might use methods from one model to answer questions about the other.
%which are available in the space of paths to answer open questions involving spins and \textit{vice versa}.
Theorem \ref{theo:monotonicity} states several monotonicity properties of the two-point function
of  all models mentioned above, for any value of the parameters.
It states that the two-point function between the point $o=(0,\dots,0)$ and an arbitrary `odd point' on the torus does not decrease if we project such a point onto an arbitrary coordinate axis and that the two-point function between  $o$ and  `odd' points lying on a cartesian axis is non-increasing with the distance of the point from $o$.
From this we deduce, for example, that the two-point function between two arbitrary sites, $x, y \in \T_L$, that differ by an odd amount in one of the coordinate directions, is bounded from above by the two-point function 
between two neighbouring sites.
In other words, the most convenient thing for the system is that such a long walk interacting with the ensemble of  loops and dimers ends at a neighbour of its starting point, resembling a loop or a dimer itself.

\paragraph{Methodology.}
The essential feature of the weights (\ref{eq:informaldefinition}) (which follows from our general assumptions in Definition \ref{def:measure}) is that they can be expressed as a product of `identical'  `local' functions.
Due to this important property and torus symmetries, the measure can be proved to be \textit{reflection positive for reflections `through edges'}.
Reflection positivity is the key tool that we employ in this paper.
It is a classical tool for the analysis of spin systems and it was also used in \cite{Chayes}
in the context of loop soups.
Using this property we obtain a new inequality involving two-point functions. Such an inequality can be viewed as a new application of reflection positivity and all our results are derived from it.
The inequality states the following.
Consider the torus $\T_L=\Z^d/L\Z^d$ in dimension $d\geq 2$ with $L\in2\N$ and nearest-neighbour edges. Let $\Theta$ be a reflection of sites of $\T_L$ in a plane $R$ bisecting edges and perpendicular to one of the cartesian vectors. $R$ identifies two disjoint sets, $\T_L^+$ and $\T_L^-$ with $\T_L=\T_L^+\cup\T_L^-$, such that $\Theta(\T_L^{\pm}) = \T_L^{\mp}$.
\begin{figure}
\includegraphics[scale=0.22]{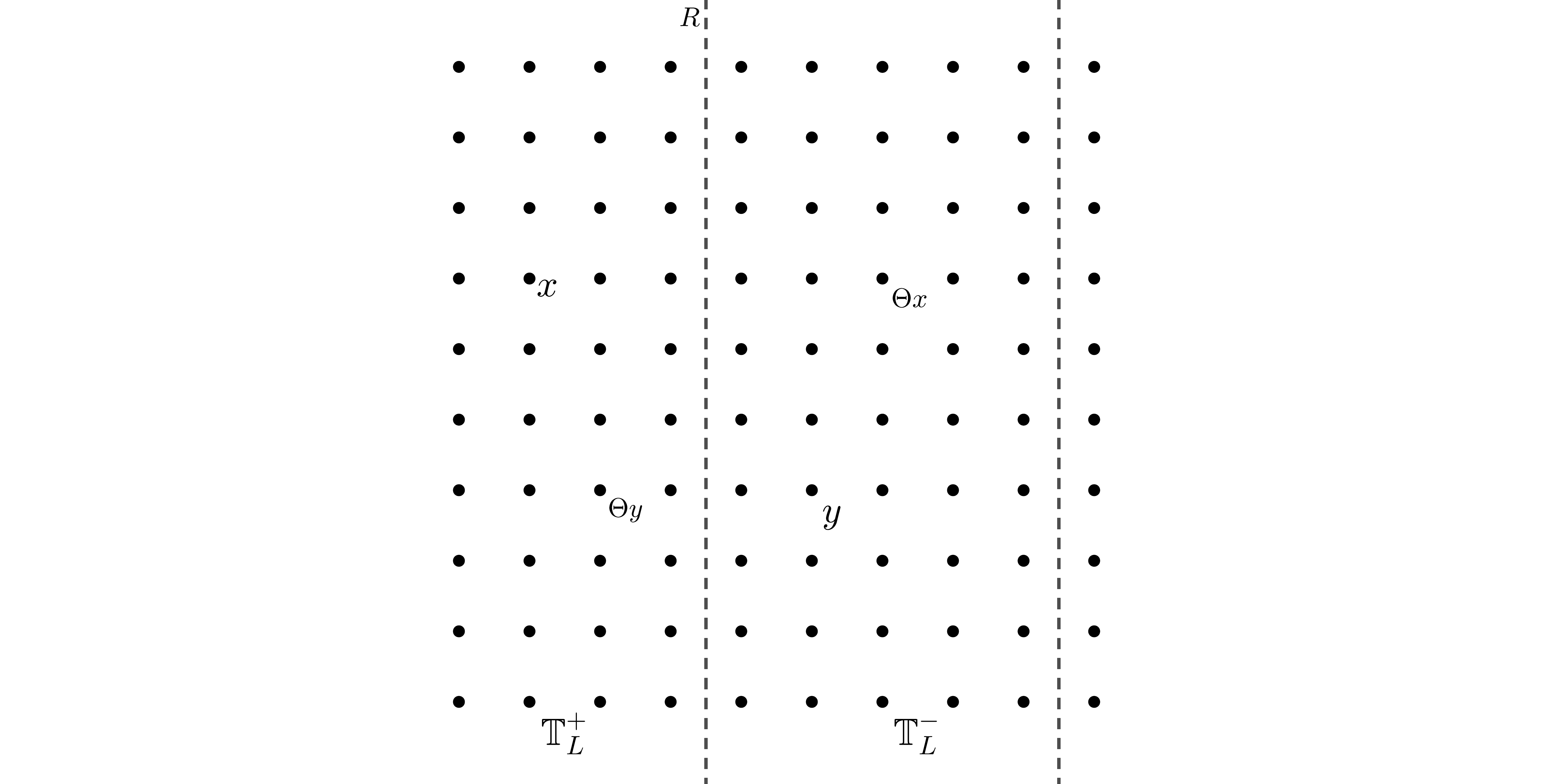}
\centering
\caption{Illustrations of two points $x,y \in \T_L$ and of their reflection.
}
\label{Fig:reflectionsexample}
\end{figure}
Our inequality states that, for any such reflection $\Theta$, $x\in\T_L^+$, $y\in\T_L^-$, and $U = (U_x)_{x \in \T_L}$ satisfying the condition in Definition \ref{def:measure}, we have that
\begin{equation}\label{eq:stat1corrineq}
G_{L, N, \beta, U}(x,y)  \leq \tfrac12  \Big ( \, G_{L, N, \beta, U}\big (x, \Theta(x)\big ) +G_{L, N, \beta, U}\big (\Theta(y), y \big ) \, \Big ).
\end{equation}
Our general  monotonicity properties follow from an iterative application of this inequality, or a generalisation that involves an average over two-point functions,
and an argument by contradiction.
%The previous inequality holds also for the spin O(N) model. Indeed, when $U$  is chosen appropriately, the random path model is a  \textit{representation} of the spin $O(N)$ model.
%This means that, for any value of $N \in \mathbb{N}_{>0}$ and $\beta \geq 0$,
%$
%G_{L, N, \beta, U}(x,y) =  \frac{1}{N} \, \langle \varphi_x \cdot  \varphi_y
%\rangle_{L, N, \beta},
%$
%where  $\langle\, \cdot\, \rangle_{L, N, \beta}$ is the linear functional associated to the spin $O(N)$ model with \textit{inverse temperature}  $\beta \geq 0$ on a torus of side length $L$ in $\mathbb{Z}^d$
%and for any $x \in \T_L$,
%$\varphi_x = (\varphi_x^1, \ldots, \varphi_x^N) \in \mathbb{S}^{N-1} \subset \mathbb{R}^N$ 
%is the \textit{spin at $x$}, and $\cdot$ is the usual inner product in $\mathbb{R}^N$ (see Section \ref{sect:special cases} for a precise definition of the model). 
%This connection to the spin $O(N)$ model was found  by Benassi and Ueltschi in their  recent paper \cite{BenassiUeltschi}, where an alternative formulation of Symanzik's representation \cite{Symanzik, Brydges1} was provided.
%Also the loop $O(N)$ model admits a representation as a spin system, we refer to \cite{Chayes} for details.

Our site monotonicity properties and the bound (\ref{eq:classicalstatement}) lead to the uniformly positive point-wise lower bound for the two-point function of the spin $O(N)$ model. If such a bound was derived for other models that we mentioned above, then our monotonicity result would also imply a corresponding 
%uniformly positive point-wise lower bound for the two-point function
result for these models.

\paragraph{Question.}
Derive an infrared bound for the loop $O(N)$ model, random lattice permutations, or double dimer model
when $d \geq 3$ and $\beta$ is large enough.

We shall end this section by describing the organisation of this paper.
In Section \ref{sect:definition} we present the rigorous definition of the random path model, we show that this model reduces to, or is a representation of, other models when the weight function $U$ is chosen appropriately, and we state our results formally. In Section \ref{sec:RP} we introduce the main technique, reflection positivity.
In Section \ref{sec:MonAndProofs} we use this technique  to derive our main theorems, Theorem \ref{theo:monotonicity}
and Theorem \ref{theo:pointwise}.

\section*{Notation}
\begin{center}
	\begin{tabular}{ l l }
 $N \in \mathbb{N}_{>0}$& the number of colours\\

$\mathcal{G} = ( \mathcal{V}, \mathcal{E})$ &  an undirected, simple, finite graph \\

$e \in \mathcal{E}$ or $\{x,y\} \in \mathcal{E}$ & undirected edges \\

% $(x,y) \in \mathcal{E}$ & edge directed from $x$ to %$y$ \\

$x \sim y$ & two neighbour vertices, i.e, $x, y \in \mathcal{V}$ such that $\{x,y\} \in \mathcal{E}$ \\

$\partial^e D$ & $ \{v \in \mathcal{V} \, \, :  v \not\in D \mbox{ and }  z \sim v \mbox{ for some } z \in D\}$ \\

$\partial^i D$ & $ \{v \in \mathcal{V} \, \, :  v \in D \mbox{ and } \exists z \sim v \mbox{ s.t. } z \not\in D\}$\\
 
$\mathcal{M}_{\mathcal{G}}$ & set of link cardinalities on $\mathcal{G}$ \\

$\mathcal{C}_{\mathcal{G}}(m)$ & the set of colourings for  $m\in\mathcal{M}_{\mathcal{G}}$ \\

$\mathcal{P}_{\mathcal{G}}(m, c)$ & the set of pairing configurations for $m\in\mathcal{M}_{\mathcal{G}}$ and $c\in\mathcal{C}_{\mathcal{G}}(m)$ \\
 
$w= (m,c, \pi)$ &  wire configuration with $m \in\mathcal{M}_{\mathcal{G}}$, $c \in\mathcal{C}_{\mathcal{G}}(m)$, and
$\pi \in \mathcal{P}_{\mathcal{G}}(m, c)$\\ 

$\mathcal{W}_G$ &  the set of wire configurations on $\mathcal{G}$ \\

$n^i_x(w)$ & local occupancy of $i$-links \\

$n_x(w)$ & $ \sum_{i=1}^{N} n_x^i$\\

$u^i_x(w)$ & number of walks of colour $i$ having $x$ as an extremal vertex\\

%$u_x(w)$ &  $\sum_{i=1}^{N} u_x^i$ \\

$v^i_x(w)$ & number of pairings of i-links at $x$\\

$t_x(w)$ & number of links incident to  $x$ which are paired at $x$ to another link touching  $x$\\

%$v_x(w) = \sum_{i=1}^{N} v_x^i$ & number of pairings at $x$\\

$\beta \in \mathbb{R}_{\geq 0}$ & {inverse temperature} \\

$U = (U_x)_{x \in \mathcal{V}}$ &  {weight function} \\

$Z_{\Gcal,N,\beta,U}(x,y) \,  \mbox{or } \, Z_{L}(x,y)$ & {weight of configurations with a 1-path from $x$ to $y$}\\

$Z_{L}(z)$ & $Z_{L}(o,z)${, where }$o${ is the origin of the torus}\\

$G_{\Gcal,N,\beta,U}(x,y) \,  \mbox{or } \, G_{L}(x,y)$ & {the two-point function between $x$ and $y$ in the random path model} \\

$G_{L}(z)$ & $G_{L}(o,z)${, where }$o${ is the origin of the torus}\\

$\langle  \varphi_o \cdot  \varphi_z \rangle_{L, N, \beta}$ & {the two-point correlation between $x$ and $y$ in the spin $O(N)$ model}\\

%$\{ A, B, C \} = \{ A, C, B \}$ & unordered tuples (in this %case a 3-tuple) \\
 
%$(A, B, C) \neq (A, C, B)$ & ordered tuples \\
\end{tabular}

\end{center}

\section{Definitions and results}
\label{sect:definition}
In this section we define the random path model (RPM), we show that, under specific choices of the weight function, this model reduces to other paradigmatic well studied models, for example random lattice permutations, the loop $O(N)$ model,  the double-dimer model, the dimer model, and a representation of the spin $O(N)$ model, and we
state our main results formally.
The  model we introduce is closely related to the one which was introduced in  \cite{BenassiUeltschi}, from which we borrow part of the notation.
Note however that our exposition presents some important differences with respect to  \cite{BenassiUeltschi}. The first difference is that in our framework an arbitrary number of walks are present, which is an essential aspect for obtaining our main theorems.  The second difference is that a colour is assigned to each path and  two realisations consisting of the same paths but  different colour assignments are distinguished. This allows us to introduce a weight function which depends on the colour of the path. Such a generalisation means our model generalises the model of lattice permutations, the double-dimer model and the dimer model, which can be seen by choosing the parameters and the weight function appropriately.

\subsection{Definitions}\label{sect:modeldefinition}
Let $\mathcal{G} = ( \mathcal{V}, \mathcal{E})$ be an undirected, simple, finite graph, and let $N \in \mathbb{N}_{>0}$. We will refer to $N$ as the \textit{number of colours}. 
A  realisation of the random path model can be viewed as a collection of undirected paths (which might be closed or open).
%with no starting point. 
To define a realisation we need to introduce links and pairings. A path is identified by a collection of links, a colouring function and by pairings.
A configuration of links is denoted by 
 $m \in  \mathcal{M}_{\mathcal{G}} := \mathbb{N}^{\mathcal{E}}$. More specifically
$$m = \big ( m_e \big )_{e \in \mathcal{E}},$$
where $m_{e}\in\N$ represents the number of links  on the edge $e$.
No constraint concerning the parity of $m_e$ is introduced.

Given a link configuration $m \in \mathcal{M}_{\mathcal{G}}$, a \textit{colouring} $c \in \mathcal{C}_{\mathcal{G}}(m) := \{1, \ldots, N\}^{m}$ is a realisation which assigns an integer in $\{1, \ldots, N\}$ to each link, which will be called its \textit{colour}.
More precisely, 
$$
c = (c_e)_{e \in \mathcal{E}},
$$
is such that $c_e \in \{1, \ldots, N\}^{m_e}$, where $c_e(p) \in \{1, \ldots, N\}$ is the colour of the $p$-th link which is parallel to the edge $e \in \mathcal{E}$.
See Figure \ref{Fig:pairingexample} for an example.
We will call \textit{i-link} any link which gets colour $i \in \{1, \ldots, N\}$.

Given a link configuration, $m \in \mathcal{M}_{\mathcal{G}}$, and a colouring $c \in \mathcal{C}_{\mathcal{G}}(m)$, a pairing $ \pi = (\pi_x)_{x \in \mathcal{V}}$ for $m$ and $c$ 
pairs links touching $x$ (i.e. links on edges incident to $x$) in such a way that,  if two links are  paired, then they have the same colour. A link touching $x$ can be paired to at most one other link touching $x$, and
it is not necessarily the case that all links touching $x$ are paired to another link at $x$.
Given two links, if there exists a vertex $x$ such that such links are \textit{paired at $x$}, then we say that such links are \textit{paired}. It follows from these definitions that a link can be paired to at most two other links. We remark that by definition a link cannot be paired to itself. We denote by $\mathcal{P}_{\mathcal{G}}(m, c)$ the set of all such pairings for $m\in\mathcal{M}_\Gcal$, $c \in \mathcal{C}_{\mathcal{G}}(m)$. 
%Note that $ \mathcal{P}_{\mathcal{G}}(m, c)$ generally has many elements, corresponding to the number of ways of pairing links.
A wire configuration is an element $w = ( m, c, \pi)$
such that $m \in \mathcal{M}_{\mathcal{G}}$,
$c \in \mathcal{C}_{\mathcal{G}}(m)$,  
$\pi \in \mathcal{P}_{\mathcal{G}}(m, c)$.
We let $\mathcal{W}_{\mathcal{G}}$ be the set of  wire configurations. 
It follows from these definitions that any $w \in \mathcal{W}_{\mathcal{G}}$ can be viewed as a collection of loops and of walks, as in Figure \ref{Fig:soupexample}.

Loops and walks have no starting point and no orientation and they are formally defined as equivalence classes of directed walks.
Given $w = (m, c, \pi) \in \mathcal{W}_{\mathcal{G}}$ and $e \in \mathcal{E}$, let $(e, p)$ be the 
p-th link at $e$, where $p \in \{1, \ldots,  m_e\}$.
A \textit{directed walk} of colour $i$ is  an ordered set of links, $( (e_1, p_1), (e_2, p_2),$ $ \ldots, (e_\ell, p_\ell))$, 
where $e_j \in \mathcal{E}$ and $p_j \in \{1, \ldots, m_{e_j}\}$,
such that $(e_j, p_j)$ is paired to $(e_{j+1}, p_{j+1})$ for each $j\in\{1, \ldots, \ell-1\}$
%(the ones they share with $(e_2,p_2)$ and $(e_{\ell-1},p_{\ell-1})$, respectively), 
and  each link has colour $i$.
Such a sequence is said to be \textit{closed} if  $\ell > 2$ and $(e_\ell, p_\ell)$ is paired to  $(e_1, p_1)$ or if $\ell = 2$ and $(e_\ell, p_\ell)$ and  $(e_1, p_1)$ are paired to each other at both their end-points.
If such a sequence is not   \textit{closed}, then it is considered \textit{open.}
Two directed closed walks are said to be \textit{equivalent} if they are the same colour and it is possible to map one sequence into the other through an inversion and/or a cyclic permutation of the sequence.
A \textit{loop} of colour $i$ is an equivalence class of directed closed walks of colour $i$.
Two directed open walks are said to be equivalent if they are the same colour and if one can map one sequence into the other through an inversion of the sequence.
A \textit{walk} of colour $i$ is an equivalence class of directed open walks of colour $i$.
When referring to loops and walks, we will not always specify their colour. More generally, loops and walks will  be called  \textit{paths}.

\begin{figure}
\includegraphics[scale=0.26]{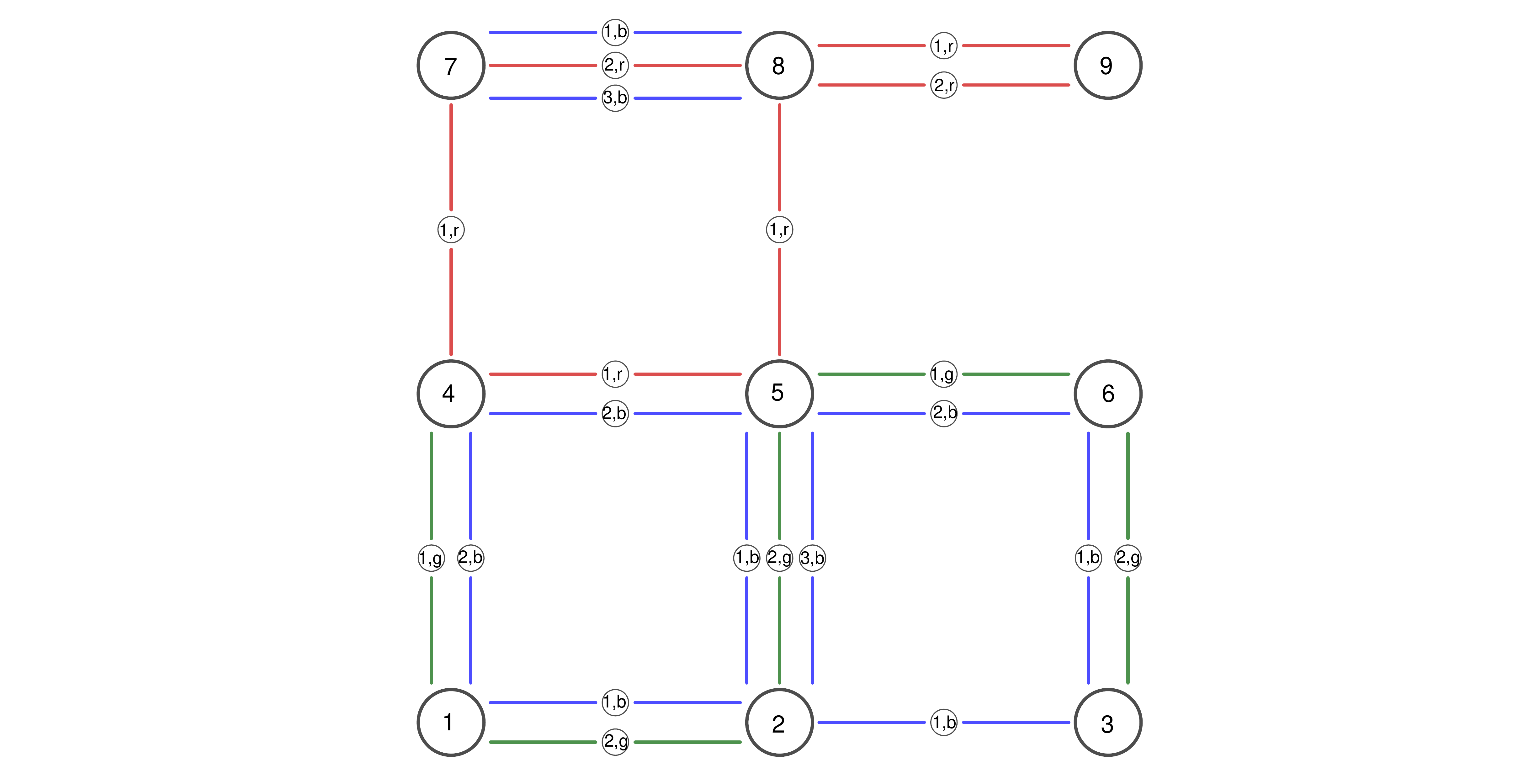}
\centering
\caption{
A pair  $(m, c)$, $m \in \mathcal{M}_{\mathcal{G}}$,
$c \in \mathcal{C}_{\mathcal{G}}(m)$,  on the graph $\{1, 2, 3\} \times \{1,2,3\}$. For example, two links connect the vertex $4$ to the vertex $5$, the \textit{first} link is coloured \textit{red} and the \textit{second} link is coloured blue (here, colours are identified by a letter, $r$, $b$ or $g$).
Pairings, which establish which links are paired at each vertex, are not represented in the figure.
 There are several wiring configurations which are compatible with the pair $(m, c)$ in the figure.
 Since the link cardinalities at some vertices are odd, at least one walk must be present in each such wiring configuration. This figure is analogous to \cite[Figure 1]{BenassiUeltschi}.
}
\label{Fig:pairingexample}
\end{figure}

We let $u^i_x(w)$ be the the number of $i$-links touching $x$ which are not paired to any other link at $x$. In other words, this number corresponds to the number of times a walk starts or ends at $x$.
Let $v^i_x(w)$ be the number of $i$-links touching $x$ which are paired at $x$ to another $i$-link touching $x$, then $v^i_x(w)/2$ corresponds to the number of pairings of $i$-links at $x$.
Set 
\begin{align}\label{eq:numerofihits}
n^i_x(w) & := \frac{v^i_x(w)}{2} \,   +  \,  u^i_x(w).
\end{align}
In other words, $n^i_x$ corresponds to the number of times $x$ is visited by a loop or by a walk of colour $i$.
We define
\begin{equation}\label{eq:numerofhits}
n_x(w)=\sum_{i=1}^Nn^i_x(w),
\end{equation}
to be the total number of times $x$ is visited by a loop or walk. We also write
\begin{equation}
\mathbf{n}_x(w)=(n^1_x(w),\dots,n^N_x(w)), \qquad \mathbf{u}_x(w)=(u^1_x(w),\dots,u^N_x(w)).
\end{equation}
Additionally we define $t_x(w)$ to be the number of links incident to $x$ that are paired at $x$ with another link on the same edge. To obtain, for example, the $O(N)$ loop model we need to restrict to configurations where $t_x(w)=0$ for each $x$. See section \ref{sect:special cases} for details.

\begin{defn}\label{def:measure}
Given $N \in \mathbb{N}_{>0}$, an inverse temperature  $\beta \in \mathbb{R}_{\geq 0}$, and
a \textit{weight function} $ U : \mathbb{N}^{2N+1} \rightarrow \mathbb{R}_{\geq 0}$, 
we define the non-negative measure $\mu_{\Gcal, N, \beta, U}$ by
\begin{equation}\label{eq:weight3}
\mu_{\Gcal, N, \beta, U}(w) : = \prod_{e \in \mathcal{E}} \frac{ \beta^{m_e(w)}}{m_e(w)!} \, \, \prod_{x \in \mathcal{V}} U_x(w) \qquad\qquad \forall w = (m, c, \pi) \in \mathcal{W}_{\mathcal{G}},
\end{equation}
where 
$U_x(w) := U \big ( \, \mathbf{n}_x(w), \mathbf{u}_x(w), t_x(w) \, \big )$
for each $x \in \mathcal{V}$.
Given a function
$f : \mathcal{W}_{\mathcal{G}} \rightarrow \mathbb{R}$, we use the same notation for the expectation of $f$,
$
\mu_{\Gcal,N, \beta, U}(f) : = \sum\limits_{w  \in \mathcal{W}_{\mathcal{G}}}  f(w) \, \, \mu_{\Gcal,N, \beta, U}(w).
$
\end{defn}
Thus, the weight function $U_x$ might depend not only on the total number of times $x$ is visited by paths of any given colour, but also on whether such paths are walks or loops.
Note that $\mu_{\Gcal,N, \beta, U}$ is not necessarily a probability measure and it does not necessarily have finite mass for all choices of $N \in \mathbb{N}_{>0}$ and $U$. 
In Section \ref{sect:special cases} we will prove 
that the random path model is equivalent to other 
models for certain choices of the parameters and the weight function, for which it is simple to deduce that the measure has finite mass, and hence so is for
$\mu_{\Gcal,N, \beta, U}$.
General sufficient conditions to ensure
that $\mu_{\Gcal,N, \beta, U}$ has finite mass can also be found in \cite[Proposition 3.1]{BenassiUeltschi}. 
Finally, note that the factorial term in the denominator of (\ref{eq:weight3}),
which was not present in the informal definition (\ref{eq:informaldefinition}), 
could be incorporated into the weight function. 
%however separating it from $U_x$ allows us to define a weight function which does not depend on where the links incident to $x$ come from.

We now introduce one of the central quantities, the two-point function.
\begin{defn}\label{def:partition functions}
For any set $A \subset \mathcal{V}$,  
define 
$\mathcal{S}_A$ as the set of configurations $w \in \mathcal{W}_{\mathcal{G}}$ such that $u_x^1(w) = 1$
for any $x \in A$ and $u_z^1(w) = 0$ for any $z \in \mathcal{V} \setminus A$.
Moreover, for any vertex $x \in \mathcal{V}$,  define $\mathcal{R}_x$ as the set of configurations 
$w \in \mathcal{W}_{\mathcal{G}}$ such that $u_x^1(w) = 2$
and $u_z^1(w) = 0$ for any $z \in \mathcal{V} \setminus \{x\}$.
We define 
 $Z_{\Gcal,N, \beta, U}(A) : = \mu_{\Gcal,N, \beta, U}(  \mathcal{S}_A  )$.
By a slight abuse of notation, we write 
  $Z_{\Gcal,N, \beta, U}(x,y)$ when $A = \{x,y\}$ such that $x \neq y$, and we define
    $Z_{\Gcal,N, \beta, U}(x,x) :=
     \mu_{\Gcal,N, \beta, U}(  \mathcal{R}_x  )$
     for any $x \in \mathcal{V}$.
 Finally, we define the two-point functions, 
\begin{align*}
G_{\Gcal,N, \beta, U}(A)  & : = \frac{Z_{\Gcal,N, \beta, U}(A)}{Z_{\Gcal,N, \beta, U} ( \emptyset ) } \quad \forall A \subset \mathcal{V}, \\ 
G_{\Gcal,N, \beta, U}(x,y)  & : = \frac{Z_{\Gcal,N, \beta, U}(x,y)}{Z_{\Gcal,N, \beta, U} ( \emptyset ) } \quad \forall x, y \in \mathcal{V}.
\end{align*}
\end{defn}
We use the convention that if $\mu_{ \mathcal{G}, N, \beta, U}( \mathcal{S}_{\emptyset}) = 0$, then $Z_{\Gcal,N, \beta, U}(\emptyset) : = 1$. 
This way the two-point function is always well-defined.
 Sometimes, for a lighter notation, we will omit the sub-scripts and, in the case that our graph is the torus $\T_L = \mathbb{Z}^d / L \mathbb{Z}^d$, we will write $Z_{L,N, \beta, U} =Z_{\Gcal,N, \beta, U}$ and $G_{L,N, \beta, U} =G_{\Gcal,N, \beta, U}$.

The quantity $Z_{\mathcal{G}, N, \beta, U}(A)$,
can be viewed as a sum over realisations 
$w \in \mathcal{W}_{\mathcal{G}}$ weighted by (\ref{eq:weight3}) such that there is precisely one unoriented $1$-walk start (or end) point at each vertex $x \in A$, and no 
unoriented $1$-walk start (or end) point at each vertex $x \not\in A$. 
In some of the special cases considered in Section \ref{sect:special cases}, such walks of colour $1$ will interact with an arbitrary number of loops of colour $i \in \{1, \ldots, N\}$. This is the case for the loop $O(N)$ model and of the loop representation of the spin $O(N)$ model. In other special cases, they will interact not only with an arbitrary number of loops of colour $i \in \{1, \ldots, N\}$,
but also with an arbitrary number of walks consisting of only one edge and having colour $3$. This is the case of random lattice permutations and of the double-dimer model. 
Note that if $A$ contains an odd number of vertices, then necessarily 
$Z_{\Gcal,N, \beta, U}(A)  = 0$.

\subsection{Special cases}
\label{sect:special cases}
In this section we will show that, under some specific assumptions on the number of colours $N \in \mathbb{N}_{>0}$ and on the weight function $U$, the two-point function of the random path model corresponds to the two-point function of several classical models in statistical mechanics.
In all the models defined below,  the function $G_{\mathcal{G}, N, \beta, U}(x,y)$ is conjectured to exhibit different behaviours as the distance between $x$ and $y$ increases in the limit of large boxes in $\mathbb{Z}^d$,
as the parameters in the definition of the model and the dimension $d \geq 2$ vary. 
We refer to the papers cited in the introduction for conjectures and known facts.
Our  Theorem \ref{theo:monotonicity} below states a general site-monotonicity property which holds for the two-point function of all these models.
In all the cases considered below, we let 
$\mathcal{G} = (\mathcal{V}, \mathcal{E})$ be an undirected, simple, finite graph, where $\mathcal{V}$ denotes the vertex set and $\mathcal{E}$ denotes the set of undirected edges. We also denote $\mathbf{n}=(n_1,\dots,n_N)$ and $\mathbf{u}=(u_1,\dots,u_N)$ for $n_i, u_i\in\N$, $i=1,\dots,N$.

\paragraph{Spin $O(N)$ model.} 
To begin, we define the spin $O(N)$ model precisely. 
Fix an integer $N \in \mathbb{N}_{>0}$.
We denote by  ${\varphi} \in {(\mathbb{S}^{N-1})}^{\mathcal{V}}$
the spin configurations,
where $\mathbb{S}^{N-1}\subset\R^N$ is the unit sphere of dimension $N-1$. For example, $\mathbb{S}^{0} = \{-1,1\}$ and 
$\mathbb{S}^1 \subset \mathbb{R}^2$ is the unit circle. We will often write spin configurations as $\varphi=(\varphi_x)_{x\in\Vcal}$ where $\varphi_x = (\varphi_x^1, \ldots, \varphi_x^N) \in \mathbb{R}^N$ is the value of $\varphi$ at the vertex $x\in\Vcal$.
The hamiltonian of the spin $O(N)$ model is defined as 
\begin{equation}\label{eq:hamiltonian}
H_{\Gcal,N}(\varphi) = - \sum\limits_{ \{x,y\} \in \mathcal{E}}  \,    \varphi_x \cdot \varphi_y,
\end{equation}
where $\varphi_x \cdot \varphi_y$ denotes the usual inner product of two $N$-component vectors. For a parameter $\beta\geq 0$ known as the \emph{inverse temperature} the partition function at inverse temperature $\beta$ is given by 
\begin{equation}\label{eq:partition function}
Z_{\Gcal,N, \beta}^{spin}  = 
\Big  (  \,  \prod_{x \in \mathcal{V}} 
\int_{\mathbb{S}^{N-1}} d \varphi_x    \,   \Big  ) 
e^{- \beta H_{\Gcal,N}(\varphi)},
\end{equation}
where $d \varphi_x$ denotes the uniform %(Haar)
probability measure on $\mathbb{S}^{N-1}$, that is,
$\int_{\mathbb{S}^{N-1}} d \varphi_x = 1$.
We introduce an expectation operator $\langle \cdot \rangle_{\mathcal{G}, N, \beta}$ on functions 
$(\mathbb{S}^{N-1})^{\mathcal{V}} \rightarrow \mathbb{R}$, that assigns the value
\begin{equation}\label{eq:deffunctional}
{\langle f \rangle}_{ \Gcal, N, \beta} = 
\frac{1}{Z_{ \Gcal,N, \beta}^{spin}} \,  
\Big  (  \,  \prod_{x \in \mathcal{V}} 
\int_{\mathbb{S}^{N-1}} d \varphi_x    \, \Big ) 
f \big ( \, (\varphi_x)_{x \in \mathcal{V}} \,  \big ) \, e^{- \beta H_{\Gcal,N}(\varphi)}.
\end{equation}
The next proposition formalises the correspondence between the correlation function of the classical spin $O(N)$ model and the point-to-point function of the random path model. 
Recall the definition of the two-point function, Definition \ref{def:partition functions}.
\begin{prop}\label{prop:equivalence}
Let $\mathcal{G} = ( \mathcal{V}, \mathcal{E})$ be a finite graph, fix an integer $N \in \mathbb{N}_{>0}$. When the weight function $U : \mathbb{N}^{2N+1} \rightarrow \mathbb{R}$ is defined as follows,
\begin{equation}\label{eq:weightSpin}
U( \mathbf{n}, \mathbf{u},t) = 
\begin{cases}
\frac{\Gamma(\frac{N}{2})}{2^n \, \Gamma( n + \frac{N}{2})}&
\quad  \mbox{if } u_2 = u_3 = \ldots u_N = 0, \\
0& \quad \mbox{if otherwise},
\end{cases}
\end{equation}
where $n =  n_1 + n_2 + \ldots + n_N$ and there is no dependence on $t$,
we have that, for any $\beta \in \mathbb{R}_{\geq 0}$, $A \subset \mathcal{V}$,
\begin{equation}\label{eq:conclusionequivalence}
\langle  \prod_{x \in A} \varphi^1_x  \rangle_{\mathcal{G}, N, \beta} =   G_{\Gcal,N, \beta, U}(A).
\end{equation}
\end{prop}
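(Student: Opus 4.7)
The plan is to prove \eqref{eq:conclusionequivalence} by a high-temperature expansion of the $O(N)$ model, matching the resulting sum term-by-term with the sum defining $Z_{\Gcal,N,\beta,U}(A)$. The form of $U$ in \eqref{eq:weightSpin} is chosen precisely so that the single-vertex moment integrals on the sphere factorise as a vertex weight times the number of pairings of the RPM; once this factorisation is in place, the identity becomes a bookkeeping exercise.

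Concretely, I would first write
\[
e^{-\beta H_{\Gcal,N}(\varphi)} = \prod_{\{x,y\} \in \Ecal} \sum_{m_e \geq 0} \frac{\beta^{m_e}}{m_e!}(\varphi_x \cdot \varphi_y)^{m_e}
\]
and expand each power by the multinomial identity $(\varphi_x \cdot \varphi_y)^{m_e} = \sum_{c_e \in \{1, \dots, N\}^{m_e}} \prod_{p=1}^{m_e} \varphi_x^{c_e(p)}\varphi_y^{c_e(p)}$, so that the numerator of $\langle \prod_{x \in A}\varphi_x^1\rangle_{\Gcal, N, \beta}$ equals
\[
\sum_{m \in \Mcal_\Gcal} \sum_{c \in \Ccal_\Gcal(m)} \prod_{e \in \Ecal} \frac{\beta^{m_e}}{m_e!} \prod_{x \in \Vcal} \int_{\mathbb{S}^{N-1}} \prod_{i=1}^N (\varphi_x^i)^{q_x^i}\, d\varphi_x,
\]
where $q_x^i := k_x^i + \delta_{i,1}\mathbf{1}_{x \in A}$ and $k_x^i$ is the number of $i$-link endpoints at $x$ read off from $(m, c)$. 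A classical moment computation on the sphere (using $\Gamma(\tfrac{q+1}{2}) = \sqrt{\pi}\,(q-1)!!/2^{q/2}$ for even nonnegative $q$) yields, when all $q_x^i$ are even,
\[
\int_{\mathbb{S}^{N-1}} \prod_{i=1}^N (\varphi^i)^{q_x^i}\, d\varphi = \frac{\Gamma(N/2)}{2^{n_x}\,\Gamma(n_x + N/2)} \prod_{i=1}^N (q_x^i - 1)!!,
\]
with $n_x = \tfrac12 \sum_i q_x^i$, and zero otherwise; the prefactor is exactly the value of $U$ from \eqref{eq:weightSpin}.

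It remains to identify the residual factor $\prod_i (q_x^i - 1)!!$ as the number of pairings at $x$ consistent with the $\Scal_A$ constraints. The parity conditions (all $q_x^i$ even) translate into: $k_x^i$ even for all $(i,x)$ except that $k_x^1$ must be odd at the sites $x \in A$, which is precisely the condition that $(m, c)$ admits a pairing with $u_x^1 = \mathbf{1}_{x \in A}$ and $u_x^j = 0$ for $j \geq 2$. For such $(m,c)$, the number of pairings at $x$ with the prescribed $u_x^i$ is $\prod_i \binom{k_x^i}{u_x^i}(v_x^i - 1)!! = \prod_i (q_x^i - 1)!!$. Summing then gives $\sum_{(m,c,\pi) \in \Scal_A}\prod_e \beta^{m_e}/m_e! \prod_x U_x(w) = Z_{\Gcal, N, \beta, U}(A)$, and the same argument with $A = \emptyset$ identifies $Z^{spin}_{\Gcal, N, \beta}$ with $Z_{\Gcal, N, \beta, U}(\emptyset)$; dividing yields \eqref{eq:conclusionequivalence}. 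The main obstacle is this final combinatorial identification — verifying that the constraint $u_x^j = 0$ for $j \geq 2$ enforced by the cutoff in \eqref{eq:weightSpin}, together with the parity-driven value of $u_x^1$, exactly reproduces the double factorial coming out of the sphere integral — but once the indexing is set up it is mechanical.
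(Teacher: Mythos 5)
Your proposal is correct and follows essentially the same route as the paper's appendix proof: a term-by-term high-temperature expansion into coloured links, the sphere moment identity producing the prefactor $\Gamma(N/2)/(2^{n}\Gamma(n+N/2))$ times double factorials, and the identification of $\prod_i(q_x^i-1)!!$ with the number of admissible pairings under the $\Scal_A$ constraints (the paper merely organises the expansion via $(m^1,\dots,m^N)$ and a multinomial recombination rather than expanding $(\varphi_x\cdot\varphi_y)^{m_e}$ directly, which is the same computation). The combinatorial check $\binom{k_x^1}{1}(k_x^1-2)!!=(q_x^1-1)!!$ at sites of $A$ is exactly the paper's observation that $q!!$ counts pairings leaving one link unpaired.
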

The choice of the weight function (\ref{eq:weightSpin}) is such that only realisations  $w \in \mathcal{W}_{\mathcal{G}}$ which present no walk of colour $i=2,\ldots N$ are allowed, and the weight depends on the total number of times a loop of arbitrary colour or a walk of colour $1$ visits the vertices.
Thus, by Definition \ref{def:partition functions}, the point-to-point function which appears in the right-hand side of (\ref{eq:conclusionequivalence})
corresponds to the ratio between the weight of realisations  $w \in \mathcal{W}_{\mathcal{G}}$ with an arbitrary number of loops and  precisely one walk of colour $1$ starting (or ending) at any vertex of $A$, and the weight of realisations $w \in \mathcal{W}_{\mathcal{G}}$ with only loops.
The proof of this proposition is presented in the appendix of this paper. It is a re-adaptation of \cite[Proposition 6.3]{BenassiUeltschi},
where a different correlation function was considered.

A classical case of study is the spin O(N) model in the presence of an external magnetic field, which corresponds to the case in which a term $h \sum_{x \in \mathcal{V}} \varphi_x^{N}$ is added to the right-hand side of (\ref{eq:hamiltonian}), where $h \in \mathbb{R}$ is non-zero. Proposition \ref{prop:equivalence} can be adapted to this case by modifying 
(\ref{eq:weightSpin}) appropriately and the monotonicity properties for the two point functions $< \varphi_x^i \,  \varphi_y^i  >_{\mathcal{G}, N, \beta}$ when   $i = 1, \ldots, N$,
Theorem \ref{theo:monotonicityspinO(N)},  hold with no change in the proof.

\paragraph{Loop $O(N)$ model.} 
The loop $O(N)$ model is defined as follows.
Given a set $A \subset \mathcal{V}$, we let  $\Omega_{loop}(A)$ be the set of spanning sub-graphs of  $\mathcal{G}$ such that the degree of every vertex  in $A$ equals one and the degree of every vertex in $\mathcal{V} \setminus A$ equals zero or two.
It follows from this definition that each realisation $\omega \in \Omega_{loop}(A)$ can be viewed as a collection of vertex-self-avoiding vertex-disjoint loops and walks, where at every vertex of $A$ one walk starts (or ends), and no walk starts or ends at the vertices of $\mathcal{V} \setminus A$.
Let $N \in \mathbb{N}_{>0}$  and  $\beta \in \mathbb{R}_{> 0}$ be two parameters.
We define 
$$
Z^{loop}_{\Gcal,N, \beta }(A) := \sum\limits_{\omega \in \Omega_{loop}(A)} \beta^{ e(\omega) } N^{ \ell(\omega)},
$$
where $e(\omega)$ is the total number of edges and $\ell(\omega)$ is the total number of loops of the graph $\omega \in \Omega_{loop}(A)$, and we define
$
G^{loop}_{\mathcal{G},  N, \beta}( A ) := Z^{loop}_{\Gcal,N, \beta }(A)/Z^{loop}_{\Gcal,N, \beta }(\emptyset).
$
Under the choice of the weight function of the random path model as follows, 
\begin{equation}\label{eq:weightoop}
U( \mathbf{n}, \mathbf{u},t) := 
\begin{cases}
1 \mbox{ if } n \in \{0, 1\}, \mbox{ and }  t=u_i = 0 \mbox{ for } i=2, \ldots N,   \\
0 \mbox{ if } otherwise,
\end{cases}
\end{equation}
where $ n =n_1 + \ldots + n_N$,
we have that, for any $A \subset \mathcal{V}$,
\begin{equation}
Z^{loop}_{\mathcal{G},  N, \beta}( A ) = Z_{\mathcal{G},  N, \beta, U}(A).
\end{equation}
This follows as the choice of $U$ ensures that we have an ensemble of vertex-self-avoiding, vertex-disjoint arbitrarily coloured loops and 1-walks, hence $m_e(w)=1$ for any $w\in\Wcal_{\Gcal}$ that contributes to $Z^{loop}_{\mathcal{G},  N, \beta}( A )$. From this it follows that
\begin{equation}\label{eq:correspondenceloopO(N)}
G^{loop}_{\mathcal{G},  N, \beta}( A ) = G_{\mathcal{G},  N, \beta, U}(A),
\end{equation}
where on the right-hand side we have the point-to-point function of the random path model.
The definition of the loop $O(N)$ model that we provided can be found for example in \cite{PeledSpinka} in the case of the hexagonal lattice. An alternative definition  where the loops are allowed to share the vertices but they are not allowed to share the edges is provided in \cite{Chayes}.
To obtain the loop $O(N)$ model which was defined in
\cite{Chayes}, one would need to define the weight function $U$ slightly differently than (\ref{eq:weightoop}).
See also \cite{DC1, DC2, GlazmanManolescu1, GlazmanManolescu2, Taggi} for recent papers.

\paragraph{Random lattice permutations.}
Let $\Omega_{per}(\emptyset)$ be the set of permutations $\pi : \mathcal{V} \rightarrow \mathcal{V}$   such that, for every $z \in \mathcal{V}$, either  $\{z, \pi(z)\} \in \mathcal{E}$ or $\pi(z) =z$. For any pair of distinct vertices $x, y \in \mathcal{V}$, let $\Omega_{per}(x,y)$ be the set of functions $\pi : \mathcal{V} \setminus \{y\}  \rightarrow \mathcal{V} \setminus \{x\}$ such that, for  every 
$z \in \mathcal{V}$, either $\{z, \pi(z)\} \in \mathcal{E}$ or $\pi(z) =z$, and, moreover,  every $z \in \mathcal{V} \setminus \{x,y\}$ has precisely one input and one output in $\pi$ (from this it also follows that $x$ has precisely one output and that $y$ has precisely one input).
This model has been studied in \cite{Betz, Betz2, Betz3}.

By drawing a directed edge from $z$ to $\pi(z)$ for every $z$ such that $\pi(z) \neq z$,  we see that any $\pi \in \Omega_{per}(\emptyset)$ can be viewed as a collection of vertex-disjoint oriented vertex-self-avoiding loops and dimers, where a dimer is a pair of nearest-neighbour edges, $z_1,z_2$, such that $\pi(z_1) =z_2$ and $\pi(z_2)=z_1$. Similarly, any $\pi \in \Omega_{per}(x,y)$ can be viewed as a collection of an arbitrary number of mutually-disjoint oriented self-avoiding loops and dimers and of one directed self-avoiding walk starting at $x$ and ending at $y$.
For simplicity we will only consider sets $A\subset \mathcal{V}$ of the form $\{x,y\}$ or $\emptyset$.
For any $\alpha \in \mathbb{R}$, we define,
$$
Z^{per}_{\Gcal, \alpha }(A) := \sum\limits_{\pi \in \Omega_{per}(A)} \exp \big ( - \alpha  \, e(\pi) \, \big ),
$$
where, here, $e(\pi) := \sum_{x \in \mathcal{V}} \mathbbm{1}\{x \neq \pi(x)\}$.
Moreover,  we define
$
G^{per}_{\mathcal{G}, \alpha} (A) :=  Z^{per}_{\Gcal, \alpha}(A) / Z^{per}_{\Gcal, \alpha}(\emptyset).
$
To represent random lattice permutations as a special case of the random path model, we need to fix $N=3$ and choose the weight function as follows,
\begin{equation}\label{eq:weightpermutations}
\quad U( \mathbf{n}, \mathbf{u},t) := 
\begin{cases}
1 \quad  & \mbox{ if } n \in \{0, 1\} \mbox{ and }  t=u_2 = u_3 = n_3 = 0, \\
\beta^{\frac{1}{2}}  \quad  & \mbox{ if } n  = 1 \mbox{ and }  u_3 = 1,   \\
0 & \mbox{ otherwise.}
\end{cases}
\end{equation}
where $ n =n_1 +  n_2 + n_3$.
Under this choice, we have that, for any $\alpha \in \mathbb{R}$, $\beta = e^{- \alpha}$ and $A = \{x,y\}$ for $x\neq y$ or $A=\emptyset$,
\begin{equation}
Z^{per}_{\Gcal, \alpha }(A) = Z_{\Gcal,3, \beta,  U}(A).
\end{equation}
The previous correspondence holds true since, by choosing $U$ as above, any $w \in \mathcal{W}_{\mathcal{G}, 3, \beta, U}$ 
can be viewed as a collection of walks of length two (dimers) of colour $3$, which are weighted by $\beta^2 = e^{- 2 \alpha}$ as in random permutations (one factor $\beta$ corresponds to the edge-weight and the additional factor $\beta$ corresponds to the product of the vertex-weights associated to the end-points of the dimer), undirected loops which have multiplicity two (as in random lattice permutations, where the loops are uncoloured but directed and for this reason they have multiplicity two as well) and a self-avoiding walk of colour $1$ connecting $x$ to $y$ which does not overlap with loops and dimers. It follows from this that
\begin{equation}\label{eq:correspondencepermut}
G^{per}_{\Gcal, \alpha }(A) = G_{\Gcal,3, \beta,  U}(A).
\end{equation}

\paragraph{Dimer model and double-dimer model.}
Given $\mathcal{G} = (\mathcal{V}, \mathcal{E})$ and $A \subset \mathcal{V}$, let $\mathcal{G} \setminus A$ be the sub-graph of $\mathcal{G}$ which is obtained from $\mathcal{G}$ by removing all vertices of $\mathcal{V}$ which are in $A$ and all the edges which are incident to the vertices in $A$.
A \textit{dimer cover} of a graph is a spanning sub-graph of that graph such that every vertex has degree precisely one.
Let $\Omega^{dim}(A)$ be the set of dimer covers of  the graph $\mathcal{G} \setminus A$
(which might be the empty set in some cases).
For the dimer model, we define
\begin{equation}\label{eq:pointdimer}
G_{\mathcal{G}}^{dim}(A) := \frac{ |\Omega^{dim}(A)|}{|\Omega^{dim}(\emptyset)|}.
\end{equation}
In the dimer model, the quantity 
$G_{\mathcal{G}}^{dim}(\{x,y\})$ is usually referred to as \textit{monomer-monomer correlation} and it is a central quantity in the  study of this system. Its value can be computed explicitly on some planar graphs $\mathcal{G}$
 \cite{Estelle, Kasteleyn, Kenyon3}.
As far as we know, on the torus of dimension $d \geq 3$,
our new general monotonicity property, Theorem \ref{theo:monotonicity} below,
is the only known fact on the behaviour of $G_{\mathcal{G}}^{dim}(\{x,y\})$.
To explain how (\ref{eq:pointdimer}) is obtained from the definition of the random path model, we introduce the double-dimer model.

A double-dimer configuration is the union of two dimer covers \cite{Kenyon} of $\mathcal{G}$.
More precisely, consider   $A \subset \mathcal{V}$ such that $A$ contains two distinct vertices, $A= \{x,y\}$, or $A = \emptyset$, and define
\begin{equation}\label{eq:definitionspacedd}
\Omega^{d.d.}(A) := \Omega^{dim}(A) \times \Omega^{dim}(\emptyset).
\end{equation}
Any realisation of the double-dimer model,  $\omega = (\omega_1, \omega_2) \in \Omega^{d.d.}(A)$,
can be viewed as a collection of an arbitrary number of mutually-disjoint self-avoiding loops and double-dimers
 and a self-avoiding walk connecting $x$ and $y$ by superimposing $\omega_1$ and $\omega_2$ 
 (a double-dimer corresponds to the superposition of two dimers occupying the same edge).
 %Note that the order is relevant, for example for any pair $\omega_1, \omega_2 \in \Omega^{dim}(\emptyset)$ such that $\omega_1 \neq \omega_2$, we have that 
% $(\omega_1, \omega_2)$ and  $(\omega_2, \omega_1)$ are  distinct and both in $\Omega^{d.d.}(\emptyset)$.
In the double dimer model, the measure on the configuration space (\ref{eq:definitionspacedd}) is the uniform measure.
For any set $A = \{x,y\}$, define
\begin{equation}\label{eq:connectiondimerdoubledimer}
G_{\mathcal{G}}^{d.d.}(A) := \frac{|\Omega^{d.d.}(A)|}{| \Omega^{d.d.}(\emptyset)|} = 
 \frac{|\Omega^{dim}(A)| |\Omega^{dim}(\emptyset)|}{| \Omega^{dim}(\emptyset)|^2} = 
 G_{\mathcal{G}}^{dim}(A),
\end{equation}
 and note that the second identity follows from (\ref{eq:definitionspacedd}),
while the third identity follows from the definition (\ref{eq:pointdimer}) after a simplification.
Recall that $n = n_1 + \ldots  n_N$ and choose the weight function $U$ as follows,
\begin{equation}\label{eq:weightdoubledimer}
\quad U( \mathbf{n}, \mathbf{u},t) := 
\begin{cases}
1 \quad  & \mbox{ if } n  = 1 \mbox{ and }  t=u_2 = u_3 = n_3 = 0, \\
1  \quad  & \mbox{ if } n  = 1 \mbox{ and }  u_3 = 1,   \\
0 & \mbox{ otherwise,}
\end{cases}
\end{equation}
and note that,  under this choice,
$$
| \Omega^{d.d.}(\{x,y\})    |= Z_{\mathcal{G}, 3, 1, U}(\{x, y\}), \quad \quad 
| \Omega^{d.d.}(\emptyset)    |= Z_{\mathcal{G}, 3, 1, U}(\emptyset).
$$
Indeed, the weight function $U$, allows mutually-disjoint self-avoiding loops having colour $1$ or $2$ and walks consisting of just one link which has colour $3$ and does not share his end-points with any other link, moreover for every vertex there exist either one or two links which are incident to it.
The value $Z_{\mathcal{G}, 3, 1, U}(\{x, y\})$ corresponds  to the number of such configurations with a `long' walk of colour $1$ connecting $x$ and $y$, while the value $Z_{\mathcal{G}, 3, 1, U}(\emptyset)$ corresponds to the number of such configurations with no `long' walk of colour $1$.
Thus,  each loop has multiplicity two, like the loops in the double-dimer model, and each single link has multiplicity one, like double-dimers in the double-dimer model. Moreover,  each vertex is touched by a loop or by a single link like in the double dimer model, in which every vertex belongs to a loop or to a double-dimer.
This explains the previous two identities. From such identities we deduce that,
$
G_{\mathcal{G}}^{d.d.}(\{x,y\}) = G_{\mathcal{G}, 3, 1, U}(x,y).
$
Thus, it follows from this relation and from (\ref{eq:connectiondimerdoubledimer}) that,
\begin{equation}\label{eq:connectiondimer}
G_{\mathcal{G}}^{dim}(\{x,y\}) = G_{\mathcal{G}, 3, 1, U}(x,y).
\end{equation}
This explains why our theorems  also apply to $G_{\mathcal{G}}^{dim}(\{x,y\})$.

\subsection{Main results}
We now state our main results. 
Our first two theorems generalise to the spin $O(N)$ model with arbitrary $N \in \mathbb{N}_{>0}$ and to all the models which were introduced in the previous section the site-monotonicity properties which were derived by Messager and Miracle-Sole,  \cite[Corollary 1]{MMS}  by using Lebowitz inequalities \cite{Lebowitz}.
In \cite{MMS},  the cases of free and `plus' boundary conditions in presence of a uniform external magnetic field  when  $N=1$  and of free boundary conditions with no external magnetic field when $N = 2$
  have been considered. 
An alternative derivation of such monotonicity properties has been given in \cite[Theorem 3.1]{Hegerfeldt} in the specific case of the Ising model, $N=1$.
Contrary to \cite{MMS}, the results of \cite{Hegerfeldt} also apply to periodic boundary conditions and they are not restricted to nearest neighbour interactions.
Note that, although the monotonicity properties which have been derived in  \cite{Hegerfeldt, MMS} hold for a much more restrictive class of models,  they are stronger than ours, since they do not require averaging  the two point function at two sites like in equation (\ref{eq:monotonicityforaveragecorrelation1}) below.

We use $\boldsymbol{e}_i$ to denote cartesian vectors
and $o = (0, 0, \ldots, 0) \in \T_L$ to denote the origin.
\begin{thm}[\textbf{Site-monotonicity for paths}]\label{theo:monotonicity}
Consider the torus $\T_L=\Z^d/L\Z^d$ in dimension $d\geq 2$ with $L\in2\N$ and nearest-neighbour edges and let $i\in\{1,\dots,d\}$. Assume that $U$ is defined as in Definition \ref{def:measure}, let $N \in \mathbb{N}_{>0}$ and $\beta \geq 0$ be arbitrary,
suppose that the measure $\mu_{(\T_L, \mathcal{E}_L), N, \beta, U}$ 
defined in Definition \ref{def:measure}
has finite mass.
If we write $z=(z_1,\dots,z_d)$ then, 
\begin{align}\label{eq:projectioninequality}
\mbox{if  $z_i\in 2\Z + 1$} \quad \quad G_{L, N, \beta, U}(o, z) & \leq G_{L, N, \beta, U}(o, z_i \be_i), \\
\label{eq:projectioninequality2}
\mbox{if  $z_i\in 2\Z \setminus \{0\}$} \quad  \quad  G_{L, N, \beta, U}(o, z)  & \leq \frac{1}{2} G_{L, N, \beta, U}(o, (z_i-1) \be_i) + 
\frac{1}{2} G_{L, N, \beta, U}(o, (z_i+1) \be_i).
\end{align}
Further, for $y\in\T_L$ with $y\cdot \be_i=0$ (possibly $y=o$) the function
\begin{equation}\label{eq:monotonicityforaveragecorrelation1}
G_{L, N, \beta, U}(o, y + n \be_i) + G_{L, N, \beta, U}(o, n \be_i), 
\end{equation}
is a non-increasing function of $n$ for odd $n$ in $(0,L/2)$.
\end{thm}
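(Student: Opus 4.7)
The three assertions will all follow from the reflection inequality (\ref{eq:stat1corrineq}) (or an ``averaged'' version of it) applied with well-chosen edge-bisecting planes perpendicular to $\be_i$, combined with the torus symmetries of $G_L$: translation invariance and $G_L(o,z)=G_L(o,Rz)$ for any coordinate-axis reflection $R$.

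For the projection inequalities (\ref{eq:projectioninequality}) and (\ref{eq:projectioninequality2}), a single application of (\ref{eq:stat1corrineq}) will suffice. For (\ref{eq:projectioninequality}), take $\Theta$ to be the reflection through the hyperplane $\{x_i=z_i/2\}$; this is edge-bisecting because $z_i$ is odd, and after using torus symmetry to arrange $0<z_i<L/2$, the points $o$ and $z$ sit on opposite halves. Since $\Theta$ flips only the $i$-th coordinate, $\Theta(o)=z_i\be_i$ and $z-\Theta(z)=z_i\be_i$, so by translation invariance both terms on the right of (\ref{eq:stat1corrineq}) reduce to $G_L(o,z_i\be_i)$, yielding the claim in one step. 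For (\ref{eq:projectioninequality2}), the same argument with $\Theta$ the reflection through $\{x_i=(z_i-1)/2\}$ (a half-integer plane because $z_i$ is even) produces $\Theta(o)=(z_i-1)\be_i$ and $z-\Theta(z)=(z_i+1)\be_i$, which gives the desired average.

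For the monotonicity (\ref{eq:monotonicityforaveragecorrelation1}), set $A_n:=G_L(o,y+n\be_i)+G_L(o,n\be_i)$. The plan is to combine (i) the torus symmetry $A_n=A_{L-n}$ with (ii) a discrete convexity $A_n\leq\tfrac12(A_{n-2}+A_{n+2})$ for odd $n$ with $3\leq n\leq L-3$, and then deduce the non-increasing property on odd $n\in(0,L/2)$. Part (i) follows by combining $G_L(o,z)=G_L(o,-z)$ with the coordinate reflection negating all coordinates orthogonal to $\be_i$, which fixes $o$ and $n\be_i$ and sends $y\mapsto -y$ (an isometry since $y\cdot\be_i=0$). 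For (ii) I will use the averaged version of reflection positivity --- obtained by Cauchy--Schwarz plus AM--GM applied to the sums $\sum_{x\in X}f_x$ and $\sum_{y'\in Y}g_{y'}$ of local endpoint observables --- with $X=\{o,y\}$, $Y=\{n\be_i,y+n\be_i\}$ and $\Theta$ the reflection through $\{x_i=(n-2)/2\}$. A direct expansion of the four bilinear terms in each of the three sums, invoking translation invariance and the coordinate reflection of (i), shows that these sums equal $2A_n$, $2A_{n-2}$, and $2A_{n+2}$ respectively, giving the convexity.

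Finally, setting $\Delta_k:=A_{2k+3}-A_{2k+1}$, the convexity says that $k\mapsto\Delta_k$ is non-decreasing, while the symmetry $A_n=A_{L-n}$ implies the antisymmetry $\Delta_k=-\Delta_{L/2-2-k}$; combining these forces $\Delta_k\leq 0$ for $k$ in the lower half, which is exactly the non-increasing property of $n\mapsto A_n$ on odd $n\in(0,L/2)$ --- this is the ``contradiction'' step of the argument, since any $\Delta_k>0$ below the midpoint would, by monotonicity, force $\Delta_{L/2-2-k}\geq\Delta_k>0$, contradicting antisymmetry. The main technical obstacle I anticipate is justifying the averaged form of (\ref{eq:stat1corrineq}) in the random path setup: it should follow by Cauchy--Schwarz applied to linear combinations of the same endpoint indicator observables used to derive (\ref{eq:stat1corrineq}), but one has to verify that each summand is supported in the correct half-torus relative to $\Theta$ and bookkeep the four bilinear terms carefully.
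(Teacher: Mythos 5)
Your proposal follows essentially the same route as the paper: the projection inequalities \eqref{eq:projectioninequality}--\eqref{eq:projectioninequality2} come from a single edge-bisecting reflection chosen exactly as you describe (Proposition \ref{prop:general sufficient} with $q=0$ and $q=1$), and the monotonicity \eqref{eq:monotonicityforaveragecorrelation1} from the discrete convexity $A_n\le\tfrac12(A_{n-2}+A_{n+2})$ obtained with precisely your four-point field $\{o,y,n\be_i,y+n\be_i\}$ (Proposition \ref{prop:superharmonicity} with $q=2$), combined with the torus symmetry $A_n=A_{L-n}$ and a contradiction argument. The one place where the paper's execution differs from your sketch is the derivation of the averaged reflection inequality: the event that the unique $1$-walk runs between two prescribed endpoints does not factorise across the reflection plane, so rather than applying Cauchy--Schwarz to sums of endpoint indicators, the paper applies it to the multiplicative generating functional $Z^{\text{field}}(\eta\bh)=\mu\big(\prod_x(\eta h_x)^{u^1_x}\big)$, which does lie in $\mathcal{A}^+\cdot\Theta\mathcal{A}^+$, and extracts the order-$\eta^2$ coefficient (Theorem \ref{thm:RPaverage}); this is exactly the technical obstacle you flag, and it is resolvable by that device.
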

From the second statement of the previous theorem (applied when $y=o$) we deduce that 
$G_{L, N, \beta, U}(o,  n \be_i)$ is a non-increasing 
function of $n$ for odd $n$ in $(0,L/2)$.
Thus  we deduce that, for any $L \in 2 \mathbb{N}$, any coordinate $i \in \{1, \ldots, d\}$,
and any site $z = (z_1, z_2, \ldots, z_d)  \in \T_L$, 
\begin{equation}\label{eq:monotonicity2}
G_{L, N, \beta, U}(o,z) \leq 
G_{L, N, \beta, U}(o,z_i \boldsymbol{e}_i) \leq 
G_{L, N, \beta, U}(o,  \boldsymbol{e}_i).
\end{equation}
This theorem can be viewed as a statement about the geometry of  random (or self-avoiding) walks interacting with ensembles of loops and dimers.
When we force a long walk connecting two points $o$ and $z$ it is \textit{always} the case that the most favourable thing for the system in terms of energy-entropy balance is that such two points are neighbours, in such a way that the walk resembles the other objects (which are  loops, dimers, or both, depending on $U$).
From such monotonicity properties we also deduce that, in great generality, the two point function is not only bounded from above uniformly in the sites, but also in the  size of the torus (indeed, under the insertion of one link, which has a finite cost, one sees that the right-hand side of (\ref{eq:monotonicity2}) is a finite constant independent from $L$ times a probability).
The same fact would not hold true in the absence of any interaction: for example the two-point function between $o$ and $\boldsymbol{e}_1$ of a \textit{self-avoiding walk} weighted by $\beta$ \cite{MadrasSlade} (with no loops) diverges with $L$ when  $\beta$ is large enough. 
%The case of a single self-avoiding walk does not fit our framework as there is no choice of the weight function $U$ that can ensure that any walk passing through a vertex $x$ is the \emph{only} walk present, further it can be seen that reflection positivity does not hold in this case.

\begin{rem}
We deduce  from (\ref{eq:connectiondimer}) and from (\ref{eq:monotonicity2}) that, when $L \in 2 \mathbb{N}$,  the monomer-monomer correlation function of the dimer model, which was defined before (\ref{eq:connectiondimer}), satisfies
the following uniform upper bound,
\begin{equation}\label{eq:upperbounddimerts}
\forall z \in \T_L, \quad G_{L}^{dim}(\{o, z\}) \leq G_{L}^{dim}(\{o, \boldsymbol{e}_1\}) = \frac{1}{2d},
\end{equation}
where the identity follows from rotational symmetry and from the fact that, when the monomers are placed at $o$ and  $\boldsymbol{e}_1$, the monomer-monomer correlation function equals the probability that a uniform dimer cover with no monomers has a dimer on the edge $\{o, \boldsymbol{e}_1\}$.
This improves the upper bound on the ratio between the number of dimer covers with two monomers at arbitrary positions on the torus and the number of dimer covers with no monomers which was derived in \cite[Theorem 2]{KenyonC}.
There, it was proved through a multi-valued map principle that such a ratio, which is denoted by $\alpha$, is less or equal than $\frac{L^{2d}}{4}$ in any dimension $d \geq 3$. 
Using  translation invariance and the fact that $G_{L}^{dim}(\{o, z\})$ is zero whenever $z$ has  even graph distance from $o$, we deduce from (\ref{eq:upperbounddimerts}) that $\alpha \leq \frac{L^{2d}}{8d}$.
\end{rem}

\begin{rem}\label{remark:evensites!}
The fact that Theorem \ref{theo:monotonicity} holds only for `odd points' is not due to a limitation of our method. Indeed, one cannot expect that such a monotonicity property holds for any integer $n$ (odd or even) for any weight function $U$ satisfying the assumption of Definition \ref{def:measure}.
To explain this, recall the definition of the double-dimer model. 
Since no dimer cover of the graph $(\mathbb{Z}^2 \setminus L \mathbb{Z}^2) \setminus \{o, 2 \boldsymbol{e}_1 \}$
exists,
we deduce that $G_{L}^{d.d.}(o, 2 \boldsymbol{e}_1 )
= 0$.
Thus, if the previous theorem was true at any point (like Theorem \ref{theo:monotonicityspinO(N)} below), then  we would conclude that \emph{for any} 
even $L \in \mathbb{N}$ and any $z \in \T_L$ such that $\| z \| \geq 2$,
$G_{L}^{d.d.}(o, z) =0$, which is not true!
\end{rem}

From this remark we will infer that the double-dimer model is reflection positive for reflection through edges but not for reflection through sites.
Nevertheless for some weight functions, $U$, the monotonicity properties of the previous theorem hold true for any integer $n \in (0, L/2)$ (odd or even). This is the case of the spin $O(N)$ model with arbitrary $N \in \mathbb{N}_{>0}$, as the next theorem states.

\begin{thm}[\textbf{Site-monotonicity for spins}]\label{theo:monotonicityspinO(N)}
Consider the torus $\T_L=\Z^d/L\Z^d$ in dimension $d\geq 2$ with $L\in2\N$ and nearest-neighbour edges and let $i\in\{1,\dots,d\}$. 
Let $\langle \varphi_o \cdot \varphi_z \rangle _{L, N, \beta}$ be the spin-spin correlation of the spin O(N) model on a torus of side length $L \in \mathbb{N}$,  $N \in \mathbb{N}_{>0}$, and inverse temperature $\beta$.
We have that, for any $z  = (z_1, \ldots, z_d) \in \T_L$,
\begin{equation}
\langle \varphi_o \cdot \varphi_{z} \rangle_{L, N, \beta} \leq  \langle \varphi_o \cdot \varphi_{z_i \boldsymbol{e}_i} \rangle_{L, N, \beta}.
\end{equation}
Moreover, for $y\in\T_L$ with $y\cdot \be_i=0$ (possibly $y=o$) the function
\begin{align*}
& \langle \, \varphi_o \cdot \varphi_{ y + n \be_i} \rangle_{L, N, \beta} + \langle \varphi_o \cdot \varphi_{n \be_i} \rangle_{L, N, \beta}
\end{align*}
is a non-increasing function of $n$ for any integer $n$ in $(0,L/2]$.
\end{thm}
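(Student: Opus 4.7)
The strategy is to reduce to the random path model and supplement the edge-reflection inequality (\ref{eq:stat1corrineq}) with a site-reflection analogue available specifically for the spin $O(N)$ measure. By Proposition \ref{prop:equivalence} and $O(N)$-invariance, $\langle \varphi_o \cdot \varphi_z\rangle_{L,N,\beta} = N\,G_{L,N,\beta,U}(o,z)$ with $U$ as in (\ref{eq:weightSpin}), so both claims of the theorem are equivalent to the corresponding statements for $G_{L,N,\beta,U}$. For $z_i$ odd the projection inequality is already (\ref{eq:projectioninequality}), and for odd $n$ the monotonicity claim is covered by Theorem \ref{theo:monotonicity}. What needs new input is the even case, where the general RPM yields only the weaker averaged bound (\ref{eq:projectioninequality2}).

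The key new ingredient is that the spin $O(N)$ model on $\T_L$ with $L$ even is reflection positive under reflections through hyperplanes \emph{of sites}, not only through hyperplanes bisecting edges. For the involution $\Theta:x_i\mapsto -x_i \pmod L$, whose fixed locus is $\{x_i=0\}\cup\{x_i=L/2\}$, the decisive observation is that no nearest-neighbour edge in $\Ecal_L$ crosses this locus -- any such crossing would demand adjacent sites with $x_i$-coordinates in $\{1,\dots,L/2-1\}$ and $\{L/2+1,\dots,L-1\}$, which is impossible. Hence the Hamiltonian (\ref{eq:hamiltonian}) splits cleanly as $H=H_+(\varphi|_{\T_L^+})+H_-(\varphi|_{\T_L^-})$ with the two halves sharing only the spins on the fixed planes. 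A direct integration over the interior degrees of freedom, mirroring the derivation of (\ref{eq:stat1corrineq}), yields $\langle f\cdot \Theta f\rangle_{L,N,\beta}\geq 0$ for every $f$ supported on $\T_L^+$, together with the Cauchy-Schwarz companion. Applied to $f=\varphi_x$ and $g=\varphi_{\Theta(y)}$ for $x\in\T_L^+\setminus R$, $y\in\T_L^-\setminus R$, and followed by AM-GM, this produces
\begin{equation*}
\langle \varphi_x\cdot\varphi_y\rangle_{L,N,\beta} \leq \tfrac12\bigl(\langle \varphi_x\cdot\varphi_{\Theta(x)}\rangle_{L,N,\beta} + \langle \varphi_{\Theta(y)}\cdot\varphi_y\rangle_{L,N,\beta}\bigr),
\end{equation*}
the site-reflection analogue of (\ref{eq:stat1corrineq}). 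Translation invariance moves $R$ to any site hyperplane $\{x_i=k\}$.

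With both (\ref{eq:stat1corrineq}) and its site-reflection version above available, the iteration-plus-contradiction scheme that proved Theorem \ref{theo:monotonicity} applies essentially verbatim but now without any parity restriction. In the projection step, a coordinate $z_j$ is eliminated by reflecting through $\{x_j=z_j/2\}$: this plane bisects edges when $z_j$ is odd and passes through sites when $z_j$ is even, and in each case we obtain a direct (non-averaged) comparison. In the monotonicity step, the passage $n\to n+1$ is handled analogously, selecting the reflection type whose bisector has the matching parity. The main obstacle lies in the second step: rigorously establishing the site-reflection Cauchy-Schwarz inequality, and in particular the positivity $\langle \varphi_x\cdot\varphi_{\Theta(x)}\rangle_{L,N,\beta}\geq 0$ needed to pass from Cauchy-Schwarz to AM-GM (itself a direct consequence of applying RP to $f=\varphi_x$). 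Once this is in place, the combinatorial extension of the iteration of Theorem \ref{theo:monotonicity} to alternate between the two reflection families is routine.
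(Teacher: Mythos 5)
Your overall strategy is the same as the paper's: keep the edge-reflection results of Theorem \ref{theo:monotonicity} for the odd-parity cases, and supplement them with reflection positivity of the spin $O(N)$ Gibbs measure through planes of \emph{sites} to handle the even-parity cases (this is exactly the paper's Proposition \ref{prop:RPsites}, which it quotes as classical, and the two-site consequence you derive is the paper's inequality \eqref{eq:averagespin2sites}). Your justification of site RP (no edge of $\Ecal_L$ crosses the fixed locus, so the Hamiltonian factorises given the spins on the two fixed hyperplanes) is the right one, and the first inequality of the theorem does follow from a single application of the two-site inequality with the plane orthogonal to $\be_i$ at height $z_i/2$ together with translation invariance. (Minor point: that single reflection projects $z$ onto the $i$-th axis in one step; it does not ``eliminate one coordinate at a time,'' and reflecting through $\{x_j=z_j/2\}$ for $j\neq i$ would project onto the wrong axis.)

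There is, however, a genuine gap in the monotonicity step. The quantity $\langle\varphi_o\cdot\varphi_{y+n\be_i}\rangle+\langle\varphi_o\cdot\varphi_{n\be_i}\rangle$ is (up to a factor $2N$) the \emph{averaged} two-point function $G_L^{\be_i}(y+n\be_i)$ of \eqref{eq:projectedtwopoint}, and for $y\neq o$ its convexity in $n$ cannot be extracted from the two-site inequalities \eqref{eq:stat1corrineq} and your site analogue alone: applying either of them to the pair $\{o,\,y+n\be_i\}$ with a plane orthogonal to $\be_i$ produces only on-axis correlations on the right-hand side (the transverse part of $y$ cancels under translation invariance), and never the off-axis terms $G_L(y+(n\pm1)\be_i)$ that appear in the convexity inequality. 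The paper's proof of this step instead uses reflection positivity applied to observables supported on \emph{four} sites $\{o,\,z,\,(z\cdot\be_i)\be_i,\,z-(z\cdot\be_i)\be_i\}$ — Proposition \ref{prop:superharmonicity} (edge reflections, four-point field $\bh$) for one parity and Proposition \ref{prop:general sufficientsites} via Proposition \ref{prop:RPaveragespins} (site reflections, $f=\prod_{x\in A\cap\T_L^+}(1+\eta\varphi^1_x)$, Taylor expansion in $\eta$, and $\langle\varphi^1_x\rangle=0$) for the other — before running the iteration-plus-contradiction argument on $G_L^{\be_i}$. Your RP statement is strong enough to supply these four-point inequalities, but your proposal as written asserts that the two-point inequalities let the scheme of Theorem \ref{theo:monotonicity} run ``essentially verbatim,'' which is only true for $y=o$; for general $y$ with $y\cdot\be_i=0$ you must also establish the four-point (averaged) versions for both reflection families.
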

From the previous theorem we deduce that the two-point function of the spin O(N) model with arbitrary $N \in \mathbb{N}$ satisfies a relation which is analogous to (\ref{eq:monotonicity2}) 
at all sites (not just those which are `odd').

%\begin{rem}\label{remark:essentialityloops}
%We emphasise that representing the spin $O(N)$ model as %system of random interacting paths is not really essential for the derivation of Theorem
%\ref{theo:monotonicityspinO(N)}. 
%\end{rem}

Our monotonicity properties imply that, if the lower bound  in (\ref{eq:liminfpositive})  is close enough to one, which in the specific case of the spin O(N) model holds for large enough $\beta$, then the two-point correlation function is point-wise positive, as the next theorem states.
\begin{thm}[\textbf{Point-wise uniform positivity}]\label{theo:pointwise}
Let $\langle \varphi_o \cdot \varphi_z \rangle _{L, N, \beta}$ be the spin-spin correlation of the spin O(N) model on a torus of side length $L \in \mathbb{N}$, with $N \in \mathbb{N}_{>0}$, and inverse temperature $\beta$.
Suppose that $d \geq 3$, where $d$ is the dimension of the torus. Then, there exists $\beta_0^{\prime} < \infty$ such that if $\beta > \beta_0^{\prime}$, there exists $C = C(\beta,d,N) >0$ and $L_0 < \infty$ such that for any even $L > L_0$,
\begin{equation}\label{eq:ourbound}
\langle  \varphi_o \cdot  \varphi_z \rangle_{L, N, \beta}  \geq C \quad \forall z\,:\, \|z\|_{\infty} < \tfrac{L}{8}.
\end{equation}
\end{thm}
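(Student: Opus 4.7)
The plan is to combine the Fr\"ohlich--Simon--Spencer bound (\ref{eq:classicalstatement}) with the site monotonicity of Theorem \ref{theo:monotonicityspinO(N)}, using an iterated averaging argument for points interior to the positive octant and a Cauchy--Schwarz detour for points on coordinate-axis planes. Writing $\epsilon := \beta_0/\beta + o_L(1)$, passing from the $\liminf$ in (\ref{eq:classicalstatement}) to a finite-$L$ estimate gives $|\T_L|^{-1}\sum_{z \in \T_L}\langle\varphi_o\cdot\varphi_z\rangle \geq 1 - \epsilon$ for all sufficiently large $L$. The first part of Theorem \ref{theo:monotonicityspinO(N)} yields $\langle\varphi_o\cdot\varphi_z\rangle \leq \tfrac{1}{d}\sum_i \langle\varphi_o\cdot\varphi_{z_i\be_i}\rangle$; summing over $z \in \T_L$ and using coordinate-symmetry of the torus, I obtain $\sum_{n=0}^{L-1}a(n) \geq L(1-\epsilon)$, where $a(n):=\langle\varphi_o\cdot\varphi_{n\be_1}\rangle$. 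Since $a$ is non-increasing on $[0,L/2]$ (Theorem \ref{theo:monotonicityspinO(N)}, second part with $y=o$) and $a(n)=a(L-n)$, a standard monotone-rearrangement argument gives $1-a(m) \leq \tfrac{L\epsilon}{2(L/2-m)}$, so in particular $a(m) \geq 1 - C_d\,\epsilon$ uniformly for all $m \in [1, 3L/8]$.

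For the main estimate, fix $z^* \in \T_L$ with $\|z^*\|_\infty < L/8$; by torus reflection symmetry I may assume each $z^*_i \geq 0$. Consider first the \emph{interior case} where all $z^*_i \geq 1$. Iterating Theorem \ref{theo:monotonicityspinO(N)}'s second part one coordinate at a time yields
\begin{equation*}
F(z^*) := \langle\varphi_o\cdot\varphi_{z^*}\rangle + \sum_{i=1}^d a(z^*_i) \;\geq\; F(z^* + k), \qquad \forall k \in \mathcal{K} := \{0,1,\ldots,K_0\}^d,
\end{equation*}
with $K_0 := \lfloor L/4\rfloor$, valid since each $z^*_i+k_i$ lies in $[1, 3L/8]\subset(0,L/2]$. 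Averaging over $k\in\mathcal{K}$ and rearranging,
\begin{equation*}
\langle\varphi_o\cdot\varphi_{z^*}\rangle \;\geq\; \frac{1}{|\mathcal{K}|}\sum_{k\in\mathcal{K}}\langle\varphi_o\cdot\varphi_{z^*+k}\rangle \;-\; \sum_{i=1}^d\bigl(a(z^*_i)-\bar a_i\bigr),
\end{equation*}
where $\bar a_i := \tfrac{1}{K_0+1}\sum_{j=0}^{K_0}a(z^*_i+j)$. The average of correlations over the cube is at least $1 - \epsilon\,|\T_L|/|\mathcal{K}| \geq 1 - 4^d\epsilon$ (by the FSS bound combined with $\langle\varphi_o\cdot\varphi_z\rangle\leq 1$), while each defect satisfies $a(z^*_i) - \bar a_i \leq a(z^*_i) - a(z^*_i+K_0) \leq 1 - a(3L/8) \leq C_d\,\epsilon$ by the first paragraph. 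Hence $\langle\varphi_o\cdot\varphi_{z^*}\rangle \geq 1 - C'(d)\,\epsilon$ for every interior $z^*$.

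For $z^* \in [0,L/8)^d$ with some $z^*_k=0$, set $\tilde z^* := z^* + \sum_{k:\,z^*_k=0}\be_k$, which lies in $[1,L/8)^d$ and therefore satisfies the interior bound $\langle\varphi_o\cdot\varphi_{\tilde z^*}\rangle \geq 1 - C'(d)\,\epsilon$. A Cauchy--Schwarz estimate using $|\varphi_x|=1$ gives
\begin{equation*}
\bigl|\langle\varphi_o\cdot\varphi_y\rangle - \langle\varphi_o\cdot\varphi_{y+\be_k}\rangle\bigr| \leq \sqrt{\langle|\varphi_y-\varphi_{y+\be_k}|^2\rangle} = \sqrt{2(1-a(1))} \leq 2\sqrt{\epsilon}\,,
\end{equation*}
using $a(1) \geq 1-\epsilon$ from the first paragraph. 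Iterating over the at most $d$ zero coordinates of $z^*$, $|\langle\varphi_o\cdot\varphi_{z^*}\rangle - \langle\varphi_o\cdot\varphi_{\tilde z^*}\rangle| \leq 2d\sqrt{\epsilon}$, so $\langle\varphi_o\cdot\varphi_{z^*}\rangle \geq 1 - C'(d)\,\epsilon - 2d\sqrt{\epsilon}$. Taking $\beta_0'$ large enough so that $\epsilon$ is small enough makes the right-hand side exceed $\tfrac12$, proving the theorem with $C=\tfrac12$. The main obstacle is that the sum-monotonicity of Theorem \ref{theo:monotonicityspinO(N)} is available only for $n\in(0,L/2]$, so the clean iteration applies only when every $z^*_i$ is strictly positive; the Cauchy--Schwarz detour handles the boundary at the cost of a $\sqrt{\epsilon}$ loss, which remains comfortably absorbable in the final estimate.
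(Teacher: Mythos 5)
Your proof is correct, but it takes a genuinely different route from the paper's in both of its key steps. The paper first locates, by a counting/contradiction argument (Lemma \ref{lem:positiveaverageimplies}), a single site $z_L$ deep inside the torus where the correlation is close to its maximum, and then propagates positivity from $z_L$ to every site of $\mathbb{Q}_{z_L}\cap\Hcal_L$ by iterating the inequality $\langle\varphi^1_o\varphi^1_{z^i_{D_i}}\rangle\geq 2\langle\varphi^1_o\varphi^1_{z^i_0}\rangle-1/N$ along a $d$-segment lattice path (Proposition \ref{prop:goinginside}), losing a factor $2$ per coordinate; you instead telescope the sum-monotonicity of Theorem \ref{theo:monotonicityspinO(N)} coordinate-by-coordinate and average over a cube of side $\sim L/4$, feeding in the Ces\`aro bound directly, which avoids the existence-of-a-good-point step at the cost of a comparable $4^d\epsilon$ loss. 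The more substantial divergence is the treatment of sites with a vanishing coordinate (off the axes), where the monotonicity is unavailable because the $n=0$ endpoint is excluded: the paper performs a surgery in the random-path representation (Proposition \ref{prop:goinginsideH}), removing the last link of the $1$-walk and comparing partition functions on $\T_L$ and $\T_L\setminus\{x\}$, which yields only the degenerate factor $(\beta e^{-2d\beta}/N)^{d-2}$; you instead use the elementary estimate $|\langle\varphi_o\cdot\varphi_y\rangle-\langle\varphi_o\cdot\varphi_{y+\be_k}\rangle|\leq\sqrt{2(1-\langle\varphi_o\cdot\varphi_{\be_k}\rangle)}$, entirely in the spin language, which costs only $O(\sqrt{\epsilon})$. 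Your version is therefore both simpler at this step and quantitatively stronger — it gives $\langle\varphi_o\cdot\varphi_z\rangle\geq 1-O(\sqrt{\epsilon})$ uniformly on $\mathbb{B}_{L/8}$, whereas the paper's constant on the hyperplane sites tends to zero as $\beta\to\infty$ — though, unlike the paper's path-surgery argument, it is specific to the spin model and would not transfer to the general random path setting. The only blemishes are cosmetic constants (e.g.\ $1-a(1)\leq C_d\epsilon$ rather than $\epsilon$ in the Cauchy--Schwarz step), which do not affect the conclusion.
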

Note that the constant $1/8$ which appears in the right-hand side of (\ref{eq:ourbound}) could be replaced by any other constant in $(0,\frac{1}{4})$.

\section{Reflection positivity}
\label{sec:RP}
The main purpose of this section is to introduce 
the method of reflection positivity. The reader is encouraged to read the notes of Biskup \cite{Biskup} or the book of Friedli and Velenik \cite[Chapter 10]{FriedliVelenik}
for an introduction on this method.
From now on the underlying graph  $\mathcal{G}$ will be a torus of side length $L$, 
$(\T_L, \mathcal{E}_L)$ in dimension $d \geq 2$,
with edges connecting nearest-neighbour vertices.
We identify $\T_L=\Z^d/L\Z^d$ with the set 
$[0,L)^d\cap\Z^d$.
We denote the \textit{origin},
corresponding to the vertex $(0, 0, \ldots, 0) \in \T_L$,  by $o \in \T_L$.
Throughout the section $N \in \mathbb{N}_{>0}$ and 
$\beta \in \mathbb{R}_{\geq 0}$ will be arbitrary but fixed, $U$ will be a weight function as in Definition \ref{def:measure},
 $L$ will be an even integer and we will assume that the measure 
$\mu_{L, N, \beta, U}$ which was defined in Definition \ref{def:measure}
has finite mass.

We now introduce the notion of domain and restriction and, after that, we introduce reflections. Intuitively, a function with domain $D \subset \mathcal{V}$ is a function which depends only on how $w \in \mathcal{W}_{\mathcal{G}}$ looks in $D$ or in a subset of $D$. More precisely, the function might only depend on how many links emanate from the vertices of $D$, on the direction in which they emanate, on which colour they have and on the pairings on vertices in $D$.

%\vspace{-0.5cm}

\paragraph{Domains.}
A function $f : \mathcal{W}_{\mathcal{G}}\to \R$ has \textit{domain} $D \subset \mathcal{V}$ if for any
pair of configurations $w = (m, c, \pi), w^{\prime} = (m^{\prime}, c^{\prime}, \pi^{\prime}) \in \mathcal{W}_{\mathcal{G}}$ such that 
$$
\quad \forall e \in \mathcal{E} :  \,e\cap D\neq \emptyset, \quad \forall z \in D, \quad m_e = m_e^{\prime} \quad c_e = c_e^{\prime} \quad \pi_z = \pi_z^{\prime} 
$$
one has that 
$f(w) = f(w^{\prime})$.

%\vspace{-0.5cm}

\paragraph{Restrictions.}
For $w=(m, c, \pi)\in\Wcal_{\Gcal}$ define the \emph{restriction} of $w$ to $D\subset \mathcal{V}$, $w_D=(m_D, c_D, \pi_D)$ with 
$c_D \in \mathcal{C}(m_D)$,
$\pi_D\in \Pcal_\Gcal(m_D, c_D)$, by 
  \vspace{-0.1cm}
 \begin{enumerate}[i)]
  \vspace{-0.1cm}
 \item $(m_D)_e = m_e$ for any edge $e \in \mathcal{E}$ which has at least one end-point in $D$ and $(m_D)_e =0$ otherwise,
  \vspace{-0.1cm}
  \item $(c_D)_e = c_e$ for any edge $e$ which has at least one end-point in $D$ and $(c_D)_e$ is the empty function otherwise,
 \vspace{-0.1cm}
  \item $(\pi_D)_x = \pi_x$ for any $x \in D$, and for $x\in \partial^e(D)$ we set $(\pi_D)_x$ as the pairing which leaves all links touching $x$ unpaired.
   \vspace{-0.1cm}
\end{enumerate}

%\vspace{0.5cm}

\paragraph{Reflection through edges.}
Consider a plane $R$, which is orthogonal to one of the cartesian vectors $\boldsymbol{e_i}$,
$i \in \{1, \ldots, d\}$, and intersects the midpoint of
$L^{d-1}$ edges of the graph $(\T_L,\Ecal_L)$,  i.e.
$R = \{z \in \mathbb{R}^d \, \, : \, \, z \cdot \boldsymbol{e_i} = u      \}$, for some $u$ such that  $u - 1/2 \in \mathbb{Z} \cap [0,L)$
and $i \in \{1, \ldots, d\}$.
See Figure \ref{Fig:reflectionsexample} for an example. Given such 
a plane $R$, we denote by 
$\Theta : \T_L \rightarrow \T_L$  the reflection operator which reflects the vertices of  $\,\T_L$ with respect to $R$, i.e.
for any $x = (x_1, x_2, \ldots, x_d) \in \T_L$, 
\begin{equation}
\Theta(x)_k : = 
\begin{cases}
x_k & \mbox{ if } k \neq i, \\ 
2u - x_k   \mod  L & \mbox{ if } k = i.
\end{cases}
\end{equation}
Let $\T_L^+, \T_L^- \subset \T_L$ be the corresponding partition of the torus into two disjoint halves
such that $\Theta(\T_L^\pm) = \T_L^{\mp}$,
 as in Figure \ref{Fig:reflectionsexample}. 
 %Recall the definition of \emph{reflections} in Definition \ref{def:reflections}. For a reflection plane $R$ bisecting edges and separating $\T_L$ into two disjoint halves $\T_L^+$ and $\T_L^-$ 
Let $\Ecal^{+}_L, \Ecal^{-}_L \subset \Ecal_L$, be the set of edges $\{x,y\}$ with at least one of $x,y$ in $\T_L^+$ respectively $\T_L^-$. Moreover, let 
$\Ecal_L^R  := \Ecal^{+}_L \cap \Ecal^{-}_L$. Note that this set contains $2 L^{d-1}$ edges, half of them intersecting the plane $R$.
Further, let $\Theta  : \Wcal_\Gcal \rightarrow \Wcal_\Gcal$ denote the reflection operator reflecting the configuration $w=(m, c, \pi)$ with respect to $R$ (we commit an abuse of notation by using the same letter). More precisely we define $\Theta w=(\Theta m, \Theta c, \Theta \pi)$ where $(\Theta m)_{\{x,y\}}=m_{\{\Theta x,\Theta y\}}$, 
$(\Theta c)_{\{x,y\}}=c_{\{\Theta x,\Theta y\}}$, 
 $(\Theta \pi)_x=\pi_{\Theta x}$.
Given a function $f :\Wcal_{\Gcal}\to \mathbb{R}$, we also use the letter $\Theta$ to denote the reflection operator $\Theta$ which acts on $f$ as $\Theta f(w) : = f(\Theta w)$.
We denote by $\Acal^\pm$ the set of functions with domain $\T_L^\pm$ and denote by $\Wcal_\Gcal^\pm$ the set of configurations $w\in\Wcal_\Gcal$ that are obtained as a restriction of some $w^\prime\in\Wcal_\Gcal$ to $\T_L^\pm$. 

%\vspace{-0.5cm}

\paragraph{Projections.}
Finally, we denote by $\mathcal{W}_\Gcal^R$ the set of wire configurations $w = (m, c, \pi)$ 
such that  $m_e=0$ whenever $e\notin \Ecal_L^R$ and, for all $x\in\T_L$,
$\pi_x$ leaves all links touching $x$ unpaired.
We also denote by  $P_R:\Wcal_\Gcal\to \Wcal_\Gcal^R$ the projection such that, for any $w=(m,c, \pi)\in\Wcal_\Gcal$, 
$P_R(w) = (m^R, c^R, \pi^R)$ is defined as the wire configuration such that  $m^R_e = \mathbbm{1}_{e \in \mathcal{E}_L^R}m_e$ and $c^R_e =\mathbbm{1}_{e \in \mathcal{E}_L^R}  c_e$
and all links are unpaired at every vertex.

In the next proof we will use the following remark.

\begin{rem}\label{remark:observationidentif}
%Any  $w \in \mathcal{W}_{L}$ identifies the %restrictions $w_{\T_L^+}$ and $w_{\T_L^-}$ uniquely.
Recall the definition of restriction. Given a triplet of configurations $w^\prime \in \mathcal{W}_\mathcal{G}^R$, 
 $w_1 \in \mathcal{W}_{L}^{+}$,  $w_2 \in \mathcal{W}_{\mathcal{G}}^{-}$
such that  $P_R(w_1) = P_R(w_2) = w^\prime$,
there exists a unique configuration $w \in \mathcal{W}_\mathcal{G}$ such that 
$w_{\T_L^+} = w_1$, 
$w_{\T_L^-} = w_2$, 
$P_R(w) = w^\prime$.
This configuration is formed by concatenating $w_1$ and $w_2$
(concatenation includes  the pairing structures of each $w_j$).
\end{rem}

\begin{prop}\label{prop:RP1}
Consider the torus $(\T_{L},\Ecal_{L})$ for $L\in2\N$. Let $R$ be a reflection plane bisecting edges and  let $\Theta$ be the corresponding reflection operator. Consider the random path model with  $N \in \mathbb{N}_{>0}$,  inverse temperature $\beta \geq 0$ and weight function $U$ as in Definition \ref{def:measure}.
For any pair of functions
$f, g \in \mathcal{A}^+$, we have that,
\begin{enumerate}
\item[(1)] $\mu_{L,N, \beta, U} (f\Theta g)=\mu_{L,N, \beta, U} (g \Theta f)$,
\item[(2)] $\mu_{L,N, \beta, U} (f\Theta f)\geq 0$.
\end{enumerate}
From this we obtain that,
\begin{equation}\label{eq:RPCS}
\mu_{L,N, \beta, U} \big (   f \,  \Theta g  \big )
\leq 
\mu_{L,N, \beta, U} \big (   f  \,  \Theta f  \big )^{\frac{1}{2}}
\, \, 
\mu_{L,N, \beta, U} \big (   g \,  \Theta g  \big )^{\frac{1}{2}}.
\end{equation}
\end{prop}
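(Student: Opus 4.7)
The plan is to establish a factorization of $\mu_{L,N,\beta,U}(w)$ along the decomposition $w \mapsto (w_{\T_L^+}, w_{\T_L^-})$ provided by Remark \ref{remark:observationidentif}, and then to recast $\mu(f\Theta g)$ as a bilinear form on functions of the ``seam'' configuration $w_R \in \Wcal_\Gcal^R$. Once this is done, (1)--(3) reduce to manifest symmetry, positivity, and standard Cauchy--Schwarz respectively.

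First I would partition $\Ecal_L = \Ecal_L^{++} \sqcup \Ecal_L^{--} \sqcup \Ecal_L^R$ according to whether both endpoints of an edge lie in $\T_L^+$, both in $\T_L^-$, or one in each half. For any $w \in \Wcal_\Gcal$ with restrictions $w_\pm = w_{\T_L^\pm}$ and seam $w_R := P_R(w_+) = P_R(w_-)$, I would set
\begin{align*}
V_+(w_+) &:= \prod_{e \in \Ecal_L^{++}} \frac{\beta^{m_e(w_+)}}{m_e(w_+)!} \prod_{x \in \T_L^+} U_x(w_+), \\
V_-(w_-) &:= \prod_{e \in \Ecal_L^{--}} \frac{\beta^{m_e(w_-)}}{m_e(w_-)!} \prod_{x \in \T_L^-} U_x(w_-), \\
V_R(w_R) &:= \prod_{e \in \Ecal_L^R} \frac{\beta^{m_e(w_R)}}{m_e(w_R)!}.
\end{align*}
The crucial point is that, because the plane $R$ bisects edges rather than passing through vertices, for every $x \in \T_L^+$ the triple $(\mathbf{n}_x(w), \mathbf{u}_x(w), t_x(w))$ on which $U_x(w)$ depends is already determined by $w_+$ alone: all links incident to $x$ sit on edges of $\Ecal_L^{++} \cup \Ecal_L^R$ and the pairing $\pi_x$ is part of $w_+$. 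Hence $U_x(w) = U_x(w_+)$ for $x \in \T_L^+$ and symmetrically for $\T_L^-$, so together with Definition \ref{def:measure} this yields
$$
\mu_{L,N,\beta,U}(w) \;=\; V_+(w_+)\, V_-(w_-)\, V_R(w_R).
$$

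Next I would exploit the reflection symmetry of the torus and of $U$. Since $\Theta$ carries $\Ecal_L^{++}$ bijectively onto $\Ecal_L^{--}$ and sends each cross edge to itself, and since $U$ is the same function at every vertex, a direct check gives $V_-(w_-) = V_+(\Theta w_-)$ and $\Theta w_R = w_R$. For $f, g \in \mathcal{A}^+$, $f(w) = f(w_+)$ while $\Theta g(w) = g(\Theta w_-)$, so applying the change of variable $\hat w_+ := \Theta w_-$ (which satisfies $P_R(\hat w_+) = \Theta w_R = w_R$) together with Remark \ref{remark:observationidentif} gives
$$
\mu_{L,N,\beta,U}(f\, \Theta g) \;=\; \sum_{w_R \in \Wcal_\Gcal^R} V_R(w_R)\, \Phi_f(w_R)\, \Phi_g(w_R), \qquad \Phi_h(w_R) := \sum_{\substack{w_+ \in \Wcal_\Gcal^+ \\ P_R(w_+) = w_R}} h(w_+)\, V_+(w_+).
$$

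This master identity proves all three claims. The right-hand side is manifestly symmetric in $f$ and $g$, which is (1). Taking $g = f$ gives $\mu(f\Theta f) = \sum_{w_R} V_R(w_R)\, \Phi_f(w_R)^2 \geq 0$ because $V_R \geq 0$ when $\beta \geq 0$, which is (2). Finally, $(h_1, h_2) \mapsto \sum_{w_R} V_R(w_R)\, h_1(w_R)\, h_2(w_R)$ is a positive semidefinite bilinear form on real functions on $\Wcal_\Gcal^R$, so standard Cauchy--Schwarz applied to $\Phi_f$ and $\Phi_g$ yields (\ref{eq:RPCS}). I expect the only nontrivial work to be the factorization step, where the subtlety is purely bookkeeping that is made available by the fact that $R$ lies strictly between vertices, so that no $U_x$ factor ever straddles the two halves.
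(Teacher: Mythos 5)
Your proof is correct and follows essentially the same route as the paper's: condition on the seam configuration $P_R(w)$, factorise the weight across the reflection plane via Remark \ref{remark:observationidentif}, and use the reflection symmetry of the torus to turn the two half-sums into a square. The only difference is organisational --- the paper proves (1) directly from $\mu(w)=\mu(\Theta w)$ and deduces \eqref{eq:RPCS} abstractly from (1) and (2), while you package all three claims into one master identity (and absorb the seam-edge weights into a separate factor $V_R$ rather than dividing them out) --- but the underlying factorisation is identical.
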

\begin{proof}
Throughout the proof we will write $\mu=\mu_{L,N, \beta, U}$. First we note that \eqref{eq:RPCS} follows in the standard way from (1) and (2), since 
(1) and (2) show that we have a positive semi-definite, symmetric bilinear form. %More precisely, note that (1) and (2) give us the following inequality for any $r\in\R$
%\begin{equation}
%0\leq \mu\big((rf+g)\Theta(rf+g)\big)=r^2\mu(f\Theta f)+2r\mu(f\Theta g)+\mu(g\Theta g),
%\end{equation}
%and that this quadratic polynomial has at most one root, hence the discriminant is non-negative.

To prove (1) we note that, by Definition \ref{def:measure} and due to the symmetries of the torus, $\mu(w)=\mu(\Theta w)$ for any $w \in \mathcal{W}_\mathcal{G}$. Hence
\begin{equation}
\begin{aligned}
 \mu(f\Theta g)&=\sum_{w\in\Wcal_\Gcal}f(w)\Theta g(w)\mu(w)=\sum_{\Theta w\in\Wcal_\Gcal}f(\Theta w)\Theta g(\Theta w)\mu(w)
 \\
 &=\sum_{\Theta w\in\Wcal_\Gcal} g(w)\Theta f(w)\mu(w)=\sum_{ w\in\Wcal_\Gcal} g(w)\Theta f(w)\mu(w)=\mu(g\Theta f).
 \end{aligned}
 \end{equation}

For (2) we condition on the number of links in $w$ crossing the reflection plane and on their colours. 
We will  write 
 $m_{e}(w)$ for the number of links parallel to the edge $e$ of the configuration $w \in \mathcal{W}$.
We write
\begin{equation}\label{eq:decomposition}
\mu(f\Theta f)=\sum_{w \in\Wcal_\Gcal^R}\mu(f\,|\,w),
\end{equation}
where, for any $w^{\prime}   \in\Wcal_\Gcal^R$,
\begin{equation}
\begin{aligned}
\mu(f\,|\,w^{\prime}):=&\sum_{\substack{w\in\Wcal_\Gcal\\ P_R(w)=w^{\prime}}}f(w)\Theta f(w) \mu(w)
\\
=& \Big( \prod_{e\in\Ecal_L^R} \, \, \frac{m_e(\omega^{\prime})!}{\beta^{m_e(\omega^{\prime})}} \, \Big )  \, \,  \sum_{\substack{w\in\Wcal_\Gcal\\ P_R(w)=w^{\prime}}}f(w) \, 
\, \, \Big ( \prod_{e\in \Ecal_L^+}   \,\frac{\beta^{m_e(w)}}{m_e(w)!} \Big )\, \,  \Big (  \prod_{x\in\T_L^+}  \, U_x(w) \Big ) 
\\
&\qquad\qquad\qquad\qquad\qquad\qquad\qquad\Theta f(w) \, \, \Big (   \prod_{e\in\Ecal_L^-}\frac{\beta^{m_e(w)}}{m_e(w)!} \, \, \Big ) \, \, 
\Big (   \prod_{x\in\T_L^-} U_x(w)\Big ).
\end{aligned}
\end{equation}
Now, any $w\in\Wcal_\Gcal$ such that $P_R(w)=w^{\prime}$ uniquely defines $w_{\T_L^{\pm}}$, the restriction of $w$ to $\T_L^\pm$.
Thus, from Remark \ref{remark:observationidentif} we deduce that we can split the sum over $w\in\Wcal_\Gcal$ with $P_R(w)=w^{\prime}$ as the product of two independent sums and continue:
\begin{equation}
\begin{aligned}
\mu(f\,|\,w^{\prime})=& \Big ( \prod_{e\in\Ecal_L^R} \, \, \frac{m_e(\omega^{\prime})!}{\beta^{m_e(\omega^{\prime})}}  \Big ) \,  \bigg ( \sum_{\substack{w_1\in\Wcal_\Gcal^+\\ P_R(w_1)=w^{\prime}} }f(w_1) \, 
\, \, \Big ( \prod_{e\in \Ecal_L^+ }   \,\frac{\beta^{m_e(w_1)}}{m_e(w_1)!} \Big )\, \,  \Big (  \prod_{x\in\T_L^+}  \, U_x(w_1) \Big )  \bigg )
\\
&\qquad\qquad\qquad\qquad\qquad\qquad\qquad
\bigg ( \sum_{\substack{w_2\in\Wcal_\Gcal^-\\ P_R(w_2)=w^{\prime}} } \Theta f(w_2) \, 
\, \, \Big ( \prod_{e\in \Ecal_L^- }   \,\frac{\beta^{m_e(w_2)}}{m_e(w_2)!} \Big )\, \,  \Big (  \prod_{x\in\T_L^+}  \, U_x(w_2) \Big ) \bigg ) \\
=& 
\Big ( \prod_{e\in\Ecal_L^R} \, \, \frac{m_e(\omega^{\prime})!}{\beta^{m_e(\omega^{\prime})}}  \Big ) \,  
\bigg ( 
\sum_{\substack{w_1\in\Wcal_\Gcal^+\\ P_R(w_1)=w^{\prime}} }f(w_1) \, 
\, \, \Big ( \prod_{e\in \Ecal_L^+ }   \,\frac{\beta^{m_e(w_1)}}{m_e(w_1)!} \Big )\, \,  \Big (  \prod_{x\in\T_L^+}  \, U_x(w_1) \Big ) \bigg )^2 .
\end{aligned}
\end{equation}
The last equality holds true by the  symmetry of the torus. Since the last expression is non-negative, from (\ref{eq:decomposition}) we conclude the proof of (2) and, thus, the proof of the proposition.
\end{proof}

Recall that $u^i_x(w)$ denotes the number of $i$-links touching $x \in \T_L$ which are not paired to any other link at $x$ (i.e. the number of walks with colour $i$ with an end-point at $x$). 
For any 
field
$\boldsymbol{h} = (h_x)_{x \in \T_L}\in \R^{\T_L}$, we define the very important quantity,
\begin{equation}
Z^{\text{field}}_{L,N, \beta, U}( \boldsymbol{h} ) : = \mu_{L,N, \beta, U} \bigg (   \prod_{x \in \T_L} h_x^{u^1_x(w)}  \bigg ),
\end{equation}
using the convention that $0^0=1$.
We will always assume that the weight function  $U$ is such that  $Z^{\text{field}}_{L,N, \beta, U}( \boldsymbol{h} )$ is finite for any  real  vector $\boldsymbol{h} = (h_x)_{x \in \T_L}$ such that $h_x \leq 1$
for any $x \in \T_L$.
This assumption is certainly fulfilled in all the special cases considered above. %thanks to the term $1/m_e!$ in the Definition \ref{def:measure}.
Note that $Z^{\text{field}}_{L,N, \beta, U}(\boldsymbol{0})=Z_{L,N,\beta,U}(\emptyset)$ only involves configurations with no walks of colour 1.
For $\bh= (h_x)_{x \in \T_L}\in \R^{\T_L}$ we define its reflection $\Theta \bh=((\Theta h)_x)_{x \in \T_L}$ by $(\Theta h)_x=h_{\Theta x}$. 
We also define the related fields, $\bh^\pm=(h^\pm_x)_{x \in \T_L}$ by
\begin{equation}
h^{\pm}_{ {x}} =  
 \begin{cases}
{h}_{ {x}}   & \, \, \, \mbox{if} \,  \, x \in \T_L^{\pm},  \\
  {h}_{ {\Theta x}}  & \, \, \,  \mbox{if}   \,  \,  x \in \T_L^{\mp}. \\
\end{cases}
\end{equation}

\begin{prop}\label{prop:reflection positivity 2}
Under the same assumptions as Proposition \ref{prop:RP1}, for any field $\boldsymbol{h} \in \R^{\T_L}$ such that $Z^{\text{field}}_{L,N, \beta, U}( \boldsymbol{h} )$ is finite, we have that,
$$
Z^{\text{field}}_{L,N, \beta, U}( \boldsymbol{h} )
\leq Z^{\text{field}}_{L,N, \beta, U}( \boldsymbol{h}^+ )^{\frac{1}{2}} 
\, \, Z^{\text{field}}_{L,N, \beta, U}( \boldsymbol{h}^- )^{\frac{1}{2}}.
$$
\end{prop}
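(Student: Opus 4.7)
The plan is to express $Z^{\text{field}}_{L,N,\beta,U}(\boldsymbol{h})$ as $\mu_{L,N,\beta,U}(f\,\Theta g)$ for suitably chosen $f,g \in \mathcal{A}^+$, and then invoke the Cauchy--Schwarz bound \eqref{eq:RPCS} from Proposition \ref{prop:RP1}.

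Concretely, I would define
\begin{equation*}
f(w) := \prod_{x \in \T_L^+} h_x^{u^1_x(w)}, \qquad g(w) := \prod_{x \in \T_L^+} h_{\Theta x}^{u^1_x(w)},
\end{equation*}
using the convention $0^0 = 1$. The first step is to verify $f,g \in \mathcal{A}^+$. For $x \in \T_L^+$, the quantity $u^1_x(w)$ depends only on $(m_e)_{e \ni x}$, $(c_e)_{e \ni x}$ (all such $e$ lie in $\mathcal{E}_L^+$) and on the pairing $\pi_x$, so both $f$ and $g$ are functions with domain $\T_L^+$.

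Next I would use the elementary identity $u^1_{\Theta x}(w) = u^1_x(\Theta w)$, which follows from the very definition of $\Theta w$ as the reflected link--colour--pairing configuration. This gives
\begin{equation*}
\Theta g(w) = g(\Theta w) = \prod_{x \in \T_L^+} h_{\Theta x}^{u^1_x(\Theta w)} = \prod_{x \in \T_L^+} h_{\Theta x}^{u^1_{\Theta x}(w)} = \prod_{y \in \T_L^-} h_y^{u^1_y(w)},
\end{equation*}
and therefore
\begin{equation*}
f(w)\,\Theta g(w) = \prod_{x \in \T_L} h_x^{u^1_x(w)},
\end{equation*}
so that $\mu_{L,N,\beta,U}(f\,\Theta g) = Z^{\text{field}}_{L,N,\beta,U}(\boldsymbol{h})$. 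Exactly the same computation, with $h_{\Theta x}$ replaced by $h_x$ in $f$ (resp.\ $h_x$ replaced by $h_{\Theta x}$ in $g$), yields
\begin{equation*}
\mu_{L,N,\beta,U}(f\,\Theta f) = \prod_{x \in \T_L^+} h_x^{u^1_x(w)} \prod_{y \in \T_L^-} h_{\Theta y}^{u^1_y(w)} \text{ integrated against } \mu = Z^{\text{field}}_{L,N,\beta,U}(\boldsymbol{h}^+),
\end{equation*}
and analogously $\mu_{L,N,\beta,U}(g\,\Theta g) = Z^{\text{field}}_{L,N,\beta,U}(\boldsymbol{h}^-)$, by the very definition of the reflected fields $\boldsymbol{h}^\pm$. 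Substituting these three identifications into \eqref{eq:RPCS} yields the claimed inequality.

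No real obstacle is expected: the only points needing care are the verification of the domain property for $f$ and $g$ (trivial since edges and pairings at vertices of $\T_L^+$ are internal to $\T_L^+$) and the bookkeeping of the reflection action on the exponents, which follows from $u^1_{\Theta x}(w) = u^1_x(\Theta w)$. The finiteness hypothesis on $Z^{\text{field}}_{L,N,\beta,U}(\boldsymbol{h})$ (and on $Z^{\text{field}}_{L,N,\beta,U}(\boldsymbol{h}^\pm)$, which follows from the standing assumption when $\abs{h_x}\leq 1$; otherwise one can argue by monotone truncation or simply note both sides are defined in $[0,\infty]$) ensures all quantities in sight are finite.
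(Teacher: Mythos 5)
Your proof is correct and is essentially identical to the paper's: your $f$ is the paper's $f_{\boldsymbol{h}}^+$ and your $g$ is exactly $\Theta f_{\boldsymbol{h}}^-$, so the three identifications $\mu(f\,\Theta g)=Z^{\text{field}}(\boldsymbol{h})$, $\mu(f\,\Theta f)=Z^{\text{field}}(\boldsymbol{h}^+)$, $\mu(g\,\Theta g)=Z^{\text{field}}(\boldsymbol{h}^-)$ followed by the Cauchy--Schwarz inequality \eqref{eq:RPCS} is precisely the argument given there.
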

\begin{proof}
This is an application of Proposition \ref{prop:RP1}. Define $f_{\bh}(w):=   \prod_{x \in \T_{L}} h_x^{u^1_x(w)}$, then we can write $f_{\bh}(w)=f_{\bh}^+(w)f_{\bh}^-(w)$ where $f_{\bh}^\pm(w):=   \prod_{x \in \T_{L}^\pm} h_x^{u^1_x(w)}$. We have that $f_{\bh}^\pm\in\Acal^\pm$ as the function only depends on the pairings at sites in $\T_{L}^\pm$ and hence has domain $\T_{L}^\pm$. We have
\begin{equation}
f_{\bh}^\pm(w)\Theta f_{\bh}^\pm(w)= f_{\bh}^\pm(w) f_{\Theta \bh}^\mp(w)=\prod_{x\in\T_{L}}(h^\pm_x)^{u^1_x(w)}.
\end{equation}
Now applying Proposition \ref{prop:RP1} with $f=f_{\bh}^+$ and $g=\Theta f_{\bh}^-$ completes the proof.
\end{proof}

The central object of this section is the next weighted sum of two-point functions.
For any field of real values, $\boldsymbol{h} = (h_x)_{x \in \T_L}$, define
\begin{equation}\label{eq:Z2}
{Z}^{(2)}_{L,N, \beta, U}( \boldsymbol{h} ) : =
\sum_{\substack{x \in\T_{L}}}Z_{L,N, \beta,U}(x,y) \, h_x^2 \, \, + \, \, \frac{1}{2} \, \, 
\sum_{\substack{x,y \in\T_{L} : \\ x \neq y}}Z_{L,N, \beta,U}(x,y)h_xh_y,
\end{equation}
as a sum over weights of configurations with a single walk of colour 1 (and an arbitrary number of  walks of colour $2,\dots n$, $U$ allowing) weighted by the products of values of $\bh$ at the end points of the $1$-walk. 
Note that the second sum is in the right-hand side of (\ref{eq:Z2}) is over ordered pairs of distinct sites.  We have the following theorem. 

\begin{thm}\label{thm:RPaverage}
Under the same assumptions as Proposition \ref{prop:reflection positivity 2}, given an arbitrary vector field $\boldsymbol{h}$,
we have that,
\begin{align}
%\label{eq:inequality for domination for one path}
%{Z}^{(2)}_{N, \beta, U}( \boldsymbol{h} ) &  \leq 
%\, \, \big (  \, \,   {Z}^{(2)}_{N, \beta, U} (\bh^+) \, \,  {Z}^{(2)}_{N, \beta, U} (\bh^-)   \, \, \big)^{\frac{1}{2}}, \\
\label{eq:inequality for domination for one path}
{Z}^{(2)}_{L,N, \beta, U}( \boldsymbol{h} )  & \leq \frac{ {Z}^{(2)}_{L,N, \beta, U} (\bh^+) +  {Z}^{(2)}_{L,N, \beta, U} (\bh^-)   }{2}.
\end{align}
  \end{thm}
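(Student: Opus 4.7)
The plan is to deduce (\ref{eq:inequality for domination for one path}) from Proposition~\ref{prop:reflection positivity 2} by a Taylor-expansion argument in the magnitude of the field.

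The key step is to identify $Z^{(2)}_{L,N,\beta,U}(\bh)$ with the degree-two coefficient of $t\mapsto Z^{\text{field}}_{L,N,\beta,U}(t\bh)$ at $t=0$. Grouping configurations $w\in\Wcal_\Gcal$ by the number $k$ of $1$-walks they contain --- so that $\sum_x u^1_x(w)=2k$, since each walk contributes two endpoints counted with multiplicity --- one sees that only even powers of $t$ occur, that the $k=0$ stratum contributes $Z_{L,N,\beta,U}(\emptyset)$, and that the $k=1$ stratum reproduces (\ref{eq:Z2}) (with its first sum read as $\sum_x Z_{L,N,\beta,U}(x,x)\,h_x^2$). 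Hence
\begin{equation*}
Z^{\text{field}}_{L,N,\beta,U}(t\bh) = Z_{L,N,\beta,U}(\emptyset) + t^2\,Z^{(2)}_{L,N,\beta,U}(\bh) + O(t^4)\qquad \text{as }t\to 0.
\end{equation*}

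I would then apply Proposition~\ref{prop:reflection positivity 2} to $t\bh$ for small $t\in\R$; by linearity of the operation $\bh\mapsto\bh^{\pm}$, one has $(t\bh)^{\pm}=t\bh^{\pm}$, so
\begin{equation*}
Z^{\text{field}}_{L,N,\beta,U}(t\bh) \;\leq\; \bigl[Z^{\text{field}}_{L,N,\beta,U}(t\bh^+)\,Z^{\text{field}}_{L,N,\beta,U}(t\bh^-)\bigr]^{1/2}.
\end{equation*}
Assuming $Z_{L,N,\beta,U}(\emptyset)>0$, I would substitute the Taylor expansion above on both sides, factor $Z_{L,N,\beta,U}(\emptyset)$ out of each factor on the right, and use $\sqrt{(1+t^2 a)(1+t^2 b)}=1+\tfrac{t^2}{2}(a+b)+O(t^4)$ to obtain
\begin{equation*}
\bigl[Z^{\text{field}}(t\bh^+)\,Z^{\text{field}}(t\bh^-)\bigr]^{1/2} = Z_{L,N,\beta,U}(\emptyset) + \tfrac{t^2}{2}\bigl[Z^{(2)}_{L,N,\beta,U}(\bh^+)+Z^{(2)}_{L,N,\beta,U}(\bh^-)\bigr] + O(t^4).
\end{equation*}
The $t^0$-terms already agree, so comparing $t^2$-coefficients (divide by $t^2>0$ and send $t\to 0^+$) yields (\ref{eq:inequality for domination for one path}). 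The degenerate case $Z_{L,N,\beta,U}(\emptyset)=0$ requires only a minor modification: the same Taylor comparison then produces $Z^{(2)}(\bh)\leq[Z^{(2)}(\bh^+)\,Z^{(2)}(\bh^-)]^{1/2}$, which by AM--GM is stronger than the claim.

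The only non-routine step is the combinatorial identification of $Z^{(2)}$ as the $t^2$-coefficient of $Z^{\text{field}}(t\bh)$. The parity observation --- that odd powers of $t$ are absent because every walk has exactly two labelled endpoints --- is crucial: without it, a direct comparison of $t^2$-coefficients would be contaminated by $O(t)$ or $O(t^3)$ cross terms of indefinite sign, and the limit $t\to 0^+$ would not be conclusive. Once this identification is in hand, the remainder of the argument is a routine expansion of the reflection-positivity inequality of Proposition~\ref{prop:reflection positivity 2}.
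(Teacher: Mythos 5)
Your proof is correct and follows essentially the same route as the paper: expand $Z^{\text{field}}_{L,N,\beta,U}(\eta\bh)$ to second order in $\eta$, apply Proposition~\ref{prop:reflection positivity 2} to the rescaled field, and compare $\eta^2$-coefficients via the expansion of the square root. Your explicit parity observation (only even powers of $\eta$ occur) and your treatment of the degenerate case $Z_{L,N,\beta,U}(\emptyset)=0$ are refinements that the paper leaves implicit, but the argument is the same.
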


\begin{proof}
Throughout the proof we will write $Z^*_{L,N,\beta,U}=Z^*$ where $*\in\{\text{field},(2)\}$ for a lighter notation. Take $\bh\in\R^{\T_{L}}$ and $\eta>0$, we can expand $Z^{\text{field}}(\eta \boldsymbol{h} )$ into a series of terms in $\eta$, 
as follows
\begin{equation}
Z^{\text{field}}(\eta \boldsymbol{h} )=Z^{\text{field}}(\b0)+\eta^2 {Z}^{(2)}(\bh)+R_{\bh}(\eta),
\end{equation}
where $R_{\boldsymbol{h}}(\eta)$ is of order $O(\eta^4)$. $Z^{\text{field}}(\b0)$ involves weights of configurations with no walk of colour $1$.
The second term involves the weight of configurations with one unique walk of colour $1$. More precisely, the second term is the weight of configurations with precisely two distinct points  $x,y \in \T_L$ such that $u_x^1(w) = u_y^1(w) = 1$ and $u_z^1(w)=0$ for $z\in\T_L\setminus{\{x,y\}}$ or with one  point $x \in \T_L$  such that $u^1_x(w) = 2$ and $u_z^1(w)=0$ for $z\in\T_L\setminus{\{x\}}$.
Now using Proposition \ref{prop:reflection positivity 2} and the Taylor expansion ${(1+x)}^{1/2}=1 +x/2+O(x^2)$, we obtain that,
\begin{equation}
\begin{aligned}
&Z^{\text{field}}(\b0)+\eta^2{Z}^{(2)}(\bh)+R_{\bh}(\eta)
\\
&\leq \bigg[\big(Z^{\text{field}}(\b0)+\eta^2{Z}^{(2)}(\bh^+)+R_{\bh^+}(\eta)\big)\big(Z^{\text{field}}(\b0)+\eta^2{Z}^{(2)}(\bh^-)+R_{\bh^-}(\eta)\big)\bigg]^{\tfrac12}
\\
&= \bigg[\big(Z^{\text{field}}(\b0)^2+\eta^2Z^{\text{field}}(\b0)\big({Z}^{(2)}(\bh^+)+{Z}^{(2)}(\bh^-)\big)+O(\eta^4)\big)\bigg]^{\tfrac12}
\\
&=Z^{\text{field}}(\b0)+\eta^2\frac{{Z}^{(2)}(\bh^+)+{Z}^{(2)}(\bh^-)}{2}+O(\eta^4).
\end{aligned}
\end{equation}
As this inequality holds for arbitrarily small $\eta>0$ we see, by taking $\eta$ sufficiently small, that (\ref{eq:inequality for domination for one path}) holds, thus concluding the proof.
\end{proof}

\section{Proof of Theorems  \ref{theo:monotonicity},
\ref{theo:monotonicityspinO(N)}, and \ref{theo:pointwise}}
\label{sec:MonAndProofs}
The main purpose of this section is to prove Theorems \ref{theo:monotonicity}, \ref{theo:monotonicityspinO(N)}, 
and \ref{theo:pointwise}.
This section is divided into two subsections.
In the first subsection we will use the tools which have been introduced in Section \ref{sec:RP}
to prove  Theorem  \ref{theo:monotonicity},
which involves the random path model with arbitrary weight function.
In the second subsection we will consider only the spin $O(N)$ model and we will introduce a new type of reflection, \textit{reflection through sites},
which is not available for arbitrary weight functions but is in (at least) the case of the spin O(N) model.
Through this section we will always take our graph $\mathcal{G}$ to be the torus $\mathbb{Z}^d / L \mathbb{Z}^d$ with $d \geq 2$.
We fix $N \in \mathbb{N}_{>0}$, $\beta \geq 0$ and take $U$ such that the measure $\mu_{L, N, \beta, U}$, which was defined in Definition \ref{def:measure}, has finite mass for any $L \in 2 \mathbb{N}$.
For a lighter notation we will suppress some indices of our quantities of interest, keeping only their dependence on $L$. Also, for any $z \in  \T_L$, we will use the notation,
\begin{equation}
Z_L(z) := Z_L(o,z), \quad G_L(z) := G_L(o,z),
\end{equation}
(recall Definition \ref{def:partition functions}).

\subsection{Monotonicity for paths using reflection through edges and proof of Theorem \ref{theo:monotonicity}}

The next proposition is a consequence of Theorem \ref{thm:RPaverage} and states a convexity property for the  partition function $Z_L(z)$ for sites $z$ belonging to the cartesian axes.

\begin{prop}\label{prop:general sufficient}
Let $L \in 2 \mathbb{N}$,  let $z \in \T_L$ %be an arbitrary point such that $z \neq o$ 
and let $\boldsymbol{e}_i$ be a cartesian vector. The following inequality holds for any integer $q \in \mathbb{N}$ such that  $q + z\cdot\be_i$ is odd and such that $z\cdot\be_i-q, z\cdot\be_i+q \in (0, L)$,
\begin{equation}
\label{eq:firstrelation}
Z_{L}( z)  \leq \frac{1}{2}\bigg( Z_{L}\big( (z\cdot\be_i-q) \, \boldsymbol{e_i}  \big ) +  Z_{L}\big (  (z\cdot\be_i+q ) \, \boldsymbol{e_i}  \big )\bigg).
\end{equation}
\end{prop}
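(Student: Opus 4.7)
The plan is to obtain \eqref{eq:firstrelation} from a \emph{single} application of Theorem~\ref{thm:RPaverage} to the two-site test field $\bh=\delta_o+\delta_z\in\R^{\T_L}$, using a reflection $\Theta$ through a plane perpendicular to $\be_i$ that separates $o$ from $z$. The reason this should work cleanly is that $\bh$ is supported on only two sites, so the symmetrised fields $\bh^\pm$ will be supported on two sites each, and those sites will land on the $\be_i$-axis in exactly the form required on the right-hand side of \eqref{eq:firstrelation}.

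Concretely, I would set $u:=(z\cdot\be_i+q)/2$ and take $R=\{p\in\R^d:p\cdot\be_i=u\}$. The parity hypothesis that $q+z\cdot\be_i$ is odd makes $u$ a half-integer, so $u-\tfrac12\in\Z\cap[0,L)$; the hypothesis $z\cdot\be_i-q,\,z\cdot\be_i+q\in(0,L)$ forces $0<u<z\cdot\be_i<L$, so $R$ is a legal edge-bisecting reflection plane in the sense of Section~\ref{sec:RP}, and $o,z$ lie in opposite halves $\T_L^\mp,\T_L^\pm$. With this choice one reads off $\bh^+=\delta_z+\delta_{\Theta z}$ and $\bh^-=\delta_o+\delta_{\Theta o}$ from the definition of $\bh^\pm$. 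Since $Z_L(x,x)$ is independent of $x\in\T_L$ by vertex-transitivity of $\mu_{L,N,\beta,U}$ on the torus, writing $a:=Z_L(o,o)$ we get
\begin{equation*}
Z^{(2)}_L(\bh)=2a+Z_L(o,z),\quad Z^{(2)}_L(\bh^+)=2a+Z_L(z,\Theta z),\quad Z^{(2)}_L(\bh^-)=2a+Z_L(o,\Theta o).
\end{equation*}
Theorem~\ref{thm:RPaverage} and cancellation of $2a$ then yield the intermediate bound $Z_L(o,z)\leq\tfrac12\bigl(Z_L(z,\Theta z)+Z_L(o,\Theta o)\bigr)$.

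To finish, I would use translation invariance of $\mu_{L,N,\beta,U}$ together with the reflection symmetry $Z_L(o,w)=Z_L(o,-w)$, both inherited from the torus symmetries and from the local product structure in Definition~\ref{def:measure}. The differences $\Theta o - o=(z\cdot\be_i+q)\be_i$ and $\Theta z - z=-(z\cdot\be_i-q)\be_i$ therefore give $Z_L(o,\Theta o)=Z_L\bigl((z\cdot\be_i+q)\be_i\bigr)$ and $Z_L(z,\Theta z)=Z_L\bigl((z\cdot\be_i-q)\be_i\bigr)$, which is exactly \eqref{eq:firstrelation}. The one slightly delicate point in the whole argument is the choice of the reflection parameter $u=(z\cdot\be_i+q)/2$: the parity and range conditions imposed on $q$ are precisely tailored to make $u$ an admissible half-integer and to put $o,z$ on opposite sides of $R$. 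Once that is in place, everything else is a short bookkeeping computation.
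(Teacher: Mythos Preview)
Your proposal is correct and follows essentially the same approach as the paper: the paper also chooses the two-site field $\bh=\mathbbm{1}_{\{o,z\}}$, reflects in the plane perpendicular to $\be_i$ at the half-integer $u=\tfrac12(z\cdot\be_i+q)$ (the paper parametrises this as the midpoint of $\{p\be_i,(p+1)\be_i\}$ with $p=u-\tfrac12$), applies Theorem~\ref{thm:RPaverage}, and cancels the diagonal term $2Z_L(o)$ using translation invariance. Your labelling of $\T_L^\pm$ is opposite to the paper's, which is immaterial since the inequality is symmetric in $\bh^+$ and $\bh^-$.
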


\begin{proof}
%The sum of vectors refers to the metric of the torus, so for example $(L-1) \boldsymbol{e_1} + 2 \boldsymbol{e_1} = \boldsymbol{e_1}$, for $\be_1$ the first cartesian vector. 
Consider the field 
 $\boldsymbol{h} = (h_{x})_{x \in\T_{L}}$ given by
\begin{equation}\label{eq:hsinglewalk}
h_{x} = 
\begin{cases}
1 \mbox{ if } x \in \{ o, z\}   \\
0 \mbox{ otherwise} .
\end{cases}
\end{equation}
This means $\bh$ is zero except at two vertices,
which are represented by a square on the top of Figure \ref{Fig:RefPos}-left.
\begin{figure}[t]
\includegraphics[scale=1.2]{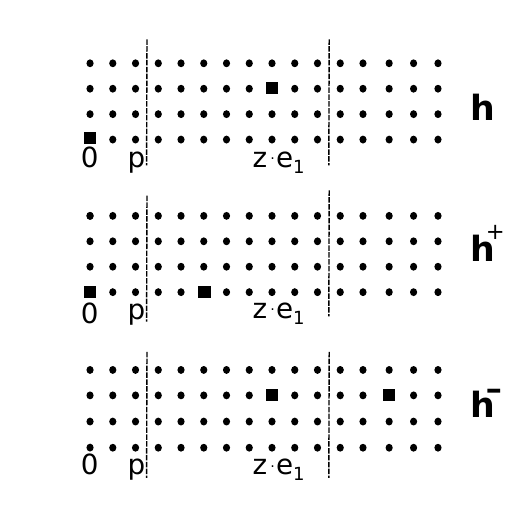}
\includegraphics[scale=1.2]{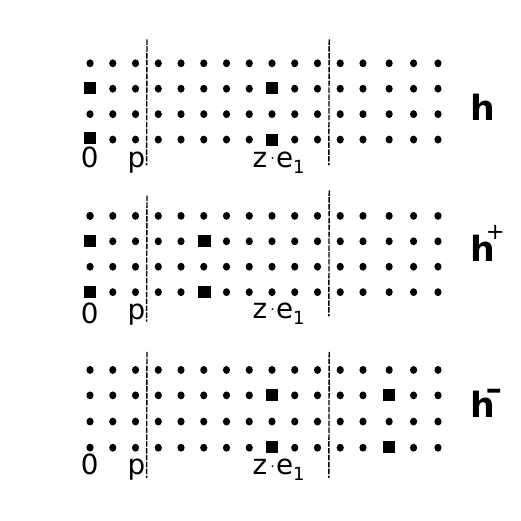}
\centering
\caption{We represent three slices of a torus of side length $L = 16$, the dashed lines separate $\T_L^+$ from $\T_L^-$. The vector fields equal 1 on the square vertices and 0 on the round vertices. \textit{Left:} the field $\boldsymbol{h}$ is chosen as in the proof of Proposition \ref{prop:general sufficient}. \textit{Right:} the field $\boldsymbol{h}$ is chosen as in the proof of Proposition \ref{prop:superharmonicity}.} 
\label{Fig:RefPos}
\end{figure}
Let $R$ be the reflection plane which is orthogonal to the vector $\boldsymbol{e_i}$ and which crosses the midpoint of the edge $\{p \,  \boldsymbol{e_i}, \, (p+1)  \boldsymbol{e_i}\}$, where
$$
p : = \frac{1}{2}(z\cdot\be_i-1+q),
$$
which is an integer since we assumed that $z\cdot\be_i+q$ is odd.
Moreover, since we assumed that 
 $z\cdot\be_i-q, z\cdot\be_i+q \in (0, L)$,
 we deduce that  $p$ satisfies $0\leq p<z\cdot \be_i < L$.
Thus, when we perform a reflection with respect to $R$, we obtain two fields $\boldsymbol{h^+}$ and $\boldsymbol{h^-}$
such that 
\begin{align}
h^+_{x} &  = 
\begin{cases}
1 \mbox{ if } x\in\{o,(2p+1)\be_i\}, \\
0 \mbox{ otherwise},
\end{cases} \\
h^-_{x} & = 
\begin{cases}
1 \mbox{ if } x\in\{z, z+(2p+1-2z\cdot\be_i)\be_i \}, \\
0 \mbox{ otherwise}.
\end{cases}
\end{align}
Note that the condition $0\leq p<z\cdot \be_i < L$ ensures that $\bh^+$ and $\bh^-$ are each non-zero at only two vertices.
% Note also that if we have $0<(2p+1),(2p+1-z\cdot\be_i)<L$ then this is sufficient to reflect $o$ and $z$ to any other vertex on $\T_L$ with the same $i^{th}$ coordinate.
For a representation of $\boldsymbol{h}$ and of the reflected fields see Figure \ref{Fig:RefPos}-left.
 Thus,  from translation invariance, reflection invariance and the definition (\ref{eq:Z2}), we deduce that,
\begin{align}\label{eq:cond1re}
 Z^{(2)}_{L}(\boldsymbol{h}) = &2 Z_{L}(o)+
 Z_{L}(z),
\\
\label{eq:cond2re}
Z^{(2)}_{L}(\boldsymbol{h^+}) = & 2 Z_{L}(o)+
 Z_{L}\big ( (2p+1)\be_i \big ),
\\
\label{eq:cond3re}
Z^{(2)}_{L}(\boldsymbol{h^-}) = & 2 Z_{L}(o)+
 Z_{L}\big ((2p+1-2z\cdot\be_i)\be_i \big ).
\end{align}
By applying Theorem \ref {thm:RPaverage},
 we conclude the proof after a cancellation of the terms 
$2 Z_{L}(o)$.
\end{proof}

The inequalities \eqref{eq:projectioninequality} and 
 \eqref{eq:projectioninequality2} in Theorem \ref{theo:monotonicity} are an immediate consequence of the previous proposition.
\begin{proof}[\textbf{Proof of \eqref{eq:projectioninequality} and \eqref{eq:projectioninequality2} in Theorem \ref{theo:monotonicity}}]
For \eqref{eq:projectioninequality}, we apply 
 Proposition \ref{prop:general sufficient} when $q=0$ and  $z \cdot \boldsymbol{e}_i$ is odd and then divide by $Z_L$.
 For \eqref{eq:projectioninequality2}, we apply 
 Proposition \ref{prop:general sufficient} when $q=1$ and  $z \cdot \boldsymbol{e}_i$ is even and positive and then divide by $Z_L$. 
 \end{proof}

For the remainder of this section we will work with the following sum of two point functions. For any $z \in \T_L$, and any unit vector $\boldsymbol{e}_i \in \mathbb{Z}^d$, we define the \textit{averaged two-point function},
\begin{equation}\label{eq:projectedtwopoint}
G_{L}^{\boldsymbol{e}_i}( z) := \frac{G_{L} (z  ) + G_{L}\big ( (z \cdot \boldsymbol{e}_i) \,  \boldsymbol{e_i} \,\big  )}{2}.
\end{equation}
In other words, given a point $z \in \T_L$ and a unit vector $\boldsymbol{e}_i \in \mathbb{Z}^d$, we average $G_L(z)$ with the value of $G_L$ evaluated  at the projection of $z$ 
onto the cartesian axis corresponding to $\boldsymbol{e}_i$. The reason why 
we introduce the averaged two-point function is that it satisfies a very useful monotonicity property. We remark that, if $z$ lies on the cartesian axis corresponding to $\boldsymbol{e}_i$, then the averaged two-point function $G_{L}^{\boldsymbol{e}_i}$ equals the two-point function, i.e,
$$
\forall k \in [0, L],  \quad G_{L}^{\boldsymbol{e}_i}( k \, \boldsymbol{e}_i) = G_{L}( k \,  \boldsymbol{e}_i).
$$
This means that, in this special case, the next statements also hold for the (non-averaged) two-point function.
The next proposition, applied with $q=2$, will lead to the monotonicity property of the averaged two-point function.
\begin{prop}\label{prop:superharmonicity}
For any $L \in 2 \mathbb{N}$, $q\in\N$ and $z \in \T_{L}$, such that %$z\neq o$,
$z \cdot \boldsymbol{e_i}+q$ is odd and
%$z \cdot \boldsymbol{e_i}, q \in (0, L)$, 
$z \cdot \boldsymbol{e_i}-q, z \cdot \boldsymbol{e_i}+q \in (0, L )$, the following inequality holds,
\begin{equation}\label{eq:super-harmonicity}
G_{L}^{\boldsymbol{e}_i} \big (  z + q \,  \boldsymbol{e_i} \big )  - 
G_{L}^{\boldsymbol{e}_i} \big (  z \big )
\geq 
G_{L}^{\boldsymbol{e}_i} \big ( z  \big )  - 
G_{L}^{\boldsymbol{e}_i}\big (  z - q \, \boldsymbol{e}_i \big ).
\end{equation}
\end{prop}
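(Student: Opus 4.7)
The plan is to apply Theorem~\ref{thm:RPaverage} with a four-vertex indicator field and an edge-bisecting reflection plane perpendicular to $\be_i$, chosen so that the averaged quantity $2 Z_L^{\be_i}(z) = Z_L(z) + Z_L((z\cdot\be_i)\be_i)$ arises naturally from $Z^{(2)}_L(\bh)$. Writing $a := z\cdot\be_i$ and $z_0 := z - a\be_i$, I will first dispose of the case $z = a\be_i$ (so $z_0 = o$), in which \eqref{eq:super-harmonicity} collapses to axial super-harmonicity $2Z_L(a\be_i) \leq Z_L((a-q)\be_i) + Z_L((a+q)\be_i)$, immediate from Proposition~\ref{prop:general sufficient}. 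Otherwise the four vertices $o$, $z$, $a\be_i$, $z_0$ are pairwise distinct, and I work with $\bh := \mathbf{1}_{\{o,\, z,\, a\be_i,\, z_0\}}$ together with the reflection plane $R$ bisecting the edge between $p\be_i$ and $(p+1)\be_i$ for $p := (a+q-1)/2$. The parity assumption on $a+q$ gives $p \in \Z$, while the range conditions $a-q > 0$ and $a+q < L$ place $o, z_0 \in \T_L^+$ and $z, a\be_i \in \T_L^-$.

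The reflected fields $\bh^\pm$ are supported on $\{o, z_0, \Theta o, \Theta z_0\} = \{o, z_0, (a+q)\be_i, z_0 + (a+q)\be_i\}$ and $\{z, a\be_i, \Theta z, \Theta(a\be_i)\} = \{z, a\be_i, z_0 + q\be_i, q\be_i\}$ respectively, since $\Theta x\cdot\be_i = (a+q) - x\cdot\be_i$ and $\Theta$ fixes the perpendicular components. Using translation invariance of $Z_L(x,y) = Z_L(y-x)$ together with the cartesian reflection symmetries of $Z_L$ (which make $Z_L(\pm z_0 + m\be_i)$ depend only on $|m|$), the six cross-pair contributions in each of the three fields reduce to just three distinct values. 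The key identity is that the pair $\{a\be_i, z_0\}$ in $\bh$ contributes $Z_L(z_0 - a\be_i) = Z_L(z_0 + a\be_i) = Z_L(z)$, doubling the $z$-term; on the reflected sides, the analogous ``diagonal'' pairs similarly double $Z_L(z \pm q\be_i)$. One obtains
\begin{align*}
Z^{(2)}_L(\bh)   &= 4 Z_L(o) + 2 Z_L(z_0) + 2 Z_L(z) + 2 Z_L(a\be_i), \\
Z^{(2)}_L(\bh^+) &= 4 Z_L(o) + 2 Z_L(z_0) + 2 Z_L(z+q\be_i) + 2 Z_L((a+q)\be_i), \\
Z^{(2)}_L(\bh^-) &= 4 Z_L(o) + 2 Z_L(z_0) + 2 Z_L(z-q\be_i) + 2 Z_L((a-q)\be_i).
\end{align*}

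Substituting into $Z^{(2)}_L(\bh) \leq \tfrac12\big(Z^{(2)}_L(\bh^+) + Z^{(2)}_L(\bh^-)\big)$ from Theorem~\ref{thm:RPaverage} and cancelling the common $4 Z_L(o) + 2 Z_L(z_0)$ from both sides yields
\begin{equation*}
2 Z_L(z) + 2 Z_L(a\be_i) \leq Z_L(z+q\be_i) + Z_L((a+q)\be_i) + Z_L(z-q\be_i) + Z_L((a-q)\be_i).
\end{equation*}
Dividing by $Z_L(\emptyset)$ and regrouping recognises this as $4 G_L^{\be_i}(z) \leq 2 G_L^{\be_i}(z+q\be_i) + 2 G_L^{\be_i}(z-q\be_i)$, which is \eqref{eq:super-harmonicity} after halving and rearranging. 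The main obstacle I anticipate is the bookkeeping that makes all the cross terms line up as claimed, both inside $Z^{(2)}_L(\bh)$ and inside the reflected $Z^{(2)}_L(\bh^\pm)$, relying only on translation invariance and the perpendicular reflection symmetries of $Z_L$; a close second is the verification that, under the torus identification, the four support vertices lie on the correct halves of $\T_L$ for the full range of allowed $(a,q)$, which is where the parity condition on $a+q$ and the range condition $a \pm q \in (0,L)$ are used.
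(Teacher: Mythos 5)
Your proposal is correct and follows essentially the same route as the paper's proof: the same case split (axial $z$ via Proposition~\ref{prop:general sufficient}), the same four-point indicator field on $\{o, z, (z\cdot\be_i)\be_i, z-(z\cdot\be_i)\be_i\}$, the same edge-bisecting plane at $p=\tfrac12(z\cdot\be_i+q-1)$, and the same application of Theorem~\ref{thm:RPaverage} followed by cancellation and division by $Z_L(\emptyset)$. Your expansions of $Z^{(2)}_L(\bh)$ and $Z^{(2)}_L(\bh^\pm)$ agree with the paper's \eqref{eq:fieldexpansions} (after rewriting $2p+1=z\cdot\be_i+q$ and using the reflection symmetry $Z_L((q-z\cdot\be_i)\be_i)=Z_L((z\cdot\be_i-q)\be_i)$), so there is nothing to add.
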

\begin{proof}
When $z \in \T_L$ is such that $z = (z  \cdot \boldsymbol{e}_i) \boldsymbol{e}_i$,
then the claim follows re-arranging the terms in Proposition \ref{prop:general sufficient} and dividing by $Z_L(\emptyset)$. Consider now a vertex $z \in \T_L$ satisfying our assumptions and not lying on the Cartesian axis $\boldsymbol{e}_i$.
We will prove that, under the assumptions of the proposition,
\begin{equation}\label{eq:goalpropposition}
Z_{L}((z\cdot \be_i)\be_i)+Z_{L}(z) \leq 
 \frac{1}{2}\big( Z_{L}((z\cdot\be_i+q)\be_i ) 
 +Z_{L}((z\cdot\be_i-q)\be_i)+Z_{L}(z+q\be_i)+Z_{L}(z-q\be_i)\big),
\end{equation}
from which (\ref{eq:super-harmonicity}) follows after dividing by $Z_L(\emptyset)$ and rearranging the terms (recall the definition of the two-point function which was introduced in Definition \ref{def:partition functions}).
As in Proposition \ref{prop:general sufficient} we need to make an appropriate choice of $\bh$. We choose the following field,
$$
h_{x} = 
\begin{cases}
1 \mbox{ if } x \in \{ o, z, (z\cdot\be_i)\be_i, z-(z\cdot\be_i)\be_i\},  \\
0 \mbox{ otherwise},
\end{cases}
$$
which is represented in Figure \ref{Fig:RefPos}-right.
As in Proposition \ref{prop:general sufficient}, define 
$$p: =\tfrac12 (z\cdot\be_i-1+q),$$
which is an integer since we assumed that 
$z \cdot \boldsymbol{e_i}+q$ is odd.
Let $R$ be the reflection plane which is orthogonal to the vector $\boldsymbol{e_i}$ and which crosses the midpoint of the edge $\{p \,  \boldsymbol{e_i}, \, (p+1)  \boldsymbol{e_i}\}$.
When we perform a reflection with respect to $R$, we obtain the fields $\boldsymbol{h^+}$ and $\boldsymbol{h^-}$
such that 
\begin{align}
h^+_{x} &  = 
\begin{cases}
1 \mbox{ if } x\in\{o,(2p+1)\be_i,z-(z\cdot \be_i)\be_i,z+(2p+1-z\cdot \be_i)\be_i\},  \\
0 \mbox{ otherwise},
\end{cases} \\
h^-_{x} & = 
\begin{cases}
1 \mbox{ if  } x\in\{z, (z\cdot\be_i)\be_i,(2p+1-z\cdot\be_i)\be_i,z+(2p+1-2z\cdot\be_i)\be_i\}, \\
0 \mbox{ otherwise}.
\end{cases}
\end{align}
Indeed, by our assumptions we deduce that
$0\leq p<z\cdot \be_i < L$ and this ensures that $\bh^+$ and $\bh^-$ are each non-zero at only four vertices.
See Figure \ref{Fig:RefPos}-right for an illustration of these fields.
Using translation and reflection invariance, we have that,
\begin{equation}\label{eq:fieldexpansions}
\begin{aligned}
Z^{(2)}_{L}(\bh)=& 4Z_{L}(o) + 2Z_{L}((z\cdot\be_i)\be_i)
+2Z_{L}(z-(z\cdot\be_i)\be_i) + 2Z_{L}(z),
\\
Z^{(2)}_{L}(\bh^+)=& 4Z_{L}(o) +2Z_{L}((2p+1)\be_i)
+2Z_{L}(z-(z\cdot\be_i)\be_i)+2Z_{L}(z+(2p+1-z\cdot\be_i)\be_i),
\\
Z^{(2)}_{L}(\bh^-)=& 4Z_{L}(o) +2Z_{L}((2p+1-2z\cdot\be_i)\be_i)
+2Z_{L}(z-(z\cdot\be_i)\be_i)+2Z_{L}(z-(2p+1-z\cdot\be_i)\be_i).
\end{aligned}
\end{equation}
Now using Theorem \ref{thm:RPaverage} we obtain  (\ref{eq:goalpropposition}) and this concludes the proof.
\end{proof}

An important consequence of this proposition is a proof of the second statement of Theorem \ref{theo:monotonicity}. Indeed, Proposition \ref{prop:superharmonicity} establishes a form of convexity for  $G_L^{\boldsymbol{e}_i}(y + n \boldsymbol{e}_i)$ as a function of $n$ when $y \cdot \boldsymbol{e}_i = 0$ and $n$ is an odd integer in $(0, L)$. In addition, this function is symmetric around $L/2$. Hence, it has to be non-increasing up to $L/2$ and non-decreasing afterwards.

%\begin{cor}\label{cor:monotonicity2}
%Consider any  $L \in 2 \, \mathbb{N}$,  $i \in \{1, \ldots, d\}$,  and $y \in \T_L$ such that $y \cdot \boldsymbol{e_i}  = %0$. We have that 
%$G_{L}^{\boldsymbol{e}_i} \big (  y + n  \,  \boldsymbol{e_i} \big )$ is a non-increasing function of the odd integers $n$ on $(0, L/2)$.
%\end{cor}
\begin{proof}[\textbf{Proof of \eqref{eq:monotonicityforaveragecorrelation1} in Theorem  \ref{theo:monotonicity}}]
The proof is by contradiction.
Thus, suppose that there exists an odd integer $n \in (0, L/2)$
such that $G^{\boldsymbol{e}_i}_L(y + n \boldsymbol{e}_i) > G^{\boldsymbol{e}_i}_L(y + (n-2) \boldsymbol{e}_i)$. 
From, this assumption and from an iterative application of Proposition \ref{prop:superharmonicity} with $q=2$ we deduce that,
\begin{equation*}
\forall m \in (n,  L) \cap (2\mathbb{Z}+1)  \quad 
G^{\boldsymbol{e}_i}_L(y + (m+2) \boldsymbol{e}_i)-G^{\boldsymbol{e}_i}_L(y + m \boldsymbol{e}_i) \geq G^{\boldsymbol{e}_i}_L(y + n \boldsymbol{e}_i) -G^{\boldsymbol{e}_i}_L(y + (n-2) \boldsymbol{e}_i)>0,
\end{equation*}
which in particular implies that
$$
\forall m \in [n-2,  L) \cap (2\mathbb{Z}+1) \quad \quad  G^{\boldsymbol{e}_i}_L \big (y + (m+2) \boldsymbol{e}_i)>G^{\boldsymbol{e}_i}_L(y + m \boldsymbol{e}_i \big ),
$$
i.e, the function $G^{\boldsymbol{e}_i}_L(y + m \boldsymbol{e}_i \big )$ is strictly increasing  with respect to the odd integers in  $[n-2, L)$.
From this we deduce that,
\begin{equation*}\label{eq:tocontradict}
 G^{\boldsymbol{e}_i}_L\big (y -  n \boldsymbol{e}_i \big ) =  G^{\boldsymbol{e}_i}_L \big (y  + n \boldsymbol{e}_i +  (L - 2 n) \boldsymbol{e}_i   \big ) 
  >
  G^{\boldsymbol{e}_i}_L\big (y + n \boldsymbol{e}_i \big ).
\end{equation*}
However, the previous relation cannot hold true  by torus symmetry, thus we found the desired contradiction and concluded the proof.
\end{proof}

\subsection{Monotonicity for spins using reflection through sites and proof of Theorem \ref{theo:monotonicityspinO(N)}}
\label{subsect:sitesreflection}
In this section we define reflections in planes of sites. It is a classical fact that the Gibbs measure associated to the spin O(N) model is positive under reflections through sites.
We use such a notion of reflection positivity to 
obtain inequalities which are analogous to those which were proved in the previous section, but which hold at `even' points of the torus.

\paragraph{Reflections through sites.}
Consider a plane, $R$, which is orthogonal to one of the cartesian vectors $\boldsymbol{e_i}$,
$i \in \{1, \ldots, d\}$, and intersects 
$L^{d-1}$ sites of the graph $(\T_L,\Ecal_L)$,  i.e.
$R = \{z \in \mathbb{R}^d \, \, : \, \, z \cdot \boldsymbol{e_i} = m      \}$, for some  $m \in \mathbb{Z} \cap [0,L)$
and $i \in \{1, \ldots, d\}$.
See Figure \ref{Fig:reflectionsthroughsites} for an example. 
Given such 
a plane $R$, we denote by 
$\Theta : \T_L \rightarrow \T_L$  the reflection operator which reflects the vertices of $\T_L$ with respect to $R$, i.e.
for any $x = (x_1, x_2, \ldots, x_d) \in \T_L$, 
\begin{equation}
\Theta(x)_k : = 
\begin{cases}
x_k & \mbox{ if } k \neq i, \\ 
2m - x_k   \mod  L & \mbox{ if } k = i.
\end{cases}
\end{equation}
Let $\T_L^+, \T_L^- \subset \T_L$ be the corresponding decomposition of the torus into two (overlapping) halves ($\T_L^+\cup\T_L^-=\T_L$)
such that $\Theta(\T_L^\pm) = \T_L^{\mp}$.
We define
$\T_L^R := \T_L^+\cap \T_L^-$, which has cardinality $2 \, L^{d-1}$. We further define $\Ecal^{+}_L, \Ecal^{-}_L \subset \Ecal_L$ to be the set of edges $\{x,y\}$ with both $x$ and $y$ in $\T_L^+$ respectively $\T_L^-$.
Contrary to reflections through edges, we have 
$\Ecal^{+}_L \cap \Ecal^{-}_L = \emptyset$.
A reflection $\Theta$ in a plane $R$ through sites acts on functions $f:(\mathbb{S}^{N-1})^{\T_L}\to\R$ as $\Theta f(\varphi)=f(\Theta(\varphi))$ where $\Theta(\varphi)_x=\varphi_{\Theta(x)}$.
Let $\Bcal^{\pm}$ be the set of bounded measurable functions $f:(\mathbb{S}^{N-1})^{\T_L}\to\R$ depending only on spins in $\T_L^{\pm}$. More precisely, $f\in\Bcal^{\pm}$ if for any $\varphi,\varphi^{\prime}\in(\mathbb{S}^{N-1})^{\T_L}$ such that $\varphi_x=\varphi^{\prime}_x$ for all $x\in\T_L^\pm$ we have $f(\varphi)=f(\varphi^{\prime})$.

The next proposition is classical. For a proof, see for example \cite[Chapter 10]{FriedliVelenik}.
\begin{prop}\label{prop:RPsites}
Consider the torus $(\T_{L},\Ecal_{L})$ for $L\in2\N$. Let $R$ be a reflection plane bisecting vertices and  let $\Theta$ be the corresponding reflection operator. Let $N \in \mathbb{N}_{>0}$, $\beta \geq 0$, and let $\langle \cdot\rangle_{L,N,\beta}$ be the expectation operator associated to the spin $O(N)$ (recall \eqref{eq:deffunctional}).
For any pair of functions
$f, g \in \mathcal{B}^+$, we have that,
\begin{enumerate}
\item[(1)] $\langle \,  f \, \Theta g \,  \rangle_{L, N, \beta} = \langle \,  g\,  \Theta f \, \rangle_{L, N, \beta}$
\item[(2)] $ \langle \, f \,  \Theta f\,  \rangle_{L, N, \beta} \geq 0$.
\end{enumerate}
From this we obtain that,
\begin{equation}\label{eq:RPCSsites}
\langle f \, \Theta g \rangle_{L, N, \beta}
\leq 
\langle f \, \Theta f \rangle_{L, N, \beta}^{\frac{1}{2}}
\, \, 
\langle g \, \Theta g \rangle_{L, N, \beta}^{\frac{1}{2}}.
\end{equation}
\end{prop}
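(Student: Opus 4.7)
The plan is to establish the three parts of the proposition in the standard order: deduce the Cauchy--Schwarz-type bound \eqref{eq:RPCSsites} from (1) and (2) by observing that the bilinear form $(f,g)\mapsto\langle f\,\Theta g\rangle_{L,N,\beta}$ on $\Bcal^+$ is symmetric by (1) and positive semi-definite by (2), so optimising $\lambda\in\R$ in $\langle(f-\lambda g)\,\Theta(f-\lambda g)\rangle_{L,N,\beta}\geq 0$ yields the inequality; then verify (1) by a change of variables; and finally prove (2) by a factorisation argument across the reflection plane.

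Claim (1) should follow from the invariance of the Gibbs measure under $\Theta$. The Hamiltonian \eqref{eq:hamiltonian} is a sum of pair couplings $\varphi_x\cdot\varphi_y$ over nearest-neighbour edges, and $\Theta$ is an involutive graph automorphism of $(\T_L,\Ecal_L)$, hence $H_{\Gcal,N}(\Theta\varphi)=H_{\Gcal,N}(\varphi)$; the product measure $\prod_{x}d\varphi_x$ is also $\Theta$-invariant. Substituting $\varphi\mapsto\Theta\varphi$ in the integral defining $\langle f\,\Theta g\rangle_{L,N,\beta}$ and using $\Theta^2=\mathrm{Id}$ produces $\langle g\,\Theta f\rangle_{L,N,\beta}$.

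The substance is in (2). The key geometric input, specific to nearest-neighbour interactions and reflections \emph{through sites}, is that $\T_L^R$ separates $\T_L^+\setminus\T_L^R$ from $\T_L^-\setminus\T_L^R$ in the graph: no edge has one endpoint in each of these two disjoint pieces. This lets me decompose
\[
H_{\Gcal,N}\;=\;H^++H^-+H^R,
\]
where $H^+\in\Bcal^+$ collects the couplings on edges with at least one endpoint in $\T_L^+\setminus\T_L^R$, $H^-\in\Bcal^-$ is the analogue on the opposite side, and $H^R$ collects the couplings on edges internal to $\T_L^R$ and hence depends only on $\varphi_R:=(\varphi_x)_{x\in\T_L^R}$. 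By reflection symmetry $\Theta H^+=H^-$ and $H^R$ is $\Theta$-invariant. Setting $F:=f\,e^{-\beta H^+}\in\Bcal^+$, we have $\Theta F=(\Theta f)\,e^{-\beta H^-}$, so conditioning on $\varphi_R$ factorises the integral:
\[
Z^{spin}_{\Gcal,N,\beta}\,\langle f\,\Theta f\rangle_{L,N,\beta}=\int d\varphi_R\,e^{-\beta H^R(\varphi_R)}\Big(\int F\,d\varphi_{\T_L^+\setminus\T_L^R}\Big)\Big(\int \Theta F\,d\varphi_{\T_L^-\setminus\T_L^R}\Big).
\]
A change of variables $\varphi\mapsto\Theta\varphi$ in the second inner integral, using the $\Theta$-invariance of the uniform measure on $\mathbb{S}^{N-1}$, shows the two inner integrals are equal functions of $\varphi_R$, call it $\Phi(\varphi_R)\in\R$. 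The integrand then reduces to $e^{-\beta H^R(\varphi_R)}\,\Phi(\varphi_R)^2\geq 0$, and (2) follows by integration.

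The main obstacle I expect is the bookkeeping for edges internal to $\T_L^R$: on the torus, $\T_L^R$ is a two-layer set of cardinality $2L^{d-1}$ (the plane $R$ together with its antipodal translate), and the edges with both endpoints there must be assigned exclusively to $H^R$ rather than double-counted between $H^+$ and $H^-$, so that the inclusions $H^\pm\in\Bcal^\pm$ and $F\in\Bcal^+$ hold cleanly. This combinatorial step is needed to justify the crucial fact that once $\varphi_R$ is fixed, $F$ and $\Theta F$ depend on disjoint collections of spin variables; once it is in place, the factorisation above is essentially a pointwise square argument and the rest is automatic.
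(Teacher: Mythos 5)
Your proof is correct and is exactly the classical argument that the paper does not reproduce but instead cites (``The next proposition is classical. For a proof, see for example \cite[Chapter 10]{FriedliVelenik}''): symmetry and positivity give the Cauchy--Schwarz bound, (1) follows from $\Theta$-invariance of the Hamiltonian and the a priori measure, and (2) follows from the factorisation of the Gibbs weight across the fixed site-layer $\T_L^R$, which turns $\langle f\,\Theta f\rangle$ into an integral of $e^{-\beta H^R}\Phi(\varphi_R)^2\geq 0$. Your attention to assigning the edges internal to $\T_L^R$ exclusively to $H^R$ is the right bookkeeping, and the separation property you invoke (no nearest-neighbour edge joins $\T_L^+\setminus\T_L^R$ to $\T_L^-\setminus\T_L^R$) is precisely what makes the reflection-through-sites factorisation work.
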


Recall from Proposition \ref{prop:equivalence} that when $U$ is chosen according to \eqref{eq:weightSpin} we have for $x\neq y$
$$
\langle\varphi_x^1\varphi_y^1\rangle_{L,N,\beta}=G_{L,N,\beta,U}(x,y).
$$
Our current aim is to prove complementary results to Propositions 
\ref{prop:general sufficient} and
\ref{prop:superharmonicity} in the context of the $O(N)$ spin model with reflections through sites. To begin we prove a complementary result to Theorem \ref{thm:RPaverage}. Recall that for $A\subset \T_L$ and a reflection $\Theta$ we define $A^\pm=(A\cap\T_L^\pm)\cup(\Theta(A\cap\T_L^\pm))$ (we use this same definition for reflections through sites or edges).

\begin{prop}\label{prop:RPaveragespins}
Under the same assumptions of Proposition \ref{prop:RPsites} we have
\begin{equation}
\sum_{\substack{x,y\in A\\ x\neq y}}\langle \varphi^1_x \varphi^1_y\rangle_{L,N,\beta}\leq \frac{1}{2}\left( \sum_{\substack{x,y\in A^+\\ x\neq y}}\langle \varphi^1_x \varphi^1_y\rangle_{L,N,\beta}+\sum_{\substack{x,y\in A^-\\ x\neq y}}\langle \varphi^1_x \varphi^1_y\rangle_{L,N,\beta}\right).
\end{equation}
\end{prop}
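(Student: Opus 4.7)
The plan is to reduce the claim to a squared-magnetization inequality of the form
$$\langle S_A^2\rangle_{L,N,\beta} \leq \tfrac12\bigl(\langle S_{A^+}^2\rangle_{L,N,\beta} + \langle S_{A^-}^2\rangle_{L,N,\beta}\bigr),\qquad S_B:=\sum_{x\in B}\varphi_x^1,$$
and then extract this from the Cauchy--Schwarz reflection-positivity inequality \eqref{eq:RPCSsites}. Two preliminary observations justify the reduction. First, by torus-translation invariance of $\langle\cdot\rangle_{L,N,\beta}$, the diagonal $\sum_{x\in B}\langle (\varphi_x^1)^2\rangle_{L,N,\beta}$ equals $|B|\,\langle(\varphi_o^1)^2\rangle_{L,N,\beta}$ for every $B\subset\T_L$. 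Second, splitting $A$ into $A_0:=A\cap\T_L^R$, $A_+:=(A\cap\T_L^+)\setminus\T_L^R$, $A_-:=(A\cap\T_L^-)\setminus\T_L^R$ and observing that $\T_L^R$ is precisely the fixed-point set of $\Theta$, one checks $|A^+|=2|A_+|+|A_0|$ and $|A^-|=2|A_-|+|A_0|$, hence $\tfrac12(|A^+|+|A^-|)=|A|$. Consequently the diagonal contributions on the two sides of the desired inequality for the sum over ordered pairs $x\neq y$ cancel, and it suffices to compare $\langle S_A^2\rangle_{L,N,\beta}$ with $\tfrac12(\langle S_{A^+}^2\rangle_{L,N,\beta}+\langle S_{A^-}^2\rangle_{L,N,\beta})$.

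Next, I would introduce the decomposition $a:=S_{A_0}$, $b:=S_{A_+}$, $c:=S_{\Theta(A_-)}$, so that $b,c\in\Bcal^+$, $a\in\Bcal^+\cap\Bcal^-$ with $\Theta a=a$, and $S_{A_-}=\Theta c$. Then
$$S_A = a+b+\Theta c,\qquad S_{A^+}=a+b+\Theta b,\qquad S_{A^-}=a+c+\Theta c.$$
Expanding the squares and using the $\Theta$-invariance of the Gibbs measure (so that $\langle(\Theta X)^2\rangle_{L,N,\beta}=\langle X^2\rangle_{L,N,\beta}$ and $\langle a\,\Theta X\rangle_{L,N,\beta}=\langle a X\rangle_{L,N,\beta}$ whenever $\Theta a=a$), the identities
\begin{align*}
\langle S_A^2\rangle_{L,N,\beta}&=\langle a^2\rangle+\langle b^2\rangle+\langle c^2\rangle+2\langle ab\rangle+2\langle ac\rangle+2\langle b\,\Theta c\rangle,\\
\tfrac12\bigl(\langle S_{A^+}^2\rangle_{L,N,\beta}+\langle S_{A^-}^2\rangle_{L,N,\beta}\bigr)&=\langle a^2\rangle+\langle b^2\rangle+\langle c^2\rangle+2\langle ab\rangle+2\langle ac\rangle+\langle b\,\Theta b\rangle+\langle c\,\Theta c\rangle,
\end{align*}
follow after a short cancellation. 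Thus the proposition is reduced to the single inequality
$$2\langle b\,\Theta c\rangle_{L,N,\beta} \leq \langle b\,\Theta b\rangle_{L,N,\beta}+\langle c\,\Theta c\rangle_{L,N,\beta}.$$

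Finally, since $b,c\in\Bcal^+$, Proposition \ref{prop:RPsites} applies: \eqref{eq:RPCSsites} yields $\langle b\,\Theta c\rangle\leq\sqrt{\langle b\,\Theta b\rangle\langle c\,\Theta c\rangle}$, where both factors on the right are non-negative by part~(2) of that proposition, and one AM--GM step delivers the required bound. The only subtle point in the argument is bookkeeping on the reflection plane: one must treat the overlap $A_0\subset\T_L^R$ correctly both in the cardinality count and in the decomposition of $S_A$, so that $a$ appears exactly once (not twice) and its symmetry $\Theta a=a$ is used to turn the cross terms $\langle a\,\Theta b\rangle$ and $\langle a\,\Theta c\rangle$ into the matching $\langle ab\rangle$ and $\langle ac\rangle$ that cancel across the two sides of the inequality.
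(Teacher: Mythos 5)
Your proof is correct, but it takes a genuinely different route from the paper's. The paper mimics the proof of Theorem \ref{thm:RPaverage}: it applies \eqref{eq:RPCSsites} to the \emph{multiplicative} observables $f=\prod_{x\in A\cap(\T_L^+\setminus\T_L^R)}(1+\eta\varphi^1_x)\prod_{x\in A\cap\T_L^R}(1+\eta\varphi^1_x)^{1/2}$ and the analogous $g$, so that $f\,\Theta g$, $f\,\Theta f$, $g\,\Theta g$ reproduce the products over $A$, $A^+$, $A^-$ respectively --- the square root on $A\cap\T_L^R$ being exactly the device that makes the overlap count once --- and then Taylor-expands in $\eta$, using $\langle\varphi^1_z\rangle_{L,N,\beta}=0$ to kill the linear term and comparing the coefficients of $\eta^2$ as $\eta\to0$. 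You instead work with the \emph{additive} observables $S_B$ and obtain an exact algebraic identity with no expansion, reducing everything to $2\langle b\,\Theta c\rangle\le\langle b\,\Theta b\rangle+\langle c\,\Theta c\rangle$ for $b,c\in\Bcal^+$, which is immediate from reflection positivity (either via \eqref{eq:RPCSsites} plus AM--GM as you do, or in one stroke from $\langle(b-c)\,\Theta(b-c)\rangle\ge0$ combined with part (1) of Proposition \ref{prop:RPsites}). Your bookkeeping of the overlap is sound: $a=S_{A_0}$ appears once, $\Theta a=a$ because $\Theta$ fixes $\T_L^R$ pointwise, and $\tfrac12(|A^+|+|A^-|)=|A|$ so the diagonal contributions cancel by translation invariance. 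What your route buys is the absence of any small-parameter limit or error terms (and it does not even need the vanishing of the one-point function); what the paper's route buys is uniformity of method with the random path model, Theorem \ref{thm:RPaverage}, where the natural generating function $Z^{\text{field}}$ is multiplicative over sites and no additive analogue of $S_B$ is available.
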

\begin{proof}
First we apply Proposition \ref{prop:RPsites} for a plane $R$ through sites with associated reflection operator $\Theta$. For $\eta>0$ we take 
\begin{align}
f(\varphi)&=\prod_{x\in A\cap (\T_L^+\setminus \T_L^R)}\big(1+\eta\varphi^1_x\big)\prod_{x\in A\cap\T_L^R}\big(1+\eta\varphi^1_x\big)^{\tfrac12},
\\
g(\varphi)&=\prod_{x\in A\cap(\T_L^-\setminus\T_L^R)}\big(1+\eta\varphi^1_{\Theta x}\big)\prod_{x\in A\cap\T_L^R}\big(1+\eta\varphi^1_{\Theta x}\big)^{\tfrac12}.
\end{align}
If $\eta\leq1$ then these functions are non-negative and there is no issue with taking the square root of $1+\eta\varphi^1_x$.
Note that $f,g\in\Bcal^+$, hence we may use Proposition \ref{prop:RPsites}.
We have $\langle f \, \Theta g \rangle_{L, N, \beta}=\langle \prod_{x\in A}\big(1+\eta\varphi^1_x\big)\rangle_{L, N, \beta}$, $\langle f \, \Theta f \rangle_{L, N, \beta}=\langle\prod_{x\in A^+}\big(1+\eta\varphi^1_x\big)\rangle_{L, N, \beta}$ and $\langle g \, \Theta g \rangle_{L, N, \beta}=\langle\prod_{x\in A^-}\big(1+\eta\varphi^1_x\big)\rangle_{L, N, \beta}$ (here we used that $A^-$ is symmetric with respect to $R$, hence we may replace $\varphi^1_{\Theta x}$ with $\varphi^1_x$ in $\langle g \, \Theta g \rangle_{L, N, \beta}$). From this we obtain 
$$
\langle f \, \Theta g \rangle_{L, N, \beta}  =1+\eta\sum_{x\in A}\langle \varphi^1_x\rangle_{L, N, \beta}+\eta^2\sum_{\substack{x,y\in A\\ x\neq y}}\langle \varphi^1_x\varphi^1_y \rangle_{L, N, \beta}+O(\eta^3)=1+\eta^2\sum_{\substack{x,y\in A\\ x\neq y}}\langle \varphi^1_x\varphi^1_y \rangle_{L, N, \beta}+O(\eta^3),
$$
where we used that $\langle \varphi^1_z\rangle_{L, N, \beta}=0$ for every $z\in\T_L$. We also have the corresponding equalities for $\langle f \, \Theta f \rangle_{L, N, \beta}$ and $\langle g \, \Theta g \rangle_{L, N, \beta}$.
Now we use Proposition \ref{prop:RPsites} and the expansion $(1+x)^{\tfrac12}=1+\tfrac{x}{2}+O(x^2)$ in the same way as in the proof of Theorem \ref{thm:RPaverage} to obtain that
\begin{equation}
\begin{aligned}
\langle f \, \Theta g \rangle_{L, N, \beta}&=1+\eta^2\sum_{\substack{x,y\in A\\ x\neq y}}\langle \varphi^1_x\varphi^1_y \rangle_{L, N, \beta}+O(\eta^3)
\\
&\leq\bigg(\Big(1+\eta^2\sum_{\substack{x,y\in A^+\\ x\neq y}}\langle \varphi^1_x\varphi^1_y \rangle_{L, N, \beta}+O(\eta^3)\Big)\Big(1+\eta^2\sum_{\substack{x,y\in A^-\\ x\neq y}}\langle \varphi^1_x\varphi^1_y \rangle_{L, N, \beta}+O(\eta^3)\Big)\bigg)^{\tfrac12}
\\
&=\bigg(1+\eta^2\Big(\sum_{\substack{x,y\in A^+\\ x\neq y}}\langle \varphi^1_x\varphi^1_y \rangle_{L, N, \beta}+\sum_{\substack{x,y\in A^-\\ x\neq y}}\langle \varphi^1_x\varphi^1_y \rangle_{L, N, \beta}\Big)
+O(\eta^3)\bigg)^{\tfrac12}
\\
&=1+\frac{\eta^2}{2}\Big(\sum_{\substack{x,y\in A^+\\ x\neq y}}\langle \varphi^1_x\varphi^1_y \rangle_{L, N, \beta}+\sum_{\substack{x,y\in A^-\\ x\neq y}}\langle \varphi^1_x\varphi^1_y \rangle_{L, N, \beta}\Big)+O(\eta^4).
\end{aligned}
\end{equation}
Now by inspecting the $\eta^2$ term we see, by taking $\eta$ sufficiently small, that the result follows.
\end{proof}
It is worth noting that in the case when $A=\{x,y\}$ and the reflection plane $R$ is such that $x\in\T_L^+$ and $y\in\T_L^-$ Proposition \ref{prop:RPaveragespins} becomes
\begin{equation}\label{eq:averagespin2sites}
\langle\varphi_x^1\varphi_y^1\rangle_{L,N,\beta}\leq \frac{\langle\varphi_x^1\varphi_{\Theta x}^1\rangle_{L,N,\beta}+\langle\varphi_{\Theta y}^1\varphi_y^1\rangle_{L,N,\beta}}{2},
\end{equation}
and is analogous to (\ref{eq:stat1corrineq}).

Now we present our complementary results to Propositions 
\ref{prop:general sufficient} and
\ref{prop:superharmonicity}.
 The only difference in the statements is that `odd' is replaced by `even'
and that the proposition holds only for the weight function $U$ as in Proposition \ref{prop:equivalence}. Under this choice, the random path model is a representation of the spin O(N) model.
\begin{figure}
\includegraphics[scale=1.2]{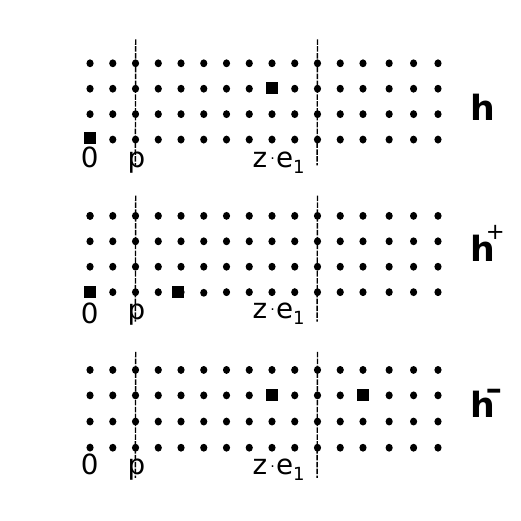}
\includegraphics[scale=1.2]{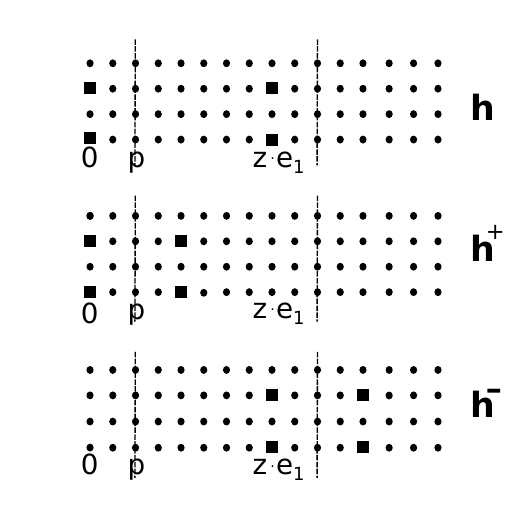}
\centering
\caption{A representation of three slices of a torus of side length $L=16$, the dashed line pass through the sites of $\T_L^R$. The vector fields equal 1 on the square vertices and 0 on the round vertices.
}
\label{Fig:reflectionsthroughsites}
\end{figure}
\begin{prop}\label{prop:general sufficientsites}
Let $L \in 2 \mathbb{N}$,  let $z \in \T_L$ be an arbitrary point such that $z \neq o$, let $\boldsymbol{e}_i$ be a cartesian vector,
let $\beta \geq 0$, $N \in \mathbb{N}_{>0}$, and
let $U$ be given by \eqref{eq:weightSpin}. 
The following inequalities holds for any integer $q \in \mathbb{N}$ such that  $z\cdot\be_i+q$ is even and such that $z\cdot\be_i-q, z\cdot\be_i+q \in (0, L)$ 
\begin{align}
\label{eq:super-harmonicitytwosites}
& G_{L,N,\beta,U}( z)  \leq \frac{1}{2} G_{L,N,\beta,U}\big( (z\cdot\be_i-q) \, \boldsymbol{e_i}  \big )   + \frac{1}{2} G_{L,N,\beta,U}\big (  (z\cdot\be_i+q ) \, \boldsymbol{e_i}  \big ), \\
\label{eq:super-harmonicitysites}
& 
G_{L,N,\beta,U}^{\boldsymbol{e}_i} \big ( z  \big )  - 
G_{L,N,\beta,U}^{\boldsymbol{e}_i}\big (  z - q \, \boldsymbol{e}_i \big ) \leq 
G_{L,N,\beta,U}^{\boldsymbol{e}_i} \big (  z + q \,  \boldsymbol{e_i} \big )  - 
G_{L,N,\beta,U}^{\boldsymbol{e}_i} \big (  z \big ).
\end{align}
\end{prop}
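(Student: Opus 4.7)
My plan is to mirror the proofs of Proposition~\ref{prop:general sufficient} and Proposition~\ref{prop:superharmonicity}, with the reflection plane through edges replaced by a reflection plane through sites, and Theorem~\ref{thm:RPaverage} replaced by Proposition~\ref{prop:RPaveragespins} (together with the correspondence from Proposition~\ref{prop:equivalence}, which lets me write $\langle\varphi_x^1\varphi_y^1\rangle_{L,N,\beta}=G_{L,N,\beta,U}(x,y)$). Write $s=z\cdot\be_i$ and set
\[
p=\frac{s+q}{2},
\]
which is an integer since $s+q$ is even. The assumption $s-q,s+q\in(0,L)$ yields $0<p<L/2$ and $q<p<s$, so the reflection plane $R=\{x\,:\,x\cdot\be_i=p\}$ bisects vertices; fix the corresponding partition so that $o\in\T_L^+\setminus\T_L^R$ and $z\in\T_L^-\setminus\T_L^R$. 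A direct computation gives $\Theta o=(s+q)\be_i$, $\Theta(s\be_i)=q\be_i$, $\Theta(z-s\be_i)=z+q\be_i$, and $\Theta z=z+(q-s)\be_i$.

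For \eqref{eq:super-harmonicitytwosites} the plan is to apply Proposition~\ref{prop:RPaveragespins} with $A=\{o,z\}$ (equivalently \eqref{eq:averagespin2sites}): using translation invariance and $G(w)=G(-w)$, the right-hand side becomes $\tfrac12\bigl(G_{L,N,\beta,U}((s+q)\be_i)+G_{L,N,\beta,U}((s-q)\be_i)\bigr)$, which is exactly the claimed bound.

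For \eqref{eq:super-harmonicitysites} the plan is to apply Proposition~\ref{prop:RPaveragespins} to the four-point set
\[
A=\{o,\,z,\,s\be_i,\,z-s\be_i\},
\]
which (as one checks from the computation above) reflects to
\[
A^+=\{o,\,z-s\be_i,\,(s+q)\be_i,\,z+q\be_i\},\qquad A^-=\{z,\,s\be_i,\,z+(q-s)\be_i,\,q\be_i\}.
\]
Enumerating the six unordered pairs in each of $A$, $A^+$, $A^-$ and using translation invariance, $G(w)=G(-w)$, and the reflection symmetry $G(v\be_i+w)=G(v\be_i-w)$ for $w\perp\be_i$ (which identifies, e.g., $G(z-2s\be_i)$ with $G(z)$ and $G((s+q)\be_i-z^\perp)$ with $G(z+q\be_i)$), the inequality of Proposition~\ref{prop:RPaveragespins} becomes, after cancelling the common $G_{L,N,\beta,U}(z-s\be_i)$ contributions,
\[
2G_{L,N,\beta,U}(z)+2G_{L,N,\beta,U}(s\be_i)\le G_{L,N,\beta,U}((s+q)\be_i)+G_{L,N,\beta,U}(z+q\be_i)+G_{L,N,\beta,U}((s-q)\be_i)+G_{L,N,\beta,U}(z-q\be_i),
\]
which is exactly \eqref{eq:super-harmonicitysites} after dividing by $2Z_L(\emptyset)$ and recognising the averaged two-point functions on each side.

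The routine part is checking positions, reflections, and the symmetry simplifications; the only step that requires care is the same one as in Proposition~\ref{prop:superharmonicity}, namely verifying that the condition $z\cdot\be_i-q,z\cdot\be_i+q\in(0,L)$ places all of $A$ strictly on one side or the other of $R$ (so that no element of $A$ lies in $\T_L^R$), and that the pairwise differences in $A^\pm$ collapse, via torus symmetries, onto exactly the four translates $z\pm q\be_i,(s\pm q)\be_i$ appearing in the target inequality.
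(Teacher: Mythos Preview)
Your proof is correct and follows essentially the same route as the paper: the same reflection plane $R=\{x:x\cdot\be_i=\tfrac12(s+q)\}$ through sites, the same application of Proposition~\ref{prop:RPaveragespins} to $A=\{o,z\}$ for \eqref{eq:super-harmonicitytwosites} and to the four-point set $A=\{o,z,s\be_i,z-s\be_i\}$ for \eqref{eq:super-harmonicitysites}, with the same reflected sets $A^\pm$. Two cosmetic remarks: the phrase ``after dividing by $2Z_L(\emptyset)$'' is superfluous since your displayed inequality is already written in terms of $G$; and you should note (as the paper does in the analogous edge case) that when $z$ lies on the $\be_i$-axis the four-point set degenerates to two points, whereupon \eqref{eq:super-harmonicitysites} reduces to \eqref{eq:super-harmonicitytwosites}.
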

\begin{proof}
The inequality \eqref{eq:super-harmonicitytwosites} follows from Proposition \ref{prop:RPaveragespins} applied with $x=o$, $y=z$ and taking the reflection in the plane $R=\{x\in \R\, : \, x\cdot\be_i=\tfrac12(z\cdot\be_i+q)\}$ which requires that $z\cdot\be_i+q$ is even. We have $\langle\varphi_o^1\varphi_{\Theta o}^1\rangle_{L,N,\beta}=\langle\varphi_o^1\varphi_{(z\cdot\be_i+q)\be_i}^1\rangle_{L,N,\beta}$ and $\langle\varphi_z^1\varphi_{\Theta z}^1\rangle_{L,N,\beta}=\langle\varphi_z^1\varphi_{z+(z\cdot\be_i+q-2z\cdot\be_i)\be_i}^1\rangle_{L,N,\beta}=\langle\varphi_o^1\varphi_{(q-z\cdot\be_i)\be_i}^1\rangle_{L,N,\beta}=\langle\varphi_o^1\varphi_{(z\cdot\be_i-q)\be_i}^1\rangle_{L,N,\beta}$ where we used symmetries of the torus. After applying Proposition \ref{prop:RPaveragespins} in the form given by \eqref{eq:averagespin2sites} (which requires that $z\cdot\be_i-q>0$ so that $o$ and $z$ are in different halves of the torus) and then using Proposition \ref{prop:equivalence} we obtain the result.
For \eqref{eq:super-harmonicitysites} the result will follow from the inequality,
\begin{equation}
\begin{aligned}
\label{eq:firstrelationsites}
& \langle \varphi_o \cdot \varphi_{(z\cdot\be_i)\be_i} \rangle_{L, N, \beta}+\langle \varphi_o \cdot \varphi_ z \rangle_{L, N, \beta}
\\
 &\leq
\frac{1}{2}\bigg(\langle \varphi_o \cdot \varphi_ { (z\cdot\be_i+q ) \, \boldsymbol{e_i} } \rangle_{L, N, \beta}+\langle \varphi_o \cdot \varphi_ { (z\cdot\be_i-q) \boldsymbol{e_i}} \rangle_{L, N, \beta}+
\langle \varphi_o \cdot \varphi_ {z+q\be_i} \rangle_{L, N, \beta}+ 
\langle \varphi_o \cdot \varphi_ {z-q\be_i} \rangle_{L, N, \beta}
\bigg),
\end{aligned}
\end{equation}
after rearranging (just as in the proof of Proposition \ref{prop:superharmonicity}) and then using Proposition \ref{prop:equivalence} to move to the path model. We take the same plane $R=\{x\in \R\, : \, x\cdot\be_i=\tfrac12(z\cdot\be_i+q)\}$ with its associated reflection operator $\Theta$ as previously. Consider the set $$
A=\{o,z,(z\cdot\be_i)\be_i,z-(z\cdot\be_i)\be_i\}.
$$
 If we define $A^\pm=(A\cap\T_L^\pm)\cup(\Theta(A\cap\T_L^\pm))$ we have 
 $$
 A^+=\{o,(z\cdot\be_i+q)\be_i,z-(z\cdot\be_i)\be_i,z+q\be_i\},
 $$
 $$
 A^-=\{z,(z\cdot\be_i)\be_i,q\be_i,z+(q-z\cdot\be_i)\be_i\}.
 $$
Now recall from Proposition \ref{prop:equivalence} that $\langle\varphi_x^1\varphi_y^1\rangle_{L,N,\beta}=G_{L,N,\beta,U}(x,y)$ for an appropriate choice of $U$.  Applying Proposition \ref{prop:RPaveragespins} and using translation invariance we obtain \eqref{eq:firstrelationsites}. Now we use Proposition \ref{prop:equivalence} to move back to the random path model, giving the result.
\end{proof}

We now present the proof of Theorem \ref{theo:monotonicityspinO(N)}. 
Contrary to Theorem \ref{theo:monotonicity}, the statement is proved only for the spin $O(N)$ model. The reflection positivity of the model for reflections through sites allows the derivation of a full monotonicity property, which is not just limited to odd sites.

\begin{proof}[\textbf{Proof of Theorem \ref{theo:monotonicityspinO(N)}}]
The first inequality follows from Theorem \ref{theo:monotonicity} at odd sites and from a direct application of
(\ref{eq:averagespin2sites}) at even sites.
The proof of the second inequality is analogous to the proof of the second inequality in Theorem \ref{theo:monotonicity}.
To begin,  write $G^{\be_i}_L=G^{\be_i}_{L,N,\beta,U}$ where $U$ is given by \eqref{eq:weightSpin}. From Propositions \ref{prop:superharmonicity} and \ref{prop:general sufficientsites} with $q=1$ we have that, for any $y \in \T_L$
such that $y\cdot\be_i\pm 1\in(0,L)$
\begin{equation}\label{eq:deduction}
G^{\be_i}_L(y+ \be_i)-G^{\be_i}_L(y)\geq G^{\be_i}_L(y)-G^{\be_i}_L(y- \be_i),
\end{equation}
Note that here we have no restriction on the parity of $y\cdot\be_i$.
Suppose that for some $z$ such that $z\cdot\be_i=0$ and some $n\in(0,L/2],$
$$
G^{\be_i}_L(z+(n+1)\be_i)>G^{\be_i}_L(z+n\be_i).
$$
This will lead us to a contradiction. Indeed, 
from (\ref{eq:deduction}) we deduce that, $G^{\be_i}_L(z+(n+1)\be_i)<G^{\be_i}_L(z+(n+2)\be_i)<G^{\be_i}_L(z+(n+3)\be_i)<\dots < G^{\be_i}_L(z-(n+1)\be_i)=G^{\be_i}_L(z+(n+1)\be_i)$, where in the last steps we used the symmetry of the torus.
This contradiction completes the proof of the second inequality.
 \end{proof}
 
 \begin{rem}
 For the proof of the previous theorem we used the inequalities derived from Proposition \ref{prop:reflection positivity 2}, which used reflection through edges in the context of interacting paths,
 and those derived from Proposition \ref{prop:RPaveragespins}, which used reflection through sites in the context of spins.
 For the spin O(N) model it would be  possible to derive Proposition  \ref{prop:reflection positivity 2} without using its representation as a system of interacting paths, using reflection through edges in a classical way (see for example \cite[Chapter 10]{FriedliVelenik}). Thus, representing the spin O(N) model as a system of interacting paths is not really necessary for the derivation of Theorem
 \ref{theo:monotonicityspinO(N)}.
 \end{rem}

\subsection{Proof of Theorem \ref{theo:pointwise}}
The main goal of this section is to present the proof of Theorem \ref{theo:pointwise}, which is presented at the end of the section.
For any $z \in \mathbb{Z}^d$, we define the box with $z$ as corner, 
$ 
\mathbb{Q}_z := \big \{ (x_1, \ldots,  x_d) \, \, \in \mathbb{Z}^d : \, \,  \forall i \in \{1, \ldots, d\}, \, \, 
x_i \leq |z_i| $ or $x_i > L - |z_i|   \big  \}.
$ 
It will be necessary to consider vertices that lie on certain $d-1$ dimensional hyperplanes of $\T_L$, but not on a cartesian axis, separately from other vertices. Due to this necessity we define
\begin{equation}
\Hcal_L:=\{x=(x_1,\dots,x_d)\in \T_L\, \, : \, \, x_1,x_2,\dots,x_d\neq 0\text{ or } \exists i\text{ s.t. }x_i\neq 0,\, x_j=0\, \forall j\neq i\},
\end{equation}
to be the set of vertices with all non-zero coordinates together with the cartesian axes.
For notational reasons, in the sequel we will omit the sub-script from $\langle \varphi^1_o \varphi^1_z \rangle_{L, N, \beta}$ when appropriate. The next proposition is a consequence of Theorem \ref{theo:monotonicityspinO(N)} and applies to vertices in $\Hcal_L$.
\begin{prop}\label{prop:goinginside}
Under the same assumptions of Proposition \ref{prop:RPsites}, we have that,
for any $z \in \T_L\cap \Hcal_L$ such that $\|   z  \|_{ \infty} \leq \frac{L}{2}$,
\begin{equation}\label{eq:conclusioninside}
\langle \varphi^1_o \varphi^1_z \rangle_{L, N, \beta}  \geq 1/N - \delta \implies  \langle \varphi^1_o \varphi^1_y  \rangle_{L, N, \beta} \geq 1/N - 2^d\,  \delta, 
\quad  \forall y \in \mathbb{Q}_z\cap \Hcal_L.
\end{equation}
\end{prop}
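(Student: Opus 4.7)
The strategy is to iterate the monotonicity inequalities of Theorem \ref{theo:monotonicityspinO(N)} coordinate by coordinate, walking from $z$ to an arbitrary $y\in\mathbb{Q}_z\cap\Hcal_L$ with a controlled loss at each step. By $O(N)$-invariance of the Gibbs measure, $\langle\varphi^i_o\varphi^i_x\rangle$ is independent of $i$, so $\langle\varphi_o\cdot\varphi_x\rangle = N\langle\varphi^1_o\varphi^1_x\rangle$ and the two monotonicity statements in Theorem \ref{theo:monotonicityspinO(N)} transfer verbatim to $\langle\varphi^1_o\varphi^1_\cdot\rangle$ after dividing by $N$. By torus and coordinate reflection symmetries of the spin $O(N)$ model, I may also assume $z_i\in[0,L/2]$ for every $i$.

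\textbf{Step 1 (axes).} The first inequality of Theorem \ref{theo:monotonicityspinO(N)} gives $\langle\varphi^1_o\varphi^1_{z_i\be_i}\rangle\geq \langle\varphi^1_o\varphi^1_z\rangle\geq 1/N-\delta$ for each $i$. Taking $y=o$ in the second (averaged) statement of Theorem \ref{theo:monotonicityspinO(N)} shows that $n\mapsto \langle\varphi^1_o\varphi^1_{n\be_i}\rangle$ is non-increasing on $(0,L/2]$, so $\langle\varphi^1_o\varphi^1_{n\be_i}\rangle\geq 1/N-\delta$ whenever $0<n\leq z_i$. This already settles the case in which $y\in\Hcal_L$ lies on a cartesian axis, which is one of the two subcases of $y\in\Hcal_L$.

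\textbf{Step 2 (interpolation).} For $y$ with every coordinate non-zero, define the interpolating points $y^{(k)}=(y_1,\dots,y_k,z_{k+1},\dots,z_d)$, so $y^{(0)}=z$ and $y^{(d)}=y$. Since $y\in\mathbb{Q}_z$, after possibly reflecting coordinates I may take $y_{k+1}\in(0,z_{k+1}]\subset(0,L/2]$. Applying the second inequality of Theorem \ref{theo:monotonicityspinO(N)} in direction $\be_{k+1}$ with base point $y'=(y_1,\dots,y_k,0,z_{k+2},\dots,z_d)$, and comparing $n=z_{k+1}$ to $n=y_{k+1}$, yields
\begin{equation*}
\langle\varphi^1_o\varphi^1_{y^{(k+1)}}\rangle + \langle\varphi^1_o\varphi^1_{y_{k+1}\be_{k+1}}\rangle \;\geq\; \langle\varphi^1_o\varphi^1_{y^{(k)}}\rangle + \langle\varphi^1_o\varphi^1_{z_{k+1}\be_{k+1}}\rangle.
\end{equation*}
Combining this with the trivial bound $\langle\varphi^1_o\varphi^1_{y_{k+1}\be_{k+1}}\rangle\leq \langle\varphi^1_o\varphi^1_o\rangle=1/N$ (since $|\varphi_x|=1$) and the axis bound from Step 1, I obtain $\langle\varphi^1_o\varphi^1_{y^{(k+1)}}\rangle\geq \langle\varphi^1_o\varphi^1_{y^{(k)}}\rangle-\delta$. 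Iterating $d$ times gives $\langle\varphi^1_o\varphi^1_y\rangle\geq 1/N-(d+1)\delta$, which implies the claimed bound $1/N-2^d\delta$ (since $d+1\leq 2^d$ for $d\geq 1$).

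\textbf{Anticipated obstacle.} The main thing to check carefully is that the indices $y_{k+1}$ and $z_{k+1}$ used in each application of the monotonicity lie in the admissible range $(0,L/2]$ required by Theorem \ref{theo:monotonicityspinO(N)}. The upper bound follows from $\|z\|_\infty\leq L/2$ and $y\in\mathbb{Q}_z$ (after reducing to the positive octant using torus reflections), while $y_{k+1}\neq 0$ is precisely the content of $y\in\Hcal_L$ in the non-axis case; the axis case is handled separately in Step 1, which is why the hypothesis $y\in\Hcal_L$ (rather than just $y\in\mathbb{Q}_z$) is needed.
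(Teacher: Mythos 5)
Your proof is correct and follows essentially the same route as the paper's: reduce to the positive octant by torus symmetry, then interpolate from $z$ to $y$ one coordinate at a time, controlling the loss at each step by combining the two monotonicity inequalities of Theorem \ref{theo:monotonicityspinO(N)} with the a priori bound $\langle\varphi^1_o\varphi^1_x\rangle\leq 1/N$. Your additive bookkeeping (lower-bounding the axis term $\langle\varphi^1_o\varphi^1_{z_{k+1}\be_{k+1}}\rangle$ directly by $1/N-\delta$ rather than by $\langle\varphi^1_o\varphi^1_{y^{(k)}}\rangle$, as the paper does when iterating $t\mapsto 2t-1/N$) even yields the slightly sharper constant $1/N-(d+1)\delta$ in place of the stated $1/N-2^d\delta$.
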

\begin{proof}
Assume that $L \in 2 \mathbb{N}$,  $z \in \T_L$,
$\langle \varphi^1_o \varphi^1_z  \rangle \geq 1/N - \delta$,
and that $y \in \mathbb{Q}_z\cap \Hcal_L$.
We will prove the statement under the assumption that
 $z, y \in \T_L$ are such that  $z \cdot \boldsymbol{e}_i > 0$, and
$y \cdot \boldsymbol{e}_i > 0$ for every $i \in \{1, \ldots, d\}$.
By the torus symmetry, this will imply (\ref{eq:conclusioninside}) for any $z \in \T_L\cap \Hcal_L$ and $y \in \mathbb{Q}_z\cap \Hcal_L.$
Also we will assume that $y \neq o$, in which case the proposition trivially holds since $\langle \varphi^1_o \varphi^1_y  \rangle_{L, N, \beta}  = 1$. If $y$ lies on a coordinate axis then the result is automatic by Theorem \ref{theo:monotonicityspinO(N)}. Suppose $y$ does not lie on a coordinate axis,
 for each $i \in \{1, \ldots, d\}$, define
$$
D_i := z_i - y_i,
$$
and note that, by assumption, $D_i \in \mathbb{N}$.
Since $z,y \in \mathbb{Z}^d$, there must exist a path of nearest neighbour sites of $\mathbb{Z}^d$ consisting of at most $d$ segments,
$$
 (z^1_0, z^1_1, \ldots, z^1_{D_1}), \quad 
 (z^2_0, z^2_1, \ldots, z^2_{D_2}),  \quad 
 \ldots  \quad 
 (z^d_0, z^d_1, \ldots, z^d_{D_d}),
$$
 such that, for each $i \in \{1, \ldots, d\}$ and $j \in [1, D_i]$,
 $$
z^i_{j-1} -  z^i_{j} =  \boldsymbol{e}_i, \quad z^i_j \in \mathbb{Z}^d,
 $$
 and, for any $i \in \{1, \ldots, d-1\}$,
 \begin{equation}\label{eq:claim11}
  z_{D_i}^{i}  = z_{0}^{i+1}, \quad  \quad z_0^1 = z, \quad  \quad z_{D_d}^{d} = y.
 \end{equation}
See for example Figure \ref{Fig:pathandslices}.
\begin{figure}[t]
\includegraphics[scale=0.41]{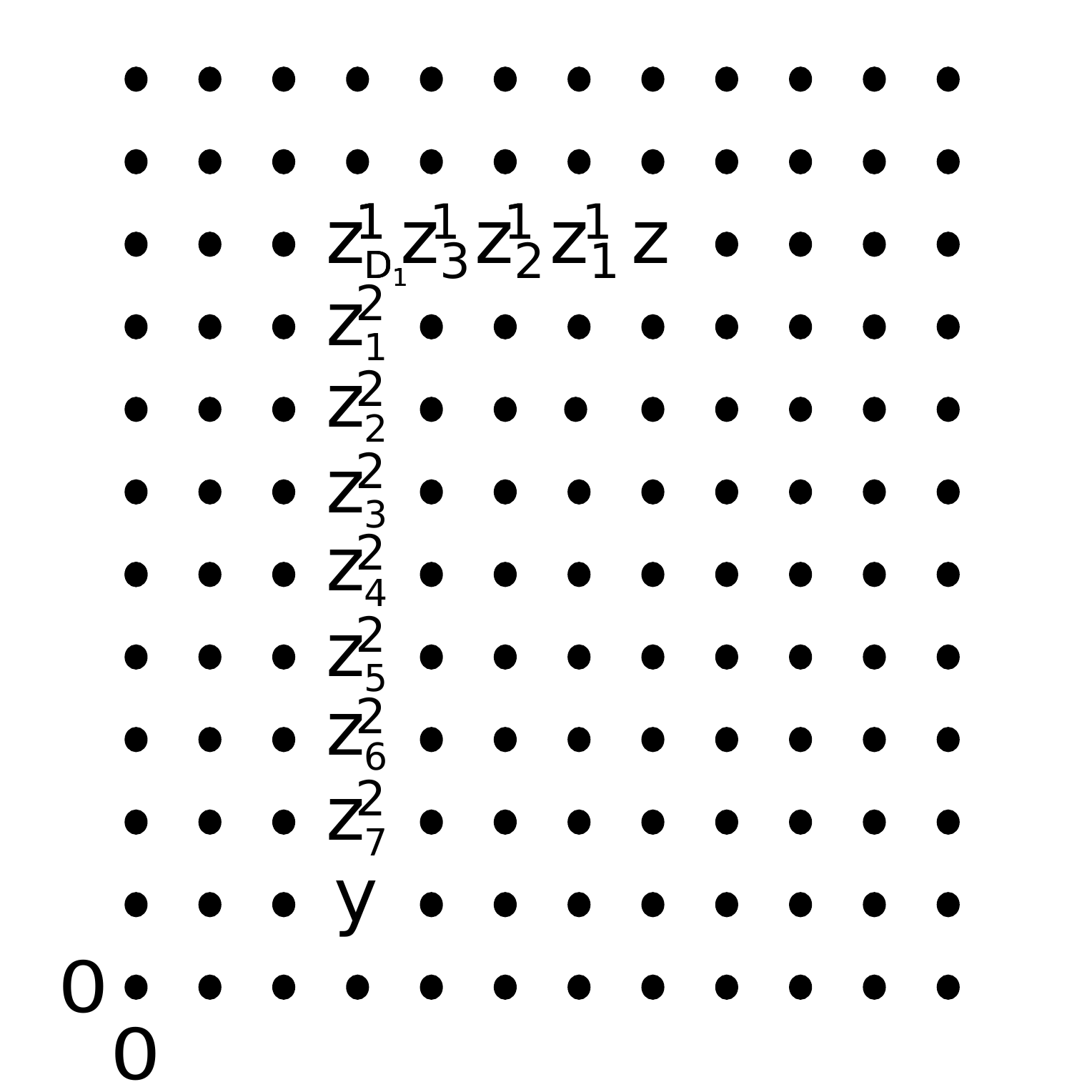}
\includegraphics[scale=0.41]{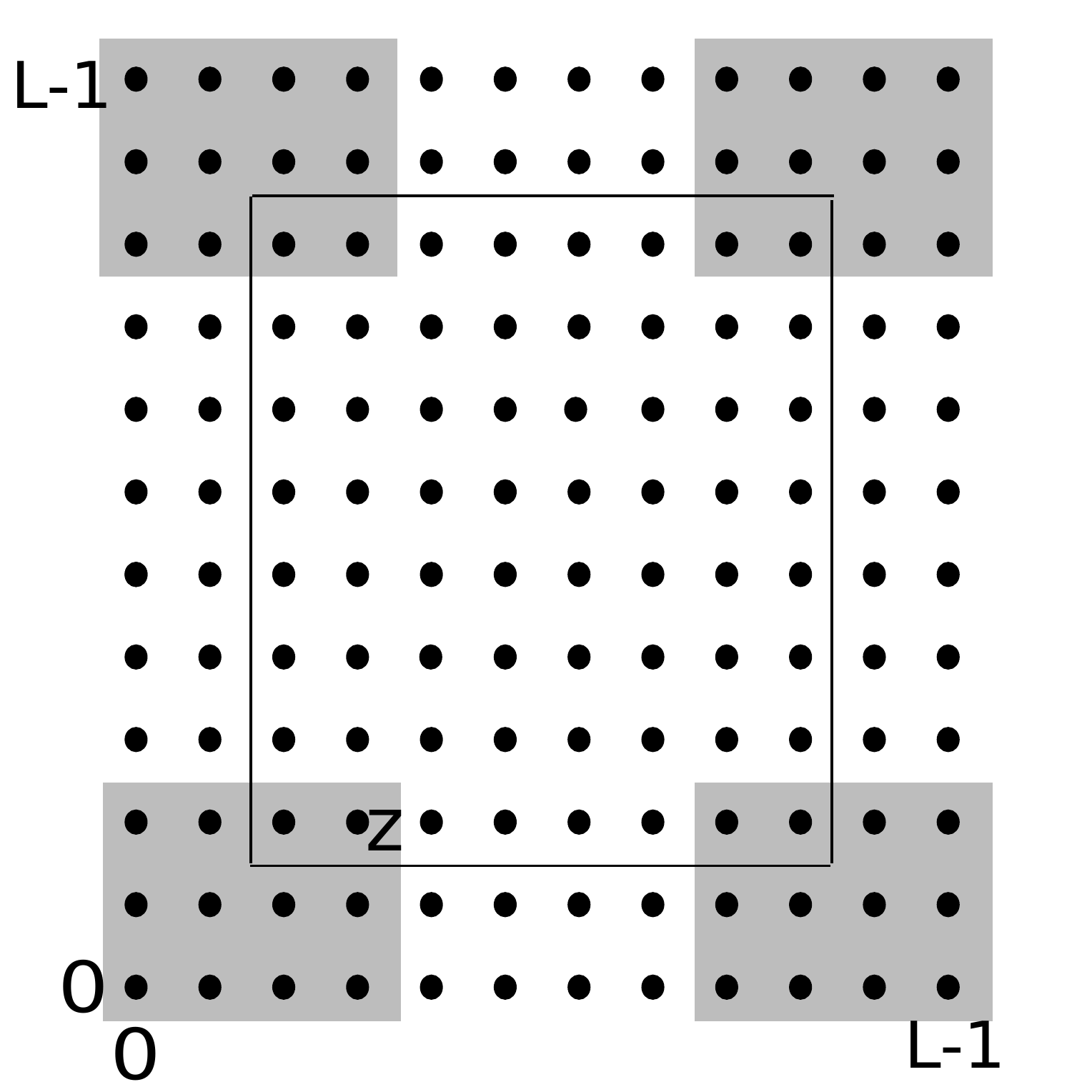}
\centering
\caption{
\textit{Left:} A slice of the torus
and a path in $\mathbb{Z}^d$ consisting of two segments and connecting $z \in \T_L$ to $y \in \mathbb{Q}_z$ are represented. We assume that the side length of the torus is much larger than 12.
\textit{Right:} 
A torus of side length $L =12$. The straight lines are the boundaries of the set $\mathbb{S}_{2, L}$ (periodic boundary conditions are taken into account)
and the dark region corresponds to the set $\mathbb{Q}_z$.
}
\label{Fig:pathandslices}
\end{figure}
We claim that, for any $i \in \{1, \ldots, d\}$,
\begin{equation}\label{eq:claim0}
\langle \varphi^1_o \varphi^1_{z^{i}_{D_{i}}} \rangle  \, \geq \, 
2 \, \langle \varphi^1_o \varphi^1_{z^{i}_{0}} \rangle \, - \, 1/N.
\end{equation}
The claim implies the proposition, since,
using (\ref{eq:claim0})  $d$ times and recalling (\ref{eq:claim11}), we obtain that,
\begin{align*}
\langle \varphi^1_o \varphi^1_y \rangle &  =
\langle \varphi^1_o \varphi^1_{z^{d}_{D_{d}}} \rangle 
\geq 
2  \langle \varphi^1_o \varphi^1_{z^{d}_{0}} \rangle  - 1/N 
= 2  \langle \varphi^1_o \varphi^1_{z^{d-1}_{D_{d-1}}} \rangle  - 1/N   \\
 & \geq \ldots \geq 
 2^d\, \langle \varphi^1_o \varphi^1_{z^{1}_{0}} \rangle  \, - \,  (2^d -1)/N 
 =
2^d\, \langle \varphi^1_o \varphi^1_{z} \rangle  \, - \,  (2^d -1)/N 
\geq 1/N -  2^d \, \delta.
\end{align*}
We now prove  (\ref{eq:claim0}).
For the next inequalities we use both inequalities in Theorem \ref{theo:monotonicityspinO(N)} which applies as $y\in \Hcal_L$ so we do not use the ``n=0" case,
$$
2 \langle \varphi^1_o \varphi^1_{z^i_{0}} \rangle
\leq 
\langle \varphi^1_o \varphi^1_{z^i_{0}} \rangle +
\langle \varphi^1_o \varphi^1_{(z^i_{0} \cdot \boldsymbol{e}_i) \boldsymbol{e}_i)} \rangle,
\quad 
\langle \varphi^1_o \varphi^1_{z^i_{0}} \rangle
+ \langle \varphi^1_o \varphi^1_{(z^i_{0} \cdot \be_i) \be_i} \rangle
\leq 
\langle \varphi^1_o \varphi^1_{z^i_{D_i}} \rangle
+ \langle \varphi^1_o \varphi^1_{(z^i_{D_i} \cdot \be_i) \be_i} \rangle.
$$
Since by symmetry we have that 
$
\forall z \in \T_L$,  $\langle \varphi^1_o \varphi^1_{z} \rangle =
\frac{\langle \varphi_o \cdot \varphi_{z} \rangle}{N} \leq 1/N,
$ combining the two inequalities above we deduce  (\ref{eq:claim0}) and conclude the proof.
\end{proof}

Now we turn to vertices in $\T_L\setminus \Hcal_L$.

\begin{prop}\label{prop:goinginsideH}
Under the same assumptions of Proposition \ref{prop:RPsites}, we have that,
for any $z \in \T_L \cap \Hcal_L$ such that $\|   z  \|_{ \infty} \leq \frac{L}{2}$,
\begin{equation}\label{eq:conclusioninside2}
\langle \varphi^1_o \varphi^1_z \rangle_{L, N, \beta}  \geq 1/N - \delta \implies  \langle \varphi^1_o \varphi^1_x  \rangle_{L, N, \beta} \geq (1/N - 2^d\,  \delta)\left(\frac{\beta e^{-2d\beta}}{N}\right)^{d-2}, 
\quad  \forall x \in \mathbb{Q}_z \setminus \Hcal_L.
\end{equation}
\end{prop}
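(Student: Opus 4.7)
The plan is to reduce from an arbitrary $x\in\mathbb{Q}_z\setminus\Hcal_L$ to a nearby point $y\in\mathbb{Q}_z\cap\Hcal_L$ via at most $d-2$ nearest-neighbour moves in $\T_L$, and to iterate a neighbour-comparison inequality along the resulting sequence. Proposition \ref{prop:goinginside} will then furnish the bound at $y$, and the exponent $d-2$ in the statement corresponds to the maximal number of zero coordinates a vertex in $\T_L\setminus\Hcal_L$ can have.

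First I would note that if $z$ lies on a cartesian axis then $\mathbb{Q}_z\subset\Hcal_L$ and the conclusion is vacuous, so I may assume $|z_i|\geq 1$ for every $i$. Let $S_0:=\{i:x_i=0\}$ and $k:=|S_0|$; since $x$ has at least two non-zero coordinates, $1\leq k\leq d-2$. Writing $\eta_i:=\sign(z_i)$ for $i\in S_0$ and enumerating $S_0=\{i_1,\ldots,i_k\}$, I build the sequence
\[
x=x^{(0)},\,x^{(1)},\,\ldots,\,x^{(k)}=y,\qquad x^{(j+1)}:=x^{(j)}+\eta_{i_{j+1}}\boldsymbol{e}_{i_{j+1}}.
\]
Each $x^{(j)}$ remains in $\mathbb{Q}_z$ (the newly flipped coordinate has modulus $1\leq|z_{i_{j+1}}|$) and $y$ has all coordinates non-zero, so $y\in\mathbb{Q}_z\cap\Hcal_L$. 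The key ingredient is the neighbour comparison: for any $u,v\in\T_L$ with $u\sim v$ and $u\neq o$,
\begin{equation}\label{eq:Ncomp}
\langle\varphi^1_o\varphi^1_u\rangle_{L,N,\beta}\;\geq\;\frac{\beta e^{-2d\beta}}{N}\,\langle\varphi^1_o\varphi^1_v\rangle_{L,N,\beta}.
\end{equation}
Granting \eqref{eq:Ncomp}, iterating along the sequence $x^{(0)},\ldots,x^{(k)}$ and invoking Proposition \ref{prop:goinginside} at $y$ gives
\[
\langle\varphi^1_o\varphi^1_x\rangle\;\geq\;\Bigl(\tfrac{\beta e^{-2d\beta}}{N}\Bigr)^{k}\langle\varphi^1_o\varphi^1_y\rangle\;\geq\;\Bigl(\tfrac{\beta e^{-2d\beta}}{N}\Bigr)^{d-2}\bigl(\tfrac{1}{N}-2^{d}\delta\bigr),
\]
where the last inequality uses $k\leq d-2$ together with the elementary bound $\beta e^{-2d\beta}/N\leq 1$.

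To prove \eqref{eq:Ncomp} I would pass to the path representation of Proposition \ref{prop:equivalence}, writing $\langle\varphi^1_o\varphi^1_u\rangle=Z_L(o,u)/Z_L(\emptyset)$ with $U$ as in \eqref{eq:weightSpin}. Let $\mathcal{S}^{*}_{\{o,v\}}\subset\mathcal{S}_{\{o,v\}}$ be the subset of configurations with $n_u(w)=0$, and consider the injective map $\phi:\mathcal{S}^{*}_{\{o,v\}}\to\mathcal{S}_{\{o,u\}}$ that adds a single colour-$1$ link on the edge $\{v,u\}$, paired at $v$ with the existing walk-end and unpaired at $u$. Because $n_u(w)=0$ forces $m_{\{v,u\}}(w)=0$ and $U_u(w)=1$, a direct calculation from Definition \ref{def:measure} gives $\mu(\phi(w))/\mu(w)=\beta\cdot\Gamma(N/2)/(2\Gamma(1+N/2))=\beta/N$ uniformly in $w$, hence
\[
Z_L(o,u)\;\geq\;\tfrac{\beta}{N}\,Z^{*}_L(o,v),\qquad Z^{*}_L(o,v):=\sum_{w\in\mathcal{S}^{*}_{\{o,v\}}}\mu(w).
\]
It remains to establish $Z^{*}_L(o,v)\geq e^{-2d\beta}\,Z_L(o,v)$. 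In the spin picture this corresponds to the pointwise bound $I(|h_u|)\leq e^{2d\beta}$ on the single-site integral $I(t):=\int_{\mathbb{S}^{N-1}}e^{\beta t\varphi^{1}}d\sigma(\varphi)$ (valid since $|h_u|\leq 2d$), which is the multiplicative factor produced when one integrates out $\varphi_u$ in $Z_L(o,v)$, while the corresponding factor for $Z^{*}_L(o,v)$ is $I(0)=1$.

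The hard part will be justifying this last inequality rigorously. Since $\varphi^1_o\varphi^1_v$ is not pointwise non-negative, the pointwise bound $I(|h_u|)\leq e^{2d\beta}$ does not transfer to the ratio of the two spin integrals by a direct pointwise comparison of integrands. The argument has to be carried out in the path representation, where the summands $\mu(w)$ are manifestly non-negative, by stratifying $Z_L(o,v)=\sum_{k\geq 0}Z_L(o,v)|_{n_u=k}$ according to the number of visits of $u$ and controlling the contribution of each stratum relative to $Z^{*}_L(o,v)=Z_L(o,v)|_{n_u=0}$ through the local combinatorics (multiplicities and pairings) of links at $u$.
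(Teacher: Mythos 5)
Your proposal is correct and follows essentially the same route as the paper: the paper likewise compares $\langle\varphi^1_o\varphi^1_x\rangle$ to $\langle\varphi^1_o\varphi^1_y\rangle$ for a neighbour $y$ by removing/adding the last link of the $1$-walk (giving the factor $\beta/N$) and by comparing the models on $\T_L$ and on $\T_L\setminus\{x\}$ (giving the factor $e^{-2d\beta}$), then iterates this neighbour bound along a chain of at most $d-2$ steps into $\mathbb{Q}_z\cap\Hcal_L$ and invokes Proposition \ref{prop:goinginside}. The one step you leave as a sketch, namely $Z^{*}_L(o,v)\geq e^{-2d\beta}Z_L(o,v)$, is precisely the step the paper also disposes of in one line (via the pointwise bound $\bigl|\sum_{y\sim x}\varphi_x\cdot\varphi_y\bigr|\leq 2d$ applied to the unnormalised spin integrals), so you have correctly located the only delicate point rather than missed an idea.
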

\begin{proof}
To begin, recall the definition of $\mathcal{S}_A$, which was introduced in Definition \ref{def:partition functions}.
For $x\in \mathbb{Q}_z\setminus\Hcal_L$ consider $F:\Scal_{\{o,x\}}\to \cup_{y\sim x} \Scal_{\{o,y\}}$ that acts on $w\in\Scal_{\{o,x\}}$ by removing the last link of the 1-walk from $o$ to $x$ (the link incident to $x$). Note that $F$ is well defined as the 1-walk must have at least two links due to $\mathbb{Q}_z \setminus \Hcal_L$ not containing neighbours of $o$. We claim that $F$ is a bijection. Indeed, $F$ is injective as if two configurations differ at the last links of their 1-walks then their images under $F$ are in different $\Scal_{\{o,y\}}$'s. On the other hand, if the configurations differ elsewhere then their images under $F$ still differ as these links are not changed by $F$. Also $F$ is surjective as any $w\in \Scal_{\{o,y\}}$, $y\sim x$, is the image of the configuration that coincides with $w$ except that the 1-walk is extended by one link on $\{x,y\}$.

Now consider the subset of configurations in $\Scal_{\{o,x\}}$ such that there is only one link incident to $x$ (the last link of the 1-walk). For such a configuration $\mu_{\T_L,N,\beta,U}(w)$ differs from $\mu_{\T_L,N,\beta,U}(F(w))$ by a factor of $(\beta/2)(2/N)=\beta/N$ and $F(w)$ has no links incident to $x$. This gives
\begin{equation}
\mu_{\T_L,N,\beta,U}(\Scal_{\{o,x\}})\geq \frac{\beta}{N}\sum_{y\sim x} \mu_{\T_L\setminus \{x\},N,\beta,U}(\Scal_{\{o,y\}}),
\end{equation}
where we used the notation  $\T_L \setminus A$ for the graph which is obtained from $(\T_L, \E_L)$ by removing all the sites $A \subset \T_L$ and all the edges which are incident to it.
Now we use the spin representation. For any $\varphi\in (\mathbb{S}^{N-1})^{\T_L}$ we have $-2d\leq \sum_{y\sim x}\varphi_x\cdot\varphi_y\leq 2d$, hence
\begin{equation}
\begin{aligned}
Z^{spin}_{\T_L,N,\beta}\langle\varphi^1_o\varphi^1_x\rangle_{\T_L,N,\beta} \geq & \frac{\beta}{N}\sum_{y\sim x}  Z^{spin}_{\T_L\setminus \{x\},N,\beta}\langle\varphi^1_o\varphi^1_y\rangle_{\T_L\setminus \{x\},N,\beta}
\\
\geq &\frac{\beta}{N}\sum_{y\sim x} e^{-2d\beta}Z^{spin}_{\T_L,N,\beta}\langle\varphi^1_o\varphi^1_y\rangle_{\T_L,N,\beta}
\\
\geq &\frac{\beta e^{-2d\beta}}{N}\sum_{y\sim x} Z^{spin}_{\T_L,N,\beta}\langle\varphi^1_o\varphi^1_y\rangle_{\T_L,N,\beta},
\end{aligned}
\end{equation}
From which we obtain
\begin{equation}
\langle\varphi^1_o\varphi^1_x\rangle_{\T_L,N,\beta}\geq \frac{\beta e^{-2d\beta}}{N}\sum_{y\sim x} \langle\varphi^1_o\varphi^1_y\rangle_{\T_L,N,\beta}.
\end{equation}
Now if there is a $y\sim x$ such that $y\in \mathbb{Q}_z\cap \Hcal_L$ then we are done by Proposition \ref{prop:goinginside}, however this may not be the case. However, it is easily seen that for any $x\in\mathbb{Q}_z\setminus \Hcal_L$ there is a $y\in  \mathbb{Q}_z\cap \Hcal_L$ such that $\|x-y\|_1\leq d-2$. Hence we can repeat the same bound for $y\sim x$, (and possibly $y_1\sim y$ etc) until we have a site with at least one neighbour in $\mathbb{Q}_z\setminus \Hcal_L$ and then apply Proposition \ref{prop:goinginside} to this neighbour to obtain the result.
\end{proof}

The next lemma states that, if the Ces\`aro mean of the two-point function is close enough to $1/N$, which is a uniform upper bound of the two-point function, then there exists a vertex $z$ which is `far enough away' from any cartesian axis such that 
$\langle \varphi^1_o \varphi^1_{z} \rangle$ is `reasonably close' to $1/N$ as well. 
Define
$
\mathbb{B}_r := 
\big \{ z \in \mathbb{Z}^d : \| z \|_{\infty} \leq r   \big  \}
$,
where $\| \, \cdot \, \|_{\infty}$ is with respect to the  torus metric.
\begin{lem}\label{lem:positiveaverageimplies}
Suppose that $d \geq 2$ and $L\in2\N$. Assume that there exists a constant $\delta \in (0, 1)$ such that, 
\begin{equation}\label{eq:Condition2}
%\forall L \in 2\mathbb{N} \quad  \quad
 \frac{1}{|\T_{L}|} \sum\limits_{z \in \T_{L}} \langle \varphi^1_o \varphi^1_{z} \rangle \geq 1/N - \delta.
\end{equation}
Then,
\begin{equation}\label{eq:Conclusion}
\begin{aligned}
&\forall y \in \mathbb{B}_{L/8}\cap \Hcal_L  \quad \langle \varphi^1_o \varphi^1_{y} \rangle \geq 1/N  - 2^{2d} \, \delta.
\\
&\forall y \in \mathbb{B}_{L/8} \setminus \Hcal_L  \quad \langle \varphi^1_o \varphi^1_{y} \rangle \geq (1/N  - 2^{2d} \, \delta)\left(\frac{\beta e^{-2d\beta}}{N}\right)^{d-2}.
\end{aligned}
\end{equation}
\end{lem}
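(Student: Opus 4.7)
The plan is to combine the two preceding Propositions \ref{prop:goinginside} and \ref{prop:goinginsideH} with a simple pigeonhole argument. The strategy is to first locate a single ``corner'' vertex $z^\ast \in \T_L \cap \Hcal_L$ satisfying both $|z_i^\ast| \geq L/8$ for every coordinate $i \in \{1,\dots,d\}$ and $\langle \varphi^1_o \varphi^1_{z^\ast}\rangle \geq 1/N - 2^d \delta$. Since $|z_i^\ast| \geq L/8$ on every axis, the box $\mathbb{Q}_{z^\ast}$ contains $\mathbb{B}_{L/8}$, and since $\|z^\ast\|_\infty \leq L/2$ holds automatically on the torus, Proposition \ref{prop:goinginside} applied with $\delta$ there replaced by $2^d \delta$ yields $\langle \varphi^1_o \varphi^1_y\rangle \geq 1/N - 2^{2d}\delta$ for every $y \in \mathbb{Q}_{z^\ast} \cap \Hcal_L$, which in particular covers every $y \in \mathbb{B}_{L/8} \cap \Hcal_L$. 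The second line of (\ref{eq:Conclusion}) then follows verbatim by feeding this lower bound into Proposition \ref{prop:goinginsideH}.

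To produce $z^\ast$, I would first invoke the pointwise bound $\langle\varphi^1_o\varphi^1_z\rangle \leq 1/N$, a direct consequence of $|\varphi_z|=1$ and the $O(N)$-symmetry of the single-spin distribution (which gives $\langle \varphi^1_o\varphi^1_z\rangle = \langle \varphi_o\cdot\varphi_z\rangle / N \leq 1/N$, exactly as noted in the proof of Proposition \ref{prop:goinginside}). Hypothesis (\ref{eq:Condition2}) then rewrites as
\begin{equation*}
\sum_{z \in \T_L} \bigl(1/N - \langle \varphi^1_o\varphi^1_z\rangle\bigr) \leq L^d \delta,
\end{equation*}
and Markov's inequality forces
\begin{equation*}
\bigl|\{z \in \T_L : 1/N - \langle\varphi^1_o\varphi^1_z\rangle > 2^d \delta\}\bigr| < L^d / 2^d.
\end{equation*}
On the other hand, the ``corner set'' $A := \{z \in \T_L : |z_i| \geq L/8 \text{ for every } i\}$ is a subset of $\Hcal_L$ (all coordinates are nonzero) and has cardinality at least $(3L/4)^d$, which strictly exceeds $L^d / 2^d$ since $(3/4)^d > (1/2)^d$ for every $d \geq 1$. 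Hence $A$ cannot be entirely contained in the ``bad'' Markov set, and any $z^\ast \in A$ lying outside this set has the required properties.

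I do not foresee a substantive obstacle: the lemma reduces to a two-line pigeonhole followed by invocations of the two preceding propositions. The only point worth checking carefully is the combinatorial inequality $|A| > L^d / 2^d$, which is precisely what converts the constant $1/8$ in the hypothesis into the exponent $2d$ in the conclusion; any threshold strictly smaller than $(1 - 2^{-1/d})/2$ in place of $1/8$ would work equally well, so the statement has room to spare.
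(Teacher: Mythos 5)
Your proposal is correct and follows essentially the same route as the paper: the paper also uses the pointwise bound $\langle\varphi^1_o\varphi^1_z\rangle\leq 1/N$ together with an averaging argument (phrased there as a proof by contradiction rather than via Markov's inequality) over the corner set $\T_L\setminus\mathbb{S}_{L/8,L}$ of cardinality $(3L/4)^d$ to locate a vertex $z_L$ with $\langle\varphi^1_o\varphi^1_{z_L}\rangle\geq 1/N-2^d\delta$, and then concludes exactly as you do by noting $\mathbb{Q}_{z_L}\supset\mathbb{B}_{L/8}$ and applying Propositions \ref{prop:goinginside} and \ref{prop:goinginsideH}. The arithmetic $(3/4)^d>2^{-d}$ is the same comparison the paper makes when it checks $2^d(3/4)^d>1$.
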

\begin{proof}
In the whole proof we will assume that $d \geq 2$.
%To begin, observe that, 
%\begin{equation}\label{eq:cardinalities}
%\forall r\in \N, \quad \quad | \mathbb{B}_r | =(2r+1)^d.
%\end{equation}
For any $r, L  \in \mathbb{N}$, we define the set
$$
 \mathbb{S}_{r,L} := 
 \big \{     
 z \in \T_L \, \, : \, \,\exists i \in \{1, \ldots, d\} \, \, \mbox{s.t.}  \, \, z \cdot \boldsymbol{e}_i < r \mbox{ or } 
 L - z \cdot \boldsymbol{e}_i \geq r
 \big \}.
 $$
 See Figure \ref{Fig:pathandslices} for a graphical representation of $\mathbb{S}_{r,L}$.
%This set can be viewed as the union of $d$ 
% parallelepipeds, with each parallelepiped being parallel to one Cartesian axis, having axis of length $L$ and containing only sites with odd coordinates. These parallelepipeds overlap in an odd hypercube of side length $2r +1$.
A simple computation shows that, if $L \in 2 \mathbb{N}$, and $r \in (0, L/2) \cap \mathbb{N}$, then,
\begin{equation}\label{eq:cardinalityslice}
 | \T_L \setminus  \mathbb{S}_{r,L} | =   (L-2r)^d.
 \end{equation}
From now on we set $r=L/8$ (which may not be an integer) and $L\in 2\N$.
We claim that, under the assumptions of the theorem, the following holds,
\begin{equation}\label{claim1}
%\forall L \in 2 \mathbb{N}, \quad 
\exists z_L \in 
\T_{L} \setminus 
\mathbb{S}_{r,L} \, \, \, \mbox{ s.t. } \, \,  \langle \varphi^1_o  \varphi^1_{z_L} \rangle \geq 1/N- 2^d \,  \delta.
\end{equation}
The proof of claim (\ref{claim1}) is by contradiction. Assume that (\ref{claim1}) is false, namely that 
$$
 %\forall L \in 2 \mathbb{N}, \quad   
 \forall z \in \T_L \setminus \mathbb{S}_{r, L} \quad  
 \langle \varphi^1_o  \varphi^1_{z} \rangle  < 1/N - 2^d  \, \delta,
$$
under the assumptions of the theorem.
Then, (\ref{eq:cardinalityslice}) (and recalling that we have set $r=L/8$), we obtain that,
\begin{align*}
\sum\limits_{z \in \T_L} \langle \varphi^1_o  \varphi^1_{z} \rangle   & < \, \, \big | \T_L \setminus \mathbb{S}_{r, L} \big | \, (\tfrac1N - 2^d \delta) \,\,  + \, \,
\tfrac1N \big ( \, |\T_L| - \big | \T_L \setminus \mathbb{S}_{r, L}| \big )
 \\
& = 
L^d \, \,  \Big (\, \frac{1}{N} \, -\,  2^d \, \delta \, \big(\frac{3}{4}\big)^d \, \Big ) <
L^d \,  (\, \frac{1}{N}  -   \delta  )
 \end{align*}
  This violates the hypothesis of the theorem and, thus, we obtain the desired contradiction. This proves (\ref{claim1}).
Note that since $z_L \in  \T_L \setminus \mathbb{S}_{r,L}$, we have that $z\in \Hcal_L$ and $\mathbb{Q}_{z_L} \supset \mathbb{B}_{r}.$
From  (\ref{claim1}) and Propositions \ref{prop:goinginside} and \ref{prop:goinginsideH} we deduce that, 
\begin{equation*}
\begin{aligned}
&\forall y \in  \mathbb{B}_{L/8}\cap \Hcal_L, \quad  \langle \varphi^1_o  \varphi^1_{y} \rangle \geq  1/N - 2^{2d}  \delta,
\\
&\forall y \in \mathbb{B}_{L/8} \setminus \Hcal_L  \quad \langle \varphi^1_o \varphi^1_{y} \rangle \geq (1/N  - 2^{2d} \, \delta)\left(\frac{\beta e^{-2d\beta}}{N}\right)^{d-2}.
\end{aligned}
\end{equation*}
This concludes the proof.
\end{proof}

The next theorem is a very classical result which was proved in  \cite{FrohlichSimonSpencer}. 
\begin{thm}[Fr\"ohlich, Simon and Spencer (1976)]\label{theo:classical}
Consider the spin $O(N)$ model on the torus of side length $L$ identified with $\Z^d/L\Z^d$, with inverse temperature $\beta \geq 0$, and  $N \in \mathbb{N}_{>0}$.
When $d \geq 3$,  there exists $\beta_0 < \infty$ such that, for any $\beta\geq\beta_0$, 
\begin{equation}
\liminf_{\substack{L\to\infty : \\ L \mbox{ \footnotesize even }}}\frac{1}{|\T_{L}|}  \sum\limits_{z \in \T_{L}} \langle  \varphi^1_o \, \,  \varphi^1_z \rangle_{L, N, \beta}\geq \frac{1}{N} - \frac{\beta_0}{N} \frac{1}{\beta}.
\end{equation}
\end{thm}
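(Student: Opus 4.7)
The plan is to follow the classical Fröhlich–Simon–Spencer strategy, which combines a Plancherel sum rule arising from the sphere constraint $|\varphi_x|=1$ with the infrared bound (Gaussian domination) derived from reflection positivity in planes of sites, i.e.\ from Proposition~\ref{prop:RPsites}. Writing $\T_L^*$ for the dual torus and $\hat{G}_L(p) := \sum_{z \in \T_L} \langle \varphi_o^1 \varphi_z^1 \rangle_{L,N,\beta}\, e^{-i p \cdot z}$, the sum rule is Plancherel's identity evaluated at $z=0$:
$$\frac{1}{N} \;=\; \langle (\varphi_o^1)^2 \rangle_{L,N,\beta} \;=\; \frac{1}{|\T_L|} \sum_{p \in \T_L^*} \hat{G}_L(p),$$
where the first equality uses $\sum_{i=1}^N (\varphi_o^i)^2 = 1$ combined with the $O(N)$ symmetry. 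The quantity to bound is precisely the $p=0$ contribution, since $\hat{G}_L(0)/|\T_L|$ equals the Cesàro mean on the left-hand side of the statement.

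Next I would establish the infrared bound: for every $p \in \T_L^* \setminus \{0\}$,
$$\hat{G}_L(p) \;\leq\; \frac{c_N}{\beta\, E(p)}, \qquad E(p) := \sum_{j=1}^d (1 - \cos p_j),$$
with a constant $c_N$ depending only on $N$. This is the content of Gaussian domination: after a Gaussian shift, one shows $Z^{spin}_{\T_L,N,\beta}(\bh) \leq Z^{spin}_{\T_L,N,\beta}(\mathbf{0})$ for every real source $\bh : \T_L \to \mathbb{R}$, which is extracted by iteratively applying the Cauchy–Schwarz inequality of Proposition~\ref{prop:RPsites} through every reflection plane bisecting sites (a chessboard-type telescoping over pairs of coordinate hyperplanes). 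Expanding this inequality to second order around $\bh = 0$ and dualising the resulting quadratic form yields the pointwise bound on $\hat{G}_L(p)$, after diagonalising the lattice Laplacian in the Fourier basis.

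Combining the two steps,
$$\frac{1}{N} \;\leq\; \frac{1}{|\T_L|} \sum_{z \in \T_L} \langle \varphi_o^1 \varphi_z^1 \rangle_{L,N,\beta} \;+\; \frac{c_N}{\beta\, |\T_L|} \sum_{p \neq 0} \frac{1}{E(p)},$$
so rearranging and taking $L \to \infty$,
$$\liminf_{L \to \infty} \frac{1}{|\T_L|} \sum_{z \in \T_L} \langle \varphi_o^1 \varphi_z^1 \rangle_{L,N,\beta} \;\geq\; \frac{1}{N} \;-\; \frac{c_N}{\beta}\, I_d,$$
with $I_d := \int_{(-\pi,\pi]^d} (2\pi)^{-d} E(p)^{-1}\, dp$. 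Crucially, $I_d < \infty$ precisely when $d \geq 3$, since $E(p) \asymp |p|^2$ near the origin; this is where the dimension hypothesis enters (and where it must). Choosing $\beta_0 := N c_N I_d$ gives the desired $\tfrac{1}{N} - \tfrac{\beta_0}{N\beta}$. The principal obstacle is the second step: the reflection positivity estimates of Proposition~\ref{prop:RPsites} are inherently local (they compare two halves of the torus across one plane), and converting them into the global Fourier-space bound on $\hat{G}_L(p)$ requires the standard but delicate Gaussian domination argument, which is the genuine analytic engine behind the theorem.
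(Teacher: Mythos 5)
The paper does not prove Theorem~\ref{theo:classical} at all: it is quoted as a classical result and attributed to Fr\"ohlich, Simon and Spencer, with the cited formulation taken from Dyson--Lieb--Simon. So there is no internal proof to compare against; your sketch is measured against the standard argument in the literature, and it reproduces that argument correctly in outline --- the sum rule $\langle (\varphi_o^1)^2\rangle = 1/N$ via Plancherel, the infrared bound $\hat G_L(p)\le c_N/(\beta E(p))$ via Gaussian domination, and the $L\to\infty$ Riemann sum giving $I_d<\infty$ exactly for $d\ge 3$, with $\beta_0 = Nc_NI_d$ matching the stated form of the bound. One technical correction: Gaussian domination $Z^{spin}(\bh)\le Z^{spin}(\zero)$ is extracted by iterating reflection positivity in planes \emph{bisecting edges} (the source $\bh$ enters through the bond terms $|\varphi_x-\varphi_y+h_x-h_y|^2$, which is why the crossing bonds must be split by the reflection plane), not Proposition~\ref{prop:RPsites}, which is reflection through sites; the spin $O(N)$ model is reflection positive for both types, and the edge version for spins is the classical one (cf.\ the remark after Theorem~\ref{theo:monotonicityspinO(N)} and \cite[Chapter 10]{FriedliVelenik}), so this is a mislabeling rather than a gap, but the plane type does matter for the telescoping to close.
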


We are now ready to prove Theorem \ref{theo:pointwise}.
\begin{proof}[\textbf{Proof of Theorem \ref{theo:pointwise}}]
Let $\beta_0$ be the same constant as in Theorem \ref{theo:classical}, let  $\delta >0 $ be such that  $2^{2d} \delta < 1/N$. 
If $\beta$ is large enough such that $\beta_0/(N \beta) < \delta$, we deduce from  Theorem
 \ref{theo:classical}  that  there exists  $L_0  = L_0(\beta, d) < \infty$ such that for any even $L > L_0$, 
$$
\frac{1}{|\T_L|} \sum\limits_{z \in \T_L} \langle\varphi^{1}_o\varphi^{1}_z\rangle_{L,N,\beta}   \geq \frac{1}{N} -   \delta.
$$ 
Since 
$\langle \varphi^1_o \varphi^1_z \rangle \leq 1/N$ for any $z \in \T_L$,
applying Lemma \ref{lem:positiveaverageimplies} we deduce that for any even $L > L_0$, any $z \in \mathbb{B}_{L/8}\cap \Hcal_L$, 
$\langle \varphi^1_o \varphi^1_z \rangle \geq 1/N - 2^{2d} \delta > 0$ and for any $z \in \mathbb{B}_{L/8}\setminus \Hcal_L$, 
$\langle \varphi^1_o \varphi^1_z \rangle \geq (1/N - 2^{2d} \delta)(\beta e^{-2d\beta}/N)^{d-2} > 0$.
This concludes the proof.
\end{proof}

\section*{Acknowledgements}
B. Lees acknowledges support from the Alexander von Humboldt foundation.
 L. Taggi acknowledges support  from the DFG German Research Foundation BE 5267/1 and from the  EPSRC Early Career Fellowship EP/N004566/1.
The authors thank the two anonymous referees for carefully reading the paper and their useful suggestions.

\appendix

\section{Proof of Proposition \ref{prop:equivalence}}

\begin{proof}
For any $A \subset \mathcal{V}_x$, define,
\begin{equation}\label{eq:partition function2}
Z_{N, \beta}^{spin}(A)  = 
\Big  (  \,  \prod_{x \in \mathcal{V}} 
\int_{\mathbb{S}^{N-1}} d \varphi_x    \,   \Big  )  \,
\big ( \, \prod_{x \in A} \varphi^1_x \,  \big ) \, 
e^{- \beta H_{N}(\varphi)}.
\end{equation}
We will prove that, for any $N \in \mathbb{N}_{>0}$, $A \subset \mathcal{V}$, $\beta \geq 0$, 
under the choice of the weight function $U$ as in Proposition \ref{prop:equivalence}, we have that,
\begin{equation}
Z_{N, \beta}^{spin}(A) = Z_{N, \beta, U}(A).
\end{equation}
Thus,  by the definition of point-to-point function, Definition \ref{def:partition functions}, we will deduce Proposition \ref{prop:equivalence}.
The starting point of the expansion is the following identity, proved in \cite[Appendix A]{Chayes}, which holds for any $N \in \mathbb{N}_{>0}$ and  $n_1, n_2, \ldots n_N \in \mathbb{N}$,
\begin{equation}\label{eq:keypoint}
\int_{\mathbb{S}^{N-1}} 
(\varphi^{1})^{n_{1}}
\ldots
(\varphi^{N})^{n_{N}}
d \varphi
= 
\begin{cases}
 0  &\mbox{ \textit{if} }  n_{i} \in   2 \mathbb{N} + 1 \mbox{ \textit{for some} } i \in \{1, \ldots N \},\\
 \frac{\Gamma(\frac{N}{2}) \prod_{i=1}^{N} (n_{i} - 1)!! }{
 2^{\frac{n}{2}} \Gamma \big ((n+N)/2 \big )}
 & \mbox{ \textit{otherwise} },
\end{cases}
\end{equation}
where $d \varphi$ denotes the normalised uniform measure on $\mathbb{S}^{N-1}$, $n= n_1 + $ $\ldots$ $+n_N$, and $(n_i-1)!!$ is the double factorial, i.e. the number of ways to pair $n_i$ objects (hence $(-1)!!=1$).
Below, we will omit all sub-scripts to lighten the notation.
To begin, we re-write the exponential as follows,
\begin{equation}
\exp 
\Big \{   \beta
\sum\limits_{\{x,y\} \in \mathcal{E}} \varphi_x \cdot \varphi_y 
\Big \}
 \, = \, 
 \prod_{ \{x,y\} \in \mathcal{E}} \, \, 
 \prod_{i=1}^{N} \,  
 e^{  \beta \varphi_x^{i} \varphi_x^{i}}.
\end{equation}
For any $A \subset \mathcal{V}$, define
$$
\mathcal{M}_{\mathcal{G}} (A)  : = 
\{ m \in \mathcal{M}_{\mathcal{G}} \, \, : \, \, 
\forall x \in A, \sum_{e \in \mathcal{E} : x \in e}  m_e \, \,  \in 2 \mathbb{N}+1 \, \, \mbox{ and } \forall z \in \mathcal{V} \setminus A, \sum_{e \in \mathcal{E} : z \in e}  m_e \, \,  \in 2 \mathbb{N} \}.
$$
Now we expand as a Taylor series
and use  (\ref{eq:keypoint}) to restrict the sum to the terms which are not necessarily zero,
obtaining
\begin{multline}
Z^{spin}(A) = 
\sum\limits_{m^{1} \in \mathcal{M}_\mathcal{G}(A)}  \, 
\sum\limits_{m^{2} \in \mathcal{M}_\mathcal{G}(\emptyset)}
\ldots 
 \sum\limits_{m^{N} \in \mathcal{M}_\mathcal{G}(\emptyset)}
 \, 
 \Big ( 
 \prod_{e \in \mathcal{E}}    
 \frac{   \beta^{m_e^{1}+\dots+m_e^{N}}}{m_e^{1}! \ldots m_e^{N}!} 
 \Big ) \\
 \Big  (  \,  \prod_{x \in \mathcal{V}} 
\int_{\mathbb{S}^{N-1}} d \varphi_x    \,   \Big  )
\prod_{x \in \mathcal{V} \setminus A}
\Big ( \, 
(\varphi_x^{1})^{q_x^{1}}
\ldots 
(\varphi_x^{N})^{q_x^{N}}
\Big )
\prod_{x \in A } 
\Big ( \, 
(\varphi_x^{1})^{q_x^{1}+1}
(\varphi_x^{2})^{q_x^{2}}
\ldots 
(\varphi_x^{N})^{q_x^{N}}
\Big ).
\end{multline}
where we defined for any $x \in \mathcal{V}$, 
$
q^i_x(m) : = \sum_{e \in \mathcal{E} : x \in e} m^i_e.
$
We now rewrite the expression by first summing over all
$m \in \mathcal{M}_{\Gcal}(A)$ 
and $(m^1$, $m^2$, $\ldots$, $m^N)$,  such that, $m^1\in\Mcal_\Gcal(A)$, $m^i\in\Mcal_{\Gcal}(\emptyset)$, when $i\in\{2,\dots,N\}$,
 $m = \sum_{i=1}^{N} m^i$, 
and  $q_x = \sum_{i=1}^{N} q_x^i$, obtaining,
\begin{multline}
Z^{spin}(A) = 
\sum\limits_{m \in \mathcal{M}_{\mathcal{G}}(A)}  \,
\prod_{e \in \mathcal{E}} \Big ( \frac{\beta^{m_e}}{m_e!} \Big ) 
\sum\limits_{ \substack{ m^1 \in \mathcal{M}_\mathcal{G}(A) \\
 m^{i} \in \mathcal{M}_\mathcal{G}(\emptyset), i \geq 2 : 
 \\
 \sum_{i=1}^{N} m^i = m}} 
 \prod_{e \in \mathcal{E}} \Big (\frac{m_e!}{m_e^1! \ldots m_e^N!} \,   \Big )  \\ 
 \prod_{x \in \mathcal{V} \setminus A } 
 \Bigg ( \frac{\Gamma(\frac{N}{2})}{2^{\frac{q_x}{2}} \Gamma( (q_x+N)/2) }  \prod_{i=1}^{N} ( q^i_x - 1) !! \Bigg ) 
 \\  \prod_{x \in  A } 
   \Bigg (  \frac{\Gamma(\frac{N}{2})}{2^{\frac{q_x+1}{2}} \Gamma( (q_x+1 + N)/2) }   q_x^1!! \, \,  \prod_{i=2}^{N} ( q^i_x - 1) !!   \Bigg ) . 
\end{multline}
Above, the product right after the second sum can be interpreted as the number of colour assignments to the $m_e$ links which are parallel to the edge $e$ such that precisely $m_e^i$ links have colour $i$, for each $i=1, \ldots, N$.
Moreover, note that, if $q^i_x$ is an odd integer, then
$
q^i_x !!
$
is the number of ways $q^i_x$ links which are incident to $x$ can be ``paired" in such a way that only one link is unpaired and the remaining $(q^i_x-1)$ links are paired, while, if $q^i_x$ is an even integer, then
$(q^i_x-1)!!$ is the number of ways such $q^i_x$ links can be paired.
Thus, in the next step, 
 we replace the sum over $(m^i)_{i=1, \ldots, N}$ by the sum over $N$ possible colours for each link and the double factorial terms by the sum over all possible pairings of the links which are incident to each vertex. Recalling the definition of $n^i_x(m,c, \pi)$, which was given in (\ref{eq:numerofhits}), and putting 
 $n_x(m,c, \pi) = \sum_{i=1}^{N} n^i_x(m, c, \pi)$, we obtain that,
\begin{multline}
Z^{spin}(A) = \sum\limits_{ m \in \mathcal{M}_{\mathcal{G}}(A)} \, \prod_{e \in \mathcal{E}} \bigg ( \frac{\beta^{m_e}}{m_e!} \bigg )  \\  \sum_{c\in\Ccal_{\Gcal}(m)}\sum\limits_{\pi \in \mathcal{P}_{\mathcal{G}}(m,c) } \prod_{x \in \mathcal{V} \setminus A } \Bigg (   \frac{ \Gamma(N/2)}{2^{n_x(m,c,\pi)}\Gamma( n_x(m, c, \pi) + N/2)  }         \Bigg ) 
\prod_{x \in  A } \Bigg (   \frac{ \Gamma(N/2)}{ 2^{n_x(m,c, \pi)}\Gamma(n_x(m,c, \pi)  +  N/2)  }         \Bigg ) .
\end{multline}
In the previous expression we also used the fact that, 
if for a realisation $(m, c,  \pi) \in \mathcal{W}_{\mathcal{G}}(A)$,  $q_x$ links touch the vertex $x$, where  $x \in A$, this means that 
$q_x + 1 = 2 n_x(m,c, \pi)$.
Similarly, if 
for a realisation $(m, c, \pi) \in \mathcal{W}_{\mathcal{G}}(A)$,  $q_x$ links touch the vertex $x$, where $x \not\in A$, then $q_x = 2 n_x(m, c, \pi)$.
Plugging in the definition of the weight function $U_x$ (recall Definition \ref{def:measure} and the assumption of Proposition \ref{prop:equivalence}), the proof of Proposition \ref{prop:equivalence} is concluded.
\end{proof}

\nocite{*}


\begin{thebibliography}{9}
\bibitem{Betz} V. Betz and L. Taggi, \textit{Scaling limit of a self-avoiding walk interacting with random spatial permutations}, arXiv:1612.07234 (2017)

\bibitem{Betz2} 
V. Betz,
\textit{Random permutations of a regular lattice,}
J. Stat. Phys. 155, 1222-1248 (2014)


\bibitem{Betz3}
V. Betz,  H. Sch\"afer and L. Taggi,
\textit{Interacting self-avoiding polygons,}
arXiv: 1805.08517 (2018)



\bibitem{BenassiUeltschi} C. Benassi and D. Ueltschi, \textit{Loop correlations in random wire models}, arXiv:1807.06564 (2018). Accepted for publication on Commun. Math. Phys.

\bibitem{Biskup} M. Biskup, \textit{Reflection Positivity and Phase Transitions in Lattice Spin Models} in Methods of Contemporary Mathematical Statistical Physics, Lecture Notes in Mathematics Vol. 1970 1-86 (2009)

\bibitem{Brydges1} D. Brydges, J. Fr\"ohlich and T. Spencer, \textit{The random walk representation of classical spin systems and correlation inequalities} Comm. Math. Phys 83 no. 1, 123-150 (1982)

%\bibitem{Brydges2} D. Brydges, J. Fr\"ohlich and A. Sokal, \textit{The random walk representation of classical spin systems and correlation inequalities. II. The skeleton inequalities}, Comm. Math. Phys. 91 no. 1, 117-139 (1983)

\bibitem{Chayes} L. Chayes, L.P. Pryadko and K. Schtengel, \textit{Intersecting loop models on $\Z^d$: rigorous results}, Nuc. Phys. B 570(3) 590-614 (2000)



\bibitem{DC1}
H. Duminil-Copin,
R. Peled, W. Samotij and Y. Spinka,\textit{
Exponential decay of loop lengths in the loop 
$O(n)$ model with large  $n$},
Comm. Math. Phys, 
 349(3), 777- 817, (2017)


\bibitem{DC2}
H. Duminil-Copin,
R. Peled, A. Glazman and Y. Spinka,
\textit{Macroscopic loops in the loop 
O(n) model at Nienhuis' critical point},
arXiv: 1707.09335, (2018)


\bibitem{DLS} F. J. Dyson, E. H. Lieb and B. Simon, \textit{Phase Transitions in Quantum Spin Systems with Isotropic and Nonisotropic Interactions},
J. Stat. Phys., 18, 335-383, (1978)



\bibitem{Estelle}
B. Estelle and B. Pavel,
\textit{Exact Solution of the Classical Dimer Model on a Triangular Lattice: Monomer-Monomer Correlations},
Communications in Mathematical Physics, Volume 356, Issue 2, pp.397-425 (2017)

\bibitem{Feynman}
R. Feynman, \textit{Atomic theory of the $\lambda$ transition in Helium},
Phys. Rev. 91, 1291-1301 (1953)

\bibitem{FrohlichSimonSpencer} J. Fr\"ohlich, B. Simon and T. Spencer, 
\textit{Infrared bounds, phase transitions and continuous symmetry breaking}, Comm. Math. Phys., 50 79-95 (1976)


\bibitem{FriedliVelenik}
S. Friedli and Y. Velenik,
\textit{Statistical Mechanics of Lattice Systems:
a Concrete Mathematical Introduction}
Cambridge: Cambridge University Press, 2017.
ISBN: 978-1-107-18482-4
DOI: 10.1017/9781316882603


\bibitem{Ginibre} J. Ginibre, \textit{General formulation of Griffiths' inequalities}, Comm. Math. Phys., 16(4) 310-328 (1970) 




\bibitem{GlazmanManolescu1}
A. Glazman and I. Manolescu,
\textit{Uniform Lipschitz functions on the triangular lattice have logarithmic variations},
arXiv: 1810.05592
(2018)


\bibitem{GlazmanManolescu2}
A. Glazman and I. Manolescu,
\textit{Exponential decay in the loop $O(n)$ model:
$n>1$, $x < \sqrt{1/ 3} + \epsilon(n)$},
arXiv: 1810.11302 (2018)


\bibitem{Hegerfeldt}
G. C. Hegerfeldt,
\textit{Correlation Inequalities for Ising Ferromagnets with Symmetries}
Comm. Math. Phys. 57, 259-266 (1977)



\bibitem{Lebowitz}
J. L. Lebowitz,
\textit{GHS and other Inequalities},
Comm. Math. Phys,
35:87 (1074)

\bibitem{Kasteleyn}
P.W. Kasteleyn, \textit{The statistics of dimers on a lattice. The number of dimer arrangements on a quadratic lattice}, Physica, 27:1209-1225, (1961)


\bibitem{Griffiths} R.B. Griffiths, \textit{Correlations in Ising ferromagnets}, J. Math. Phys., 8, 478 (1967)

\bibitem{KenyonC}
C. Kenyon, D. Randall, A. Sinclair:
\textit{Approximating the number of Monomer-Dimer Coverings of a Lattice.}
Journal of Statistical Physics 83, 
637-659, (1996) .

\bibitem{Kenyon} 
R. Kenyon,
\textit{Conformal Invariance of Loops in the Double-Dimer Model,}
Comm. Math. Phys, Volume 326, Issue 2, pp 477–497 (2014)


\bibitem{Kenyon2}
R. Kenyon,
\textit{An introduction to the dimer model},
Lecture notes from a minicourse given at the ICTP in May 2002, arXiv 0310326


\bibitem{Kenyon3}
R. Kenyon and S. Sheffield,
\textit{Dimers, tilings and trees},
J. Combin. Theory Ser. B,
92(2):295-317, (2004)


%\bibitem{Mermin-Wagner}
%N. D. Mermin and H. Wagner,
%\textit{Absence of Ferromagnetism or Antiferromagnetism in %One- or Two-Dimensional Isotropic Heisenberg Models},
%Phys. Rev. Lett. 17, 1133 (1966)


\bibitem{MMS} A. Messager and S. Miracle-Sole, \textit{Correlation functions and boundary conditions in the Ising ferromagnet}, J. Stat. Phys. 17(4) 245-262 (1977)


%\bibitem{MMP}
%A. Messager, S. Miracle-Sole and C. Pfister,
%\textit{Correlation inequalities and uniqueness of the %equilibrium state for the plane rotator ferromagnetic %model},
%Comm. Math. Phys.,
%Volume 58, Number 1 (1978), 19-29


\bibitem{PeledSpinka}
R. Peled and Y. Spinka,
\textit{Lectures on the Spin and Loop 
O(n) Models},
arXiv: 1708.00058 (2017)

\bibitem{Peierls}R. E. Peierls,
\textit{On Ising's ferromagnet model}, Proc. Camb. Phil. Soc., 32:477– 481 (1936).


\bibitem{MadrasSlade}
N. Madras, G. Slade:
The Self-avoiding walk.
Modern Birk\"auser Classics, Springer, 1996 Edition.

\bibitem{Symanzik} K. Symanzik, \textit{Euclidean quantum field theory. I. Equations for a scalar model}, J. Math. Phys, 7 510-525 (1966)

\bibitem{Temperley}
H. N. V. Temperley and M. E. Fisher,
\textit{Dimer problem in statistical mechanics-an exact result},
Phylosophical Magazine, 6(68):1061-1063, (1961)



\bibitem{Taggi}
L. Taggi,
\textit{Shifted critical threshold in the loop $O(n)$ model at arbitrarily small $n$},
Electr. Comm. Probab. 2018, Vol. 23, paper no. 96, 1-9

\bibitem{Ueltschi}
D. Ueltschi
\textit{Relation between {F}eynman Cycles and Off-Diagonal Long-Range Order}
Phys. Rev. Lett. 97, 170601 (2006)

%\bibitem{Ueltschi2}

\end{thebibliography}
\end{document}